\newcommand{\otoprule}{\midrule[\heavyrulewidth]}
\theoremstyle{plain}
\newtheorem{theorem}{Theorem}[section]
\newtheorem{cor}[theorem]{Corollary}
\newtheorem{conj}[theorem]{Conjecture}
\newtheorem{lemma}[theorem]{Lemma}
\newtheorem{addendum}[theorem]{Addendum}
\newtheorem{question}[theorem]{Question}
\newtheorem{dfn}[theorem]{Definition}
\newtheorem*{theoremfillinglengths}{Theorem~\ref{thm:fillinglengths}}
\newtheorem*{theoremjpequivalence}{Theorem~\ref{thm:jpequivalence}}
\newtheorem*{claim*}{Claim}
\theoremstyle{definition}
\newtheorem{remark}[theorem]{Remark}
\newcommand{\hatQ}{\ensuremath{{\widehat{\Q}}}}
\newcommand{\hatZ}{\ensuremath{{\widehat{\Z}}}}
\newcommand{\calB}{\ensuremath{{\mathcal B}}}
\newcommand{\calL}{\ensuremath{{\mathcal L}}}
\newcommand{\bdry}{\ensuremath{\partial}}
\DeclareMathOperator{\Isom}{Isom}
\newcommand{\nbhd}{\ensuremath{\mathcal{N}}}
\newcommand{\Q}{\ensuremath{\mathbb{Q}}}
\newcommand{\R}{\ensuremath{\mathbb{R}}}
\newcommand{\Z}{\ensuremath{\mathbb{Z}}}
\newcommand{\mobius}{M\"{o}bius }
\title{Jointly primitive knots and surgeries between lens spaces.}
\author{Kenneth L.\ Baker, Neil R.\ Hoffman, Joan E.\ Licata  }
\address{Department of Mathematics, University of Miami, 
Coral Gables, FL 33146, USA}
\email{k.baker@math.miami.edu}
\address{Department of Mathematics, Oklahoma State University,
Stillwater, OK 74074}
\email{neil.r.hoffman@okstate.edu}
\address{Mathematical Sciences Institute, The Australian National University,  Australia}
\email{joan.licata@anu.edu.au}
\begin{document}

\begin{abstract}

This paper describes a Dehn surgery approach to generating asymmetric hyperbolic manifolds with two distinct lens space fillings.  Such manifolds were first identified 
 in \cite{DHL} as the result of a computer search of the SnapPy census, but the current work establishes a topological framework for constructing vastly many more such examples.  We introduce the notion of a \textit{jointly primitive} presentation of a knot and show that a refined version of this condition  ---\textit{longitudinally jointly primitive}--- is equivalent to being surgery dual to a $(1,2)$--knot in a lens space. 
 This generalizes  Berge's equivalence between having a doubly primitive presentation and being surgery dual to a $(1,1)$--knot in a lens space. 
Through surgery descriptions on a  seven-component link in $S^3$,
we provide several explicit multi-parameter infinite families of knots in lens spaces with longitudinal jointly primitive presentations and observe among them all the examples previously seen in \cite{DHL}.  
\end{abstract}

\maketitle
\tableofcontents

\section{Introduction}
In \cite{DHL}, Dunfield-Hoffman-Licata identified 22 manifolds with Heegaard genus $3$ and a pair of lens space fillings.  These examples --- the {\em DHL manifolds} --- were identified via the SnapPy census of hyperbolic manifolds admitting geometric triangulations of 10 or fewer tetrahedra.   Specifically, they searched for $1$--cusped, asymmetric manifolds with the property that two of the shortest slopes in the Euclidean torus cross-section of the cusp provided fillings whose fundamental groups had a single generator \cite{DHL}. Remarkably, these DHL manifolds give the first examples of two phenomena in the subject of Dehn surgery:
\begin{enumerate} 
\item They exhibit the ``genus drop'' phenomenon: a decrease in Heegaard genus  by at least 2 for more than one Dehn filling.
\item They give counterexamples to a ``generalized Berge conjecture'': a pair of surgery dual hyperbolic knots in lens spaces of which neither is a $(1,1)$--knot.
\end{enumerate}
This article arose from the attempt to better understand these phenomena by providing a unifying framework for these sporadic examples found by a computer search. 
What emerged from this effort is a characterization of knots that are framed surgery dual to $(1,2)$--knots in lens spaces,  some constructions of families of such knots in lens spaces that are generically hyperbolic and asymmetric, and an identification of the DHL manifolds among the complements of such knots. 

More precisely, we define a {\em longitudinally jointly primitive presentation} of a framed knot $K$ in Section \ref{sec:jpp} and show the following:
\begin{theoremjpequivalence}
A framed knot $K$ has a longitudinal jointly primitive presentation if and only if its framed surgery dual $K^*$ is a $(1,2)$--knot in a lens space. 
\end{theoremjpequivalence}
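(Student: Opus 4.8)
The plan is to prove both implications by realizing $K$ and $K^*$ as the two dual cores of a single exterior $M$, where $M = Y \setminus N(K) = L \setminus N(K^*)$, and by translating the combinatorial data attached to each core through the surgery. The model is Berge's theorem: there, a doubly primitive $K$ lies on a genus two Heegaard surface $\Sigma = \partial A = \partial B$ of $S^3$, the surface slope surgery caps $\Sigma$ off along $K$ to a genus one Heegaard torus of the resulting lens space, primitivity on each side guarantees that the capped handlebodies are solid tori, and the core of the surgery solid torus is seen to meet this torus in a single trivial arc on each side, i.e.\ as a $(1,1)$--knot. I would set up the parallel dictionary in which a longitudinal jointly primitive presentation records $K$ on a Heegaard surface of $Y$ together with the disk system witnessing joint primitivity (as defined in Section~\ref{sec:jpp}), while a $(1,2)$--presentation records $K^*$ in $2$--bridge position with respect to the genus one Heegaard torus $T = \partial V_1 = \partial V_2$ of $L$, namely as two trivial arcs in each $V_i$ equipped with their bridge disks. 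The relevant genus is pinned down by the bridge count: the exterior of a $(1,2)$--knot carries a Heegaard surface of genus $1 + 2 = 3$, matching the Heegaard genus $3$ of the DHL manifolds.

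For the forward implication, I would begin with a longitudinal jointly primitive presentation and perform the prescribed longitudinal surgery explicitly. The role of the joint primitivity condition is to guarantee that the surgery produces a genus one Heegaard torus of $L$ for which the core of the surgery solid torus is in bridge position; the heart of the argument is to verify that this core meets the torus in exactly two arcs on each side and that each arc is boundary--parallel, so that $K^*$ is genuinely a $(1,2)$--knot rather than a knot of higher bridge number or one sitting against a stabilized surface. I expect this to reduce to a handle--slide and isotopy argument that produces the four bridge disks as duals to the joint--primitivity disks, together with a check that the longitudinal framing is precisely the slope for which the construction closes up to a lens space.

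For the converse, I would start with $K^*$ in $2$--bridge position and run the construction backwards. Drilling $K^*$ punctures each solid torus $V_i$ along its two trivial arcs; the four bridge disks, together with $T$, assemble into a genus three Heegaard surface of the exterior $M$, and filling the dual longitudinal slope closes this surface up to a Heegaard surface of $Y$ carrying $K$. The bridge disks of the trivial arcs become exactly the disks witnessing joint primitivity on the two sides, and the meridian of $K^*$ fixes the longitudinal framing of $K$. Because the two constructions are inverse to one another by design, establishing that each is well defined yields the equivalence.

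The principal obstacle, present in both directions, is the ``joint'' nature of the condition. In the doubly primitive case a single primitive curve controls each side through one meridian disk, whereas here two trivial arcs per side must be matched with the joint--primitivity disk system, and one must show that this correspondence is an equivalence rather than a one--way implication: the joint condition has to force the arcs to be genuinely trivial and exactly two in number, with no hidden knotting, linking, or extraneous genus, and it must do so compatibly with the longitudinal framing that distinguishes the refined notion. Getting this matching exact --- so that no spurious $(1,1)$-- or higher--bridge configurations slip in and the framings on the two sides agree --- is where I expect the real difficulty to lie.
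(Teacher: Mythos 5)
Your high-level strategy --- realize $K$ and $K^*$ as dual cores in a common exterior and match primitivity disks with bridge disks --- is indeed the skeleton of the paper's argument, but the proposal rests on a misreading of what a longitudinally jointly primitive presentation is, and this leaves the essential step of each direction unaddressed. In the paper's definition, $\Sigma$ is not a Heegaard surface of $Y$: it is a twice-punctured torus properly embedded in the exterior $M$ of a two-component link $C_0\cup C_1$, with $H=M\setminus\nbhd(\Sigma)$ a genus $3$ handlebody, and ``longitudinal'' refers to the condition that $\bdry\Sigma$ meets $\bdry\nbhd(C_1)$ in a longitude of $C_1$ --- it is not a framing condition on $K$ (the framing of $K$ is always the surface framing from $\Sigma$). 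Consequently, in the forward direction the surgered manifold splits as $V\cup V'$ with $V=H[K_+,K_-]$ and $V'=H'[K'_+,K'_-]$, where $H'=\nbhd(C_0\cup C_1)\cup\nbhd(\Sigma')$ is a second genus $3$ handlebody. Joint primitivity handles the $V$ side exactly as you describe, but the crux is the $V'$ side: one must produce meridian disks of $H'$ dual to the cocore arcs, and these are obtained by taking an arc $a\subset\Sigma$ crossing $K$ once with endpoints on $\sigma_1=\bdry\Sigma\cap\bdry\nbhd(C_1)$ and extending the resulting disks through $\nbhd(C_1)$ --- which is possible precisely because $\sigma_1$ is a longitude. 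This is where the longitudinal hypothesis is consumed, and it is absent from your outline; without it you cannot conclude that $V'$ is a solid torus, hence cannot conclude that $Y^*$ is a lens space at all.

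The converse has a parallel gap. One must construct the link $C_0\cup C_1$ from the $(1,2)$--position: $C_0$ is a core of $V$ threaded through the two meridian disks containing the trivial arcs, positioned to the left of one arc and to the right of the other, and $C_1$ is a cable of $C_0$ on $\bdry V$ chosen so that the spanning annulus $A$ meets $K^*$ in two points of opposite sign. That orientation bookkeeping is what makes the algebraic intersection of $K^*$ with $A$ zero, so that tubing $A$ along an arc of $K^*\setminus A$ yields a twice-punctured \emph{torus} rather than a Klein bottle (the paper's Remark~\ref{rem:2bridge} treats the other case separately). Your proposal instead assembles the bridge disks and $T$ into ``a genus three Heegaard surface of the exterior,'' which produces neither the link $C_0\cup C_1$ nor the surface $\Sigma$, and in particular never verifies the longitudinal condition on $C_1$ that the forward direction requires. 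Note also that only the bridge disks in $V'$ become joint-primitivity disks; the bridge disks in $V$ serve the entirely different purpose of positioning $C_0$ and $C_1$.
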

\noindent
Thereafter we develop several families of framed knots in lens spaces with longitudinally jointly primitive presentations (detailed in Theorem~\ref{thm:bulk}, Theorem~\ref{thm:bulk2}, and Addendum~\ref{add:t12685}) among whose complements we locate the DHL manifolds as detailed in Table~\ref{table:DHL22}.
\begin{cor}\label{cor:dhl}
The 22 DHL manifolds can be realized as complements of $(1,2)$--knots in lens spaces. 
\end{cor}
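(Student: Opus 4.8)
The plan is to obtain the corollary as a direct consequence of Theorem~\ref{thm:jpequivalence} together with the explicit families of Theorem~\ref{thm:bulk}, Theorem~\ref{thm:bulk2}, and Addendum~\ref{add:t12685}, reducing the entire statement to the identifications recorded in Table~\ref{table:DHL22}. First I would recall that each of the 22 DHL manifolds is a one-cusped hyperbolic manifold carrying two lens space fillings \cite{DHL}; choosing such a filling slope already presents the manifold as the exterior of some knot in a lens space. What remains is to arrange that one of these knots is a $(1,2)$--knot.

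To this end I would turn to the knots $K$ produced by the three families. Each such $K$ is a framed knot in a lens space equipped with a longitudinal jointly primitive presentation, described concretely via surgery on the seven-component link in $S^3$. By Theorem~\ref{thm:jpequivalence}, the framed surgery dual $K^*$ of each such $K$ is a $(1,2)$--knot in a lens space. Since a framed knot and its surgery dual have homeomorphic exteriors, the complement of $K^*$ coincides with the complement of $K$. Hence every manifold appearing as the exterior of a knot in one of these families is automatically the exterior of a $(1,2)$--knot.

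It therefore suffices to exhibit each DHL manifold as one of these exteriors, and this is precisely the content of Table~\ref{table:DHL22}. I expect this matching to be the main obstacle. Concretely, for each DHL manifold I would specialize the parameters of the appropriate family, compute the resulting knot exterior from its surgery description, and verify that it is isometric to the census triangulation of the target DHL manifold while confirming agreement of the relevant lens space filling slope. Because the DHL manifolds are hyperbolic---and hence rigid---and moreover asymmetric, any such isometry is unique once located, so the identification is unambiguous; the practical difficulty is the bookkeeping of simplifying and comparing the triangulations, for which I would rely on a rigorous (verified) computation.

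Combining these steps yields the claim: each of the 22 DHL manifolds is the exterior of a knot $K$ drawn from one of the constructed families, hence equally the exterior of its framed surgery dual $K^*$, which Theorem~\ref{thm:jpequivalence} guarantees is a $(1,2)$--knot in a lens space.
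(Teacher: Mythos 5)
Your proposal is correct and follows essentially the same route as the paper: the paper's proof likewise reads the parameters off Table~\ref{table:DHL22}, invokes Theorem~\ref{thm:bulk}, Theorem~\ref{thm:bulk2}, or Addendum~\ref{add:t12685} to see that the corresponding knots are LJP, and then (implicitly) applies Theorem~\ref{thm:jpequivalence} together with the fact that a framed knot and its surgery dual share an exterior. Your additional remarks on verifying the census identifications computationally are exactly the bookkeeping the paper delegates to the table.
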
 

For context, recall the surgery characterization of Berge's doubly primitive property for framed knots and the Berge Conjecture.
\begin{theorem}[{\cite{berge2018some}, see also e.g.\ \cite[Appendix]{saito} and \cite[Lemma 3]{BakerDoleshalHoffman}}]
A framed knot $K$ has a doubly primitive presentation if and only if its framed surgery dual $K^*$ is a $(1,1)$--knot in a lens space. \qed
\end{theorem}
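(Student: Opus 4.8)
The plan is to route everything through the common exterior $X = E(K) = S^3 \setminus N(K)$, which is simultaneously the exterior $M \setminus N(K^*)$ of the surgery dual and carries on its boundary torus the two distinguished slopes $\mu_K$ (the meridian of $K$, filling to $S^3$) and $f = \mu_{K^*}$ (the surface/surgery slope, filling to $M$). Since the forward and backward constructions are mutually inverse cut-and-reglue operations, I would prove the two implications in parallel, carrying out $(\Rightarrow)$ carefully and then reading it backwards. The one structural input I would isolate first is the standard handlebody lemma: a simple closed curve $c$ on $\partial H$ is primitive in $\pi_1(H)$ if and only if attaching a $2$--handle to $H$ along $c$ yields a handlebody of one lower genus, equivalently there is a meridian disk of $H$ meeting $c$ in a single geometric point.

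For $(\Rightarrow)$, suppose $K \subset \Sigma = \partial H_1 = \partial H_2$ is doubly primitive and perform surface-framed surgery, writing $V$ for the filling solid torus with core $K^*$. Because the surgery slope is the surface slope, the two boundary curves of $\Sigma \cap X = \Sigma \setminus N(K)$ are parallel copies of $f$, hence meridians of $V$; capping them with two meridian disks $D, D'$ of $V$ produces a closed surface $T = (\Sigma \setminus N(K)) \cup D \cup D'$ whose Euler characteristic exceeds that of $\Sigma$ by $2$, so $T$ is a torus. I would then identify what $T$ bounds: the disks $D, D'$ cut $V$ into two balls, and the ball on the $H_i$ side is glued to $H_i \setminus N(K) \cong H_i$ along an annular neighborhood of $K$, i.e.\ it is a $2$--handle attached along $K$. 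By the handlebody lemma and primitivity of $K$ in $H_i$, each side $P_i = H_i \cup (\text{$2$--handle along } K)$ is a solid torus. Thus $M = P_1 \cup_T P_2$ is a genus-one Heegaard splitting, so $M$ is a lens space; and $K^*$, the core of $V$, meets each $P_i$ in a single core arc of the attached handle, which is boundary-parallel, so $K^*$ is a $(1,1)$--knot.

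For $(\Leftarrow)$ I would run this in reverse. Given $K^*$ in $(1,1)$--position, $M = V_1 \cup_T V_2$ with $K^* \cap V_i$ a trivial arc $\tau_i$ cobounding a disk $E_i$ with an arc of $T$, drilling $K^*$ turns each solid torus into a genus-two handlebody $W_i = V_i \setminus N(\tau_i)$ and exhibits a genus-two Heegaard splitting of $X$. Refilling along $\mu_K$ to recover $S^3$ caps this splitting off to a genus-two Heegaard surface $\Sigma$ of $S^3$ containing $K$ (the core of the refilling torus), and each disk $E_i$ becomes a meridian disk of the resulting handlebody $H_i$ meeting $K$ exactly once; by the handlebody lemma $K$ is primitive on both sides, and the surface slope is exactly $f = \mu_{K^*}$, so the presentation is doubly primitive with the prescribed framing.

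The main obstacle, and where I would spend the most care, is bookkeeping slopes and intersections rather than any single hard step. Concretely: (i) upgrading primitivity to a meridian disk meeting $K$ in exactly one \emph{geometric} point, and conversely recognizing $E_i$ as precisely such a disk, since algebraic primitivity must be promoted to a clean geometric picture; (ii) verifying that the surface slope really is the meridian of $V$ and that it matches $f$ under the duality $\mu_{K^*} = f$, so that the two constructions are genuinely inverse; and (iii) checking that the capped-off torus $T$ is connected (primitivity forces $K$ to be non-separating on $\Sigma$, since a separating curve is null-homologous) and is a genuine Heegaard torus, so that the genus really drops from two to one rather than merely producing a compressible surface.
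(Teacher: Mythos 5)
The paper does not actually prove this statement---it is quoted from the literature with a \qed{} and citations to Berge, Saito, and Baker--Doleshal--Hoffman---so there is no in-paper proof to measure you against. Your argument is the standard one, and it coincides step for step with the strategy the paper does carry out in detail for the $(1,2)$ generalization in Theorem~\ref{thm:jpequivalence}: the forward direction realizes the framed surgery as attaching $2$--handles along the impression(s) of $K$ in the complementary handlebodies, uses primitivity to conclude each side becomes a solid torus, and identifies $K^*$ with the boundary-parallel cocore arcs; the reverse direction drills the trivial arcs, certifies primitivity of the impression curves via the bridge disks, and tubes the splitting torus along $K^*$ to recover the Heegaard surface carrying $K$ with the surface slope equal to $\mu_{K^*}$. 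Your three flagged points of care are exactly the right ones, and they all go through. The only slip is that you have silently specialized the ambient manifold of $K$ to $S^3$ (writing $X=S^3\setminus N(K)$ and ``refilling along $\mu_K$ to recover $S^3$''): the statement, as Berge formulates it and as the paper uses it (e.g.\ in Corollary~\ref{cor:doublyprimitive}, where the doubly primitive knot lives in an arbitrary closed $3$--manifold $Y$), makes no such assumption, and in the reverse direction the integrally refilled manifold is whatever closed $Y$ the given framing produces, not $S^3$. Nothing in your argument actually uses $S^3$, so this is a matter of wording rather than a gap, but the proof should be phrased for a framed knot in a general closed $3$--manifold.
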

\begin{conj}[The Berge Conjecture \cite{berge2018some, gordon1998dehn}]
	Any framed knot in $S^3$ with a surgery to a lens space is doubly primitive.
\end{conj}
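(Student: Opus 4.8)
The plan is to reformulate the conjecture through the surgery characterization just stated: since having a doubly primitive presentation is equivalent to the framed surgery dual being a $(1,1)$--knot in a lens space, it suffices to show that whenever a framed knot $K \subset S^3$ admits a surgery to a lens space $L$, the dual knot $K^* \subset L$ admits a genus-one $(1,1)$--decomposition with respect to the standard genus-one Heegaard splitting of $L$. Equivalently, one wants to produce a Heegaard torus $T$ for $L$ meeting $K^*$ in a single bridge on each side. The complement of $K^*$ is the exterior of $K$, a knot complement in $S^3$, so all of the topology of $K$ is available, and the real task is to promote structure on that exterior into a $(1,1)$--position for $K^*$.

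First I would trim the class of surgery slopes. By the Cyclic Surgery Theorem, a nontrivial knot with a cyclic (in particular lens space) surgery either is a torus knot or the surgery slope is integral; torus knots and the relevant cables are already known to be doubly primitive, so I would restrict to integral slopes on hyperbolic knots. Next I would invoke the Floer-homological constraints: a knot admitting a lens space (L-space) surgery is fibered and has knot Floer homology supported in the thin staircase form dictated by its Alexander polynomial, and Greene's solution of the lens space realization problem shows that the resulting lens space always appears on Berge's list. This pins down the ambient $L$ and forces the candidate $(1,1)$--parameters, so that one knows precisely which $(1,1)$--decomposition one is trying to build.

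The hard part --- and the reason this remains a conjecture --- is bridging the gap between ``the lens space lies on Berge's list'' and ``the knot is the corresponding Berge knot.'' Floer homology constrains the surgered manifold and the Alexander polynomial of $K$, but distinct knots can share these invariants, so the invariants alone do not manufacture the geometric $(1,1)$--splitting. To close this I would attempt a thin-position and sutured-manifold argument in the spirit of Gabai and Scharlemann: place $K^*$ in thin position relative to the Heegaard torus of $L$, then use irreducibility of the exterior together with the tunnel structure forced by an integral lens space filling to cancel all but one maximum and one minimum, extracting a $(1,1)$--decomposition directly. Controlling this thin position is exactly the obstacle: there is presently no mechanism that rules out a $K^*$ of genuinely higher bridge number realizing the same lens space surgery, and supplying such a mechanism would constitute the essential content of any proof.
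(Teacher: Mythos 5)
The statement you were asked to prove is stated in the paper as a \emph{conjecture} --- the Berge Conjecture --- and the paper offers no proof of it; indeed it remains an open problem, and the paper's main point in citing it is that its natural generalization to knots in lens spaces \emph{fails} (the DHL manifolds are counterexamples). So there is no proof in the paper to compare yours against, and your proposal does not close the gap either. Your own final paragraph concedes as much: after the reductions you describe, you write that ``there is presently no mechanism'' to rule out a dual knot of higher genus-one bridge number, and that supplying one ``would constitute the essential content of any proof.'' That sentence is the gap. A proof attempt that ends by identifying the missing step is a (reasonable) survey of the state of the art, not a proof.

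To be concrete about where the argument stops: the reduction via the Cyclic Surgery Theorem to integral slopes on hyperbolic knots is fine, and the Ozsv\'ath--Szab\'o fiberedness and staircase constraints together with Greene's realization theorem do show that the resulting lens space $L(p,q)$ and the homology class of the dual knot $K^*$ agree with those of some Berge knot, and even that $K$ and that Berge knot have the same knot Floer homology. But none of this forces $K^*$ to admit a $(1,1)$--decomposition, because these invariants do not distinguish a genuine Berge dual from a hypothetical knot of genus-one bridge number $\geq 2$ realizing the same surgery --- and the present paper's Theorem~\ref{thm:jpequivalence} together with its examples shows that surgery duals of genus-one bridge number exactly $2$ (i.e.\ $(1,2)$--knots that are not $(1,1)$--knots) genuinely occur once one leaves $S^3$. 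The proposed thin-position/sutured-manifold cancellation of extrema is not carried out, and no currently known argument carries it out; this is precisely why the statement is a conjecture rather than a theorem.
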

\noindent 
The Berge Conjecture and its generalization to knots in $S^1 \times S^2$ (see \cite{BBLS1xS2} and \cite[Conjecture 1.9]{Greene2013Realization}) may still hold, but the DHL manifolds show that the generalization to knots in lens spaces with lens space surgeries fails.

\bigskip

To describe our families of knots with longitudinally jointly primitive presentations, 
in Section~\ref{sec:families} we introduce the knots $K_k(m,r,s,b)$ in the manifolds $Y(m,r,s,b)$; these are obtained as the image of the knot $K$ after surgery on the link $\calL$ shown in Figure~\ref{fig:surgerylink}.  The components are named on the left  of Figure~\ref{fig:surgerylink}, and the surgery slopes are given in terms of the parameters $m,r,s,b,k$  on the right.  Figure~\ref{fig:masterlinksurface} shows the link $K \cup \calL$ again, but with a twice-punctured torus inducing a $+1$ framing. Performing this framed $(+1)$--surgery on $K$ in addition to the surgery on $\calL$ produces the manifold $Y^*_k(m,r,s,b)$.  For certain constraints on $m,r,s,b$, both $Y(m,r,s,b)$ and $Y^*_k(m,r,s,b)$ are lens spaces for all $k \in \Z$.
Table~\ref{table:DHL22} presents the 22 DHL
manifolds as the exteriors of the knots $K_k(m,r,s,b)$ along with their lens space fillings  $Y(m,r,s,b)$ and $Y^*_k(m,r,s,b)$, proving Corollary~\ref{cor:dhl}.

For our constructed families of knots to be ``interesting'' ---beyond simply accounting for the DHL manifolds--- we need to know that, generically, neither they nor their surgery duals are $(1,1)$--knots.  
As $(1,1)$--knots are strongly invertible 
and the symmetry group of a hyperbolic manifold is its isometry group, it suffices to show that the knots in our families are ``generically'' hyperbolic and without a strong inversion in their isometry group.   Since our knots arise through parameterized surgery descriptions, we must take care in developing the proper sense of ``generic''.  To that end we develop the following definitions and theorem (building upon \cite{hodgsonKerckhoff2008} and \cite{kojima}).

Let the multislope $\psi = (\psi_i)_{i=1}^m$ denote slopes on the cusps of $M$ and let $\Psi_N$ be the set of multislopes whose constituent slopes all have lengths greater than a constant $N$; see \S\ref{sec:isometrygroupsfillings} for details.   

\begin{theoremfillinglengths}
	Let $M$ be an $m$--cusped orientable hyperbolic $3$--manifold. Set $C= 7.5832$.
	\begin{enumerate}
		\item For each $\psi \in \Psi_{C}$, $M(\psi)$ is a hyperbolic $3$--manifold in which the cores of the filling are mutually disjoint simple closed geodesics.
		\item Furthermore, there is a constant $C_M>C$ and depending on $M$ such that for each $\psi \in \Psi_{C_M}$, the cores of the filling constitute the shortest geodesics in $M(\psi)$; in this case Kojima's restriction map $\text{rest} \colon \Isom(M(\psi)) \hookrightarrow \Isom(M)$ is a monomorphism.
	\end{enumerate}
\end{theoremfillinglengths}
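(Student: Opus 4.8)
My plan is to derive part (1) directly from the Hodgson--Kerckhoff cone-deformation theory and to concentrate the real work on the length comparison needed for part (2). The constant $C = 7.5832$ is precisely the Hodgson--Kerckhoff threshold, so the hypothesis $\psi \in \Psi_C$ says exactly that each filling slope is long enough to run their deformation. Concretely, I would begin at the complete cusped structure on $M$, regard each filling curve $\psi_i$ as a cone locus of cone angle $0$, and use \cite{hodgsonKerckhoff2008} to increase all the cone angles simultaneously to $2\pi$ without the structure degenerating. At cone angle $2\pi$ the resulting structure is the smooth complete hyperbolic metric on $M(\psi)$, and because the singular locus stays geodesic along the entire deformation, each core $\gamma_i$ is a closed geodesic of $M(\psi)$. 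The $\gamma_i$ are simple and pairwise disjoint because they are the core circles of the disjoint solid tori attached in the filling, which gives part (1). The one point requiring care here is that $M$ may have several cusps, so I must invoke the Hodgson--Kerckhoff estimates in the form that a \emph{per-slope} bound of $C$ licenses the simultaneous deformation, as set up in \S\ref{sec:isometrygroupsfillings}.

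For part (2) I would extract two quantitative outputs of the same deformation: an upper bound on each core length $\ell(\gamma_i)$ that tends to $0$, and a lower bound on the radius $R_i$ of an embedded tube about $\gamma_i$ that tends to $\infty$, both as the length of $\psi_i$ grows. I then want a constant $C_M > C$ for which, uniformly over all $\psi \in \Psi_{C_M}$, the cores are the \emph{strictly} shortest geodesics of $M(\psi)$. The mechanism is a dichotomy for any closed geodesic $\sigma \neq \gamma_i$: either $\sigma$ meets some tube, in which case the tube geometry forces $\ell(\sigma)$ to be bounded below by a quantity increasing in the radius $R_i$; or $\sigma$ avoids every tube and therefore lies in the region that the Hodgson--Kerckhoff estimates keep bi-Lipschitz close (with constant tending to $1$) to the thick part of $M$, whence $\ell(\sigma)$ is bounded below by essentially the systole of $M$. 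Choosing $C_M$ so that both of these lower bounds exceed the core-length upper bound makes every core strictly shorter than every other geodesic.

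The \textbf{main obstacle} is exactly this uniform length separation. Geometric convergence $M(\psi) \to M$ as the slopes grow handles the comparison along any single sequence of fillings, but the statement quantifies over the \emph{entire} family $\Psi_{C_M}$ at once; upgrading the sequential statement to a uniform one is what forces me to use the effective Hodgson--Kerckhoff tube-radius and length estimates rather than a soft limiting argument. Controlling the geodesics that dip into the tubes, and pinning down an explicit $C_M$ from the numerical bounds, is where the genuine bookkeeping lies.

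Granting that the cores are the shortest geodesics, the final claim is essentially formal. Every isometry $f \in \Isom(M(\psi))$ permutes the set of shortest geodesics, hence permutes the cores and preserves their union, so $f$ restricts to an isometry of $M(\psi) \setminus \bigcup_i \gamma_i$, the manifold underlying $M$; by Mostow--Prasad rigidity the complete finite-volume structure there is unique, so $f$ determines a well-defined element $\rest(f) \in \Isom(M)$ and $\rest$ is a homomorphism. Injectivity is analytic continuation of isometries: if $\rest(f) = \Id$ then $f$ is the identity on an open dense subset of $M(\psi)$, and hence $f = \Id$. This is Kojima's restriction map \cite{kojima}, completing part (2).
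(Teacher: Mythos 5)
Your part (1) is the same as the paper's: both rest on Hodgson--Kerckhoff \cite[Theorem 1.2]{hodgsonKerckhoff2008}, which is precisely the statement that every multislope in $\Psi_C$ lies in the hyperbolic Dehn surgery space, so there is nothing to add there. For part (2) you take a genuinely different, more quantitative route. The paper argues softly: it bounds the total core length via the volume estimate \cite[Theorem 5.12]{hodgsonKerckhoff2008} together with Neumann--Zagier, and then, using geometric convergence of the fillings to $M$, invokes the thick--thin decomposition (\cite[Theorem E.2.4]{BP}) to conclude that for all but finitely many multislopes the $\epsilon$--thin part of $M(\psi)$ is exactly the union of the core tubes and the unfilled cusp neighborhoods; since the core is the unique closed geodesic in its tube (\cite[Theorem D.3.11]{BP}), the cores are then the unique geodesics of length less than $\epsilon$. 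The uniformity you worry about --- passing from ``along any sequence'' to ``for all $\psi\in\Psi_{C_M}$'' --- is handled in the paper not by effective tube-radius estimates but by the observation that only finitely many multislopes (with a fixed set of filled cusps) fail to lie in $\Psi^J_N$, so a ``for all but finitely many'' statement is already a uniform one. Your dichotomy (deep penetration of a tube forces length comparable to the tube radius; avoidance of all tubes places the geodesic in a region bi-Lipschitz close to the thick part of $M$) can be made to work, but note that no explicit $C_M$ is required by the statement --- the paper explicitly disclaims any estimate on $C_M$ --- so the bookkeeping you defer is avoidable.

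Two concrete points need repair. First, $\Psi_N$ contains multislopes with empty components (the empty slope has infinite normalized length), and the paper's applications use such partial fillings essentially; your cone-deformation description and your length comparison tacitly assume every cusp is filled. The fix is the paper's partition of $\Psi$ by the subset $J$ of cusps actually filled, producing a constant $C^J_M$ for each nonempty $J$ and taking $C_M$ to be the maximum. Second, your dichotomy as stated is too coarse: a closed geodesic that merely grazes the boundary of a tube is not forced to be long by the tube geometry, so the split must be made at an intermediate radius (say, entering the half-radius neighborhood of the core versus avoiding it). Finally, your injectivity argument for $\rest$ is looser than Kojima's: $\rest(f)$ is an isometry of the complete structure on $M$, not literally the restriction of $f$ (the two metrics on the complement of the cores differ), so ``$\rest(f)=\Id$ implies $f$ is the identity on a dense set'' does not parse as written; since the paper simply cites \cite[Lemma 5]{kojima} for the monomorphism, it suffices to do the same.
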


Definition~\ref{dfn:symmetrybreak} establishes that a multislope $\psi$ for $M$ is {\em fully generic} if $\psi \in \Psi_{C_M}$ and  {\em symmetry-breaking} if $g(\psi) \neq \psi$ for all non-trivial isometries $g \in \Isom(M)$.

\medskip

In our main application of Theorem~\ref{thm:fillinglengths}, $M$ is one of a handful of hyperbolic manifolds obtained from a partial filling of the exterior of the link $K \cup \calL$.  These are chosen so that a further filling along any multislope in a particular collection $\mathcal{S}_M$ produces a one-cusped manifold with two lens space fillings that is the exterior of a subfamily of knots $K_k(m,r,s,b)$.   The isometry group of each $M$ is calculated with SnapPy.  From this, we conclude that the multislopes in $\mathcal{S}_M$ are symmetry-breaking for most of these $M$.  This is done in Section~\ref{sec:hypandsym}.  Hence, for most of these $M$, we can conclude that the fillings along the fully generic multislopes in $\mathcal{S}_M$ produce asymmetric, one-cusped hyperbolic manifolds with two lens space fillings. Thus the corresponding knots $K_k(m,r,s,b)$ and their surgery duals $K^*_k(m,r,s,b)$ cannot be $(1,1)$--knots.   These results are   the primary content of Theorem~\ref{thm:bulk}, Theorem~\ref{thm:bulk2}, and Addendum~\ref{add:t12685}.

Among the fillings of these $M$ with multislopes in $\mathcal{S}_M$, we locate the DHL manifolds.
However, as we have no estimate on an upper bound for $C_M$ in Theorem~\ref{thm:fillinglengths} and hence no explicit length bound that ensures a multislope is fully generic, our results only establish  the existence of infinitely many examples of asymmetric hyperbolic knots with two lens space fillings, together with finitely many specific, individually calculated, examples.  For instance,  Theorem~\ref{thm:bulk} shows that $K_k(-1,-1,-4+\tfrac1n, b)$ is a knot in a lens space with a lens space filling for all $n,b,k\in\Z$, while Lemma~\ref{lem:bulksymmetry} shows the knot is hyperbolic and asymmetric for $n,b,k\in \Z$ of suitably large magnitude.  However we cannot say how large is ``suitably large''.

Addendum~\ref{add:t12685} gives several $M$ with multislopes $\mathcal{S}_M$ producing one-cusped manifolds with two lens space fillings,  although the multislopes in $\mathcal{S}_M$ are not symmetry-breaking.  A similar argument allows us to conclude that a strong involution is the only non-trivial isometry  of the fillings along the fully generic multislopes in $\mathcal{S}_M$.  Thus other arguments are required to determine whether the corresponding knots $K_k(m,r,s,b)$ or their surgery duals $K^*_k(m,r,s,b)$ are $(1,1)$--knots.

\medskip
Theorem~\ref{thm:bulk} also   adjusts  the constraints on the parameters $m,r,s,b$ so that the knots $K_k(m,r,s,b)$ give surgeries between lens spaces and reducible manifolds or surgeries between two reducible manifolds.  Remark~\ref{rem:bulk2extension} discusses similar modifications to  Theorem~\ref{thm:bulk2}. Theorem~\ref{thm:fillinglengths} may be applied as described above to determine the isometry groups of the exteriors of these knots corresponding to totally generic fillings of the appropriate intermediary filling $M$.  In particular, while Theorem~\ref{thm:bulk} shows that for $b,k\in\Z$ both $K_k(-1,-1,-4,b)$ and $K_k(-1,-2,-3,b)$ are knots in a connected sum of lens spaces with a lens space surgery,  Lemma~\ref{lem:bulkreduciblesymmetry} shows that they are hyperbolic and asymmetric for $b,k\in\Z$ of suitably large magnitude.

\begin{figure}
\includegraphics[height=2.5in]{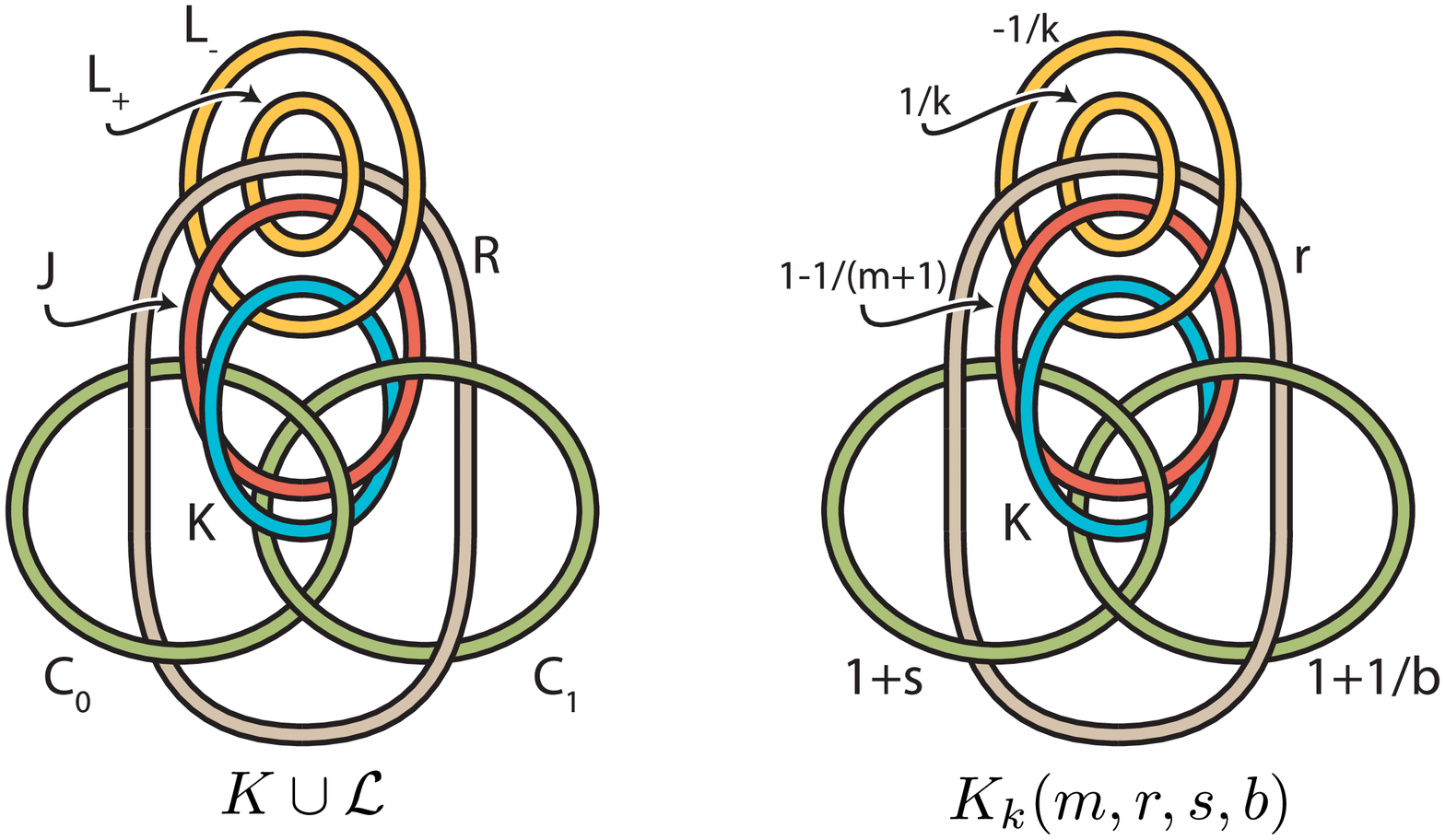}
\caption{ Left presents the link $\calL =  J \cup  R \cup C_0 \cup C_1\cup L_+ \cup L_-$ together with the knot $K$.  Right uses the link $K \cup \calL $ to give a Dehn surgery diagram for the knot $K_k(m,r,s,b)$.}  
\label{fig:surgerylink}
\end{figure}

\begin{figure}
	\includegraphics[height=3.5in]{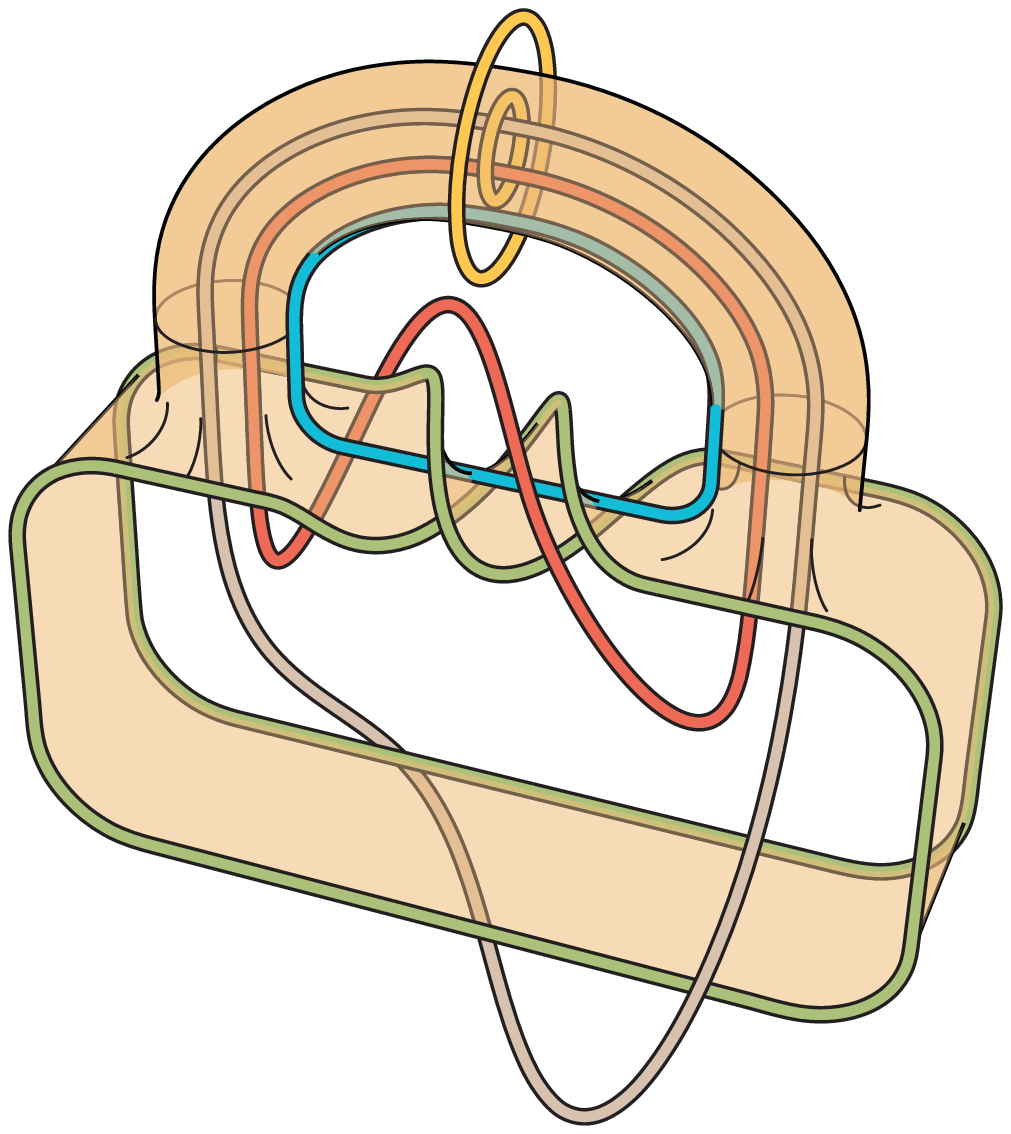}
	\caption{The link $K \cup \calL$ from Figure~\ref{fig:surgerylink} is redrawn here with the  twice-punctured torus Seifert surface $\Sigma$ for $C_0 \cup C_1$.  Observe that $K$ embeds in $\Sigma$ with framing $+1$, $L_+$ and $L_-$ are unknots that cobound an annulus with framing $0$ that intersects $\Sigma$ in a curve that $K$ crosses once, and the curves $J$ and $R$ are disjoint from both $\Sigma$ and this annulus.  }  
	\label{fig:masterlinksurface}
\end{figure}

\subsection{Overview}

Our families of examples are derived from families of strongly invertible, hyperbolic framed knots with a {\em jointly primitive presentations}  either in a cable space or in a Whitehead link exterior for which the strong inversion exchanges the two torus boundary components of the ambient manifold.  The jointly primitive presentation of the knot ensures that the framed surgery produces a cable space.  As cable spaces and Whitehead links have many lens space fillings, selecting such fillings in which the jointly primitive knot becomes a {\em longitudinally} jointly primitive knot ensures that the surgered manifold is a lens space. Further selecting such fillings so that any strong involution of the knot does not extend to the filled manifold then obstructs the knot in the lens space from being  strongly invertible.

Section~\ref{sec:jpp} introduces {\em jointly primitive presentations} of knots and the key Theorem~\ref{thm:jpequivalence} unfolds from this definition.   Section~\ref{sec:questions} then relates jointly primitive presentations  to  other significant surgery presentations and proposes some related questions; we  welcome  further discussion or results that address these.
Section~\ref{sect:symm_and_Dehn_surgery}, and particularly Theorem~\ref{thm:fillinglengths}, clarifies  the hyperbolic geometry of multi-cusped fillings that ensures a ``generic'' filling does not have any unexpected symmetries.  
Section~\ref{sec:families} develops several explicit families of knots in lens spaces exhibiting longitudinally jointly primitive presentations and checks their hyperbolicity and symmetries.  
These results are presented in Theorem~\ref{thm:bulk}, Theorem~\ref{thm:bulk2}, and Addendum~\ref{add:t12685} and represent the culmination of our  goal of placing the DHL manifolds in a natural framework.  With Theorem~\ref{thm:jpequivalence}, they give sense to the DHL manifolds as  recorded in Table~\ref{table:DHL22} and Corollary~\ref{cor:dhl}.

Appendix~\ref{sec:MMM_twice_drilled} notes that ``natural'' examples of hyperbolic manifolds that are not strongly invertible but yet have two lens space fillings  arise from a closer examination of \cite{mattman2006seifert}.  We identify these examples among our families.

We stress that the techniques outlined in this paper should give rise to many more asymmetric hyperbolic manifolds with two lens space fillings.  As a testament to this,  Appendix~\ref{sec:magic} develops more examples of such manifolds using these techniques with fillings of the ``Magic Manifold''.

\subsection{Proof of Corollary \ref{cor:dhl}}

We now summarize the parts of our discussion needed to prove Corollary \ref{cor:dhl}.

\begin{proof}[Proof of Corollary \ref{cor:dhl}]
As indicated in Table \ref{table:DHL22}, each of the 22 DHL manifolds can be obtained via surgery on the link in Figure \ref{fig:surgerylink}. It is now a simple matter of bookkeeping to see that we have shown that the 
relevant surgery parameters yield LJP knots. First, all rows of the table not decorated by  $\dagger$ or $\ddagger$ can be seen as LJP by Theorem \ref{thm:bulk}. 
Four of the remaining rows which are marked with a $\dagger$ are seen as LJP by Theorem \ref{thm:bulk2}. The final case is addressed by Addendum \ref{add:t12685}.
\end{proof}

 \subsection{Acknowledgements}
 
Parts of this work were greatly aided by experimental exploration. The authors wish to thank Marc Culler, Nathan Dunfield, and Matthias Goerner for maintaining SnapPy \cite{snappy},  Benjamin A. Burton and Ryan Budney and William Pettersson for maintaining Regina \cite{regina}, and Frank Swenton for his Kirby Calculator \cite{kirbycalculator}.

This project benefited from the hospitality enjoyed during visits to the University of Melbourne and the Mathematical Sciences Institute at the Australian National University. We thank them for providing this material support to facilitate our work and subsequent writing. KLB would also like to thank CIRGET and Universit\'e du Qu\'ebec \`a Montr\'eal for their support and hospitality during portions of the research and writing.

KLB was partially supported by grants from the Simons Foundation (\#209184 and \#523883 to Kenneth L.\ Baker). NRH was partially supported by grant from the Simons Foundation (\#524123 to Neil R. Hoffman). 

 \begin{table}
 \small
\caption{
	The $22$ DHL manifolds as the knot exteriors $X_k(m,r,s,b)$ and their lens space fillings.  The parameters producing these manifolds are indicated; a few may be obtained from two sets of parameters.
	When unmarked in the final column, the manifold is obtained via Theorem~\ref{thm:bulk}.  When marked with $\dagger$, it is obtained with Theorem~\ref{thm:bulk2}. When marked with $\ddagger$, it is obtained in Addendum~\ref{add:t12685}.}
\label{table:DHL22}
\begin{tabular}[t]{@{}lllrrrrrr@{}}
\toprule
      $X_k(m,r,s,b)$ &    $Y(m,r,s,b)$ &   $Y^*_k(m,r,s,b)$ & $m$ & $r$ & $s$ & $b$ & $k$ \\
\midrule
      $v3372$ &         $L(7, 1)$ &   $L(19, 7)$    &   -1  & -1  &  -3  &  1  &  -2 \\
       	    &                  $L(19, 7)$ & $L(7, 1)$ &  -1  & -2  &  -4  &  -2  &  1 \\
     $t10397$ &      $L(11, 2)$ &   $L(14, 3)$ &     -1  &   -1 & -3 & -2 & 2\\
     $t10448$ &        $L(17, 5)$ &   $L(29, 8)$   & -1  & -2  &  -4  &  1  &  1 \\
     $t11289$ &      $L(11, 2)$ &   $L(26, 7)$    &  -1  &  -1 & -3 & -2 & -2\\
 				&      $L(26, 7)$ &    $L(11, 2)$ &  -1  &  -2  &  -$\tfrac52$  &  -2  &  1 \\
    \midrule 
     $t11581$ &          $L(7, 1)$ &  $L(31, 12)$ &  -1  &   -1  &  -3  &  1  &  2 \\
     $t11780$ &    $L(6,1)$ & $L(23,7)$  &-2 & 0&-4 &1&-1 & $\dagger$\\
     $t11824$ &     $L(19, 4)$ & $L(34, 13)$ & -1  &   -2 & -$\tfrac52$ & 1 & 1  \\
     $t12685$ &          $L(14,3)$ & $L(29,8)$  &  -2&0&-3&-2 & -1  & $\ddagger$\\
     \midrule

 $o9_{34328}$ &      $L(13, 2)$ &  $L(34, 13)$ &  -1  &  -1 & -3 & 2 & -2 \\
						 	&  $L(34, 13)$ & $L(13, 2)$ &   -1  & -2 & -$\tfrac72$ & -2 & 1 \\
 $o9_{35609}$ &          $L(29, 8)$ & $L(50, 19)$ & -1  &  -2 & -$\tfrac72$ & 1 & 1 \\
 $o9_{35746}$ &        $L(17, 3)$ &  $L(41, 12)$ & -1  &  -1 & -3 & -3 & -2 \\
						&   $L(41, 12)$ & $L(17, 3)$ & -1  & -2 & -$\tfrac83$ & -2 & 1 \\
 $o9_{36591}$ &          $L(31, 7)$ & $L(55, 21)$ & -1  &  -2 & -$\tfrac83$ & 1 & 1 \\
\midrule
 $o9_{37290}$ &       $L(31, 12)$ &   $L(19, 4)$ & -1  &  -2 & -4 & -3 & 1 \\
 $o9_{37552}$ &         $L(13, 3)$ &  $L(35, 8)$ &  -1  &   -1  &  -5  &  1  &  -2 \\
 $o9_{38147}$ &        $L(29, 12)$ &  $L(41, 11)$ & -1  &   -2 & -4 & 2 & 1 \\
 $o9_{38375}$ &         $L(17, 3)$ &   $L(29, 8)$   & -1  & -1 & -3 & -3 & 2 \\
			
 \midrule
 $o9_{38845}$ &         $L(18,5)$ & $L(13,2)$  &-2&0&-4&-2&2& $\dagger$\\
 $o9_{39220}$ &        $L(13, 2)$ &  $L(46, 17)$ &  -1  &  -1  &  -3  &  2  &  2 \\
 $o9_{41039}$ &    $L(21,8)$ & $L(13,2)$ &  -3 & 0 & -3 & -2 &   2 & $\dagger$ \\ 
 $o9_{41063}$ &          $L(41, 11)$ &  $L(26, 7)$ &  -1  &  -2 & -$\tfrac52$ & 1 & 1  \\
\midrule 

 $o9_{41329}$ &       $L(34, 9)$ &  $L(49, 18)$ & -1  &  -2 & -$\tfrac52$ & 2 & 1  \\
 $o9_{43248}$ &     $L(18,5)$ &  $L(37,8)$ & -2&0&-4&-2& -1 & $\dagger$\\
\bottomrule
\end{tabular}
\end{table}

\section{Jointly primitive presentations}\label{sec:jpp}

 A collection of simple closed curves $\{K_1, \dots, K_n\}$ in the boundary of a genus $g$ handlebody $H$ with $n\leq g$ is called {\em jointly primitive} if there exists a set of mutually disjoint meridional disks $\{D_1, \dots, D_n\}$ transverse to the curves such that $|K_i \cap D_j| = \delta_{ij}$.  Equivalently, let  $H^* =H[K_1, \dots, K_n]$ be the manifold formed by  attaching $2$--handles to $H$ along the curves $\{K_i\}$.  Then $\{K_1, \dots, K_n\}$ is jointly primitive if  the core arcs $\{K_1^*, \dots, K_n^*\}$ of these $2$--handles are  collectively trivial in $H^*$; the disks $D_i$ become mutually disjoint bridge disks for the arcs $K_i^*$.
 See \cite{Gordon}, \cite{Wu}.

\begin{dfn}
Suppose a $3$--manifold $M$ contains a properly embedded twice-punctured torus $\Sigma$ such that $H =M\setminus\nbhd(\Sigma)$ is a handlebody.  Let  $K$ be a framed knot in $M$ that may be isotoped to lie in $\Sigma$ so that it is non-separating and its framing agrees with the framing by $\Sigma$.  Such an embedding of $K$ in $\Sigma$ is a {\em jointly primitive presentation} of $K$  if the two impressions $K_+$ and $K_-$ in $\bdry \nbhd(\Sigma)$ are a jointly primitive pair of curves in $H$.  A framed knot $K$ is {\em jointly primitive}, or just {\em JP}, if it has a jointly primitive presentation.
\end{dfn}

Recall that a {\em cable space} is a Seifert fibered space over the annulus with (at most) one (possibly degenerate) exceptional fiber.   In a cable space, an annulus that is a union of regular Seifert fibers and that connects the two boundary components is called a {\em spanning annulus}.   Letting $A$ denote the annulus, denote by $A(p/q)$ the cable space obtained by $p/q$--surgery on an interior $S^1$ fiber of the product $A \times S^1$, with surgery description as in Figure~\ref{fig:cablespace}.
In $A(p/q)$, the exceptional fiber has order $|p|$ and is the core of the surgery, and we say the cable space has {\em order} $|p|$. When $|p|>1$, the cable space is the exterior of a torus knot in the solid torus.  When $|p|=1$, the exceptional fiber is actually a regular fiber and the cable space is the thickened torus.  When $p=0$,  the exceptional fiber is degenerate and the cable space is a connected sum of two solid tori (the exterior of the trivial two-component link). 

\begin{figure}
\includegraphics[width=.75\textwidth]{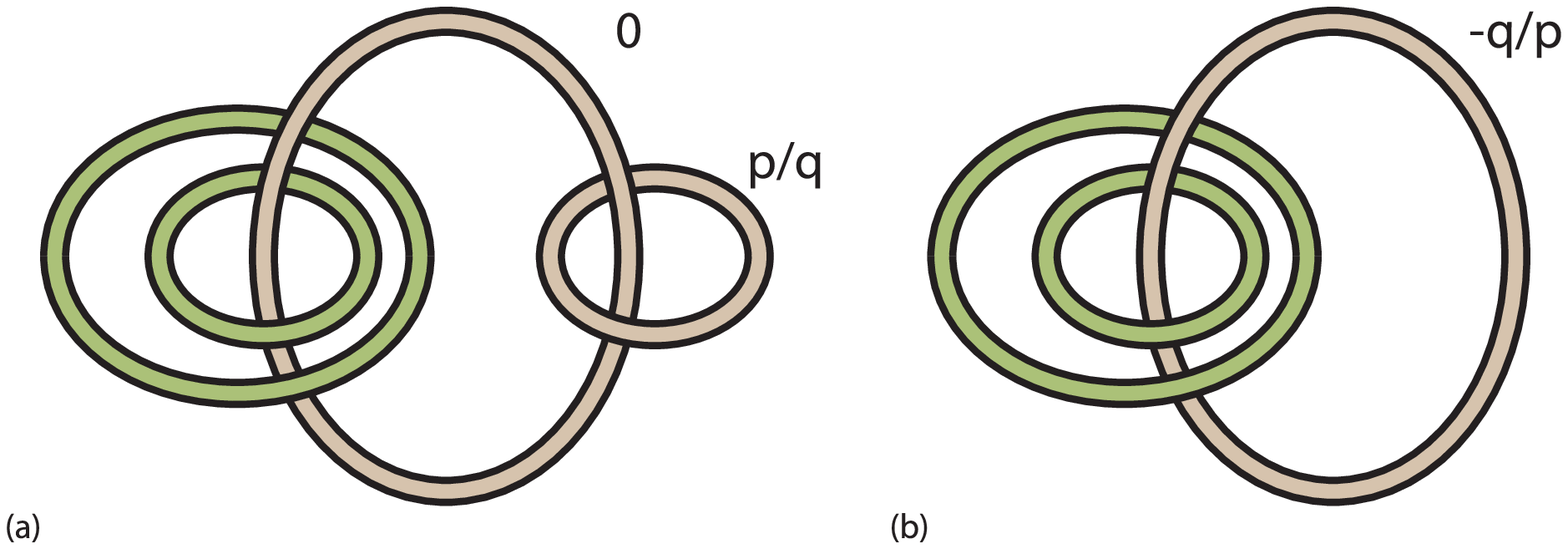}
\caption{(a) Performing the indicated $0$--surgery produces $S^1 \times S^2$ in which the other three components are $S^1$ fibers.  Then performing the $p/q$--surgery makes the exterior of the remaining two copmonents into the cable space $A(p/q)$. (b) A slam dunk produces an alternative surgery diagram of the cable space $A(p/q)$.}
\label{fig:cablespace}
\end{figure}

\begin{remark}\label{rem:cabletorus}
With this definition, the cable space $A(p/q)$ is orientation-preserving homeomorphic to the exterior of the $(p,q')$--torus knot in the solid torus where $qq' \equiv -1 \mod p$.
\end{remark}

\begin{lemma}\label{lem:jptocablespace}
Let $M$ be  a 3-manifold which contains a properly embedded twice-punctured torus $\Sigma$ such that $H =M \setminus N(\Sigma)$ is a handlebody. Assume a framed knot  $K$ in $M$ has a jointly primitive presentation in $\Sigma$.  If $\bdry M$ is two tori, then surface framed surgery on $K$ produces a cable space in which the image of $\Sigma$ is a spanning annulus. 
\end{lemma}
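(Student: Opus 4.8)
The plan is to track the surface-framed surgery through the splitting $M = H\cup\nbhd(\Sigma)$ and to recognize the result as a solid torus reglued along a spanning annulus. First I would pin down the genus of $H$ by an Euler characteristic count. Since $\partial M$ consists of two tori, $\chi(M)=0$, and since $\nbhd(\Sigma)\cong\Sigma\times[-1,1]$ meets $H$ exactly along the two copies $\Sigma_\pm$ of $\Sigma$ (the annuli $\partial\Sigma\times[-1,1]$ lying in $\partial M$), inclusion--exclusion gives $\chi(M)=\chi(H)+\chi(\Sigma)-2\chi(\Sigma)=\chi(H)-\chi(\Sigma)$, so $\chi(H)=\chi(\Sigma)=-2$ and $H$ is a genus-$3$ handlebody. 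The same picture shows each boundary torus of $M$ must contain a component of $\partial\Sigma$ (otherwise it would be a closed component of the connected surface $\partial H$), so the two punctures of $\Sigma$ lie on the two distinct tori.

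Next I would set up the local model of the surgery. Write $\nbhd(K)=A\times[-1,1]_t$, where $A=S^1\times[-1,1]_s$ is an annular neighborhood of $K$ in $\Sigma$ and $t$ is the transverse direction, so $\nbhd(K)\cong S^1\times D^2$ with $D^2=[-1,1]_s\times[-1,1]_t$. On $T=\partial\nbhd(K)$ the surface framing slope $\lambda_\Sigma$, the two boundary curves $k',k''$ of $A$, and the impressions $K_+,K_-$ are all parallel longitudes $S^1\times\{pt\}$. Because surface-framed surgery fills $T$ along $\lambda_\Sigma$, each of $k'$ and $k''$ bounds a meridian disk of the surgery solid torus $V$; choosing two disjoint such disks $\delta',\delta''$ caps the two new boundary curves created by cutting $\Sigma$ along $K$. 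As $K$ is nonseparating, cutting the genus-one twice-punctured $\Sigma$ along $K$ yields a four-holed sphere, and capping $k',k''$ produces an annulus $\widehat{\Sigma}$ whose two remaining boundary circles are the components of $\partial\Sigma$; by the first paragraph these lie on the two distinct boundary tori, so $\widehat{\Sigma}$ is a spanning annulus. The disks $\delta',\delta''$ split $V$ into two balls $V_0,V_1$, and I would check that, after removing a neighborhood of $\widehat{\Sigma}$, each $V_i$ is attached to $H$ along an annular neighborhood of an impression: $V_0$ along $K_+$ and $V_1$ along $K_-$. Thus $M_K\setminus\nbhd(\widehat{\Sigma})$ is exactly $H$ with $2$--handles attached along $K_+$ and $K_-$, i.e.\ $H^\ast=H[K_+,K_-]$.

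Now the jointly primitive hypothesis enters: by definition $\{K_+,K_-\}$ admits disjoint meridian disks $D_\pm$ of $H$ with $K_\pm\cdot D_\pm=1$ and $K_\pm\cdot D_\mp=0$, so attaching the $2$--handle along $K_+$ cancels a handle and leaves $K_-$ primitive, and attaching the second cancels again. Hence $H^\ast$ is a handlebody of genus $3-2=1$, that is, a solid torus $W$. Finally I would invoke a topological recognition step: $M_K$ has two torus boundary components, contains the spanning annulus $\widehat{\Sigma}$, and $M_K\setminus\nbhd(\widehat{\Sigma})=W$ is a solid torus. Cutting $\partial M_K$ along $\partial\widehat{\Sigma}$ tiles $\partial W$ cyclically by the four annuli $\widehat{\Sigma}_+,\widehat{\Sigma}_-$ and the two leftover boundary annuli, so $\widehat{\Sigma}_+$ and $\widehat{\Sigma}_-$ share a common core slope on $\partial W$. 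Taking the Seifert fibration of $W$ whose regular fiber on the boundary is this slope (with one exceptional fiber of order equal to the wrapping number, degenerating exactly in the cases allowed by the definition) and extending it across $\nbhd(\widehat{\Sigma})$ by the product fibers exhibits $M_K$ as Seifert fibered over the annulus with at most one exceptional fiber, with $\widehat{\Sigma}$ a union of regular fibers. This is precisely a cable space in which $\widehat{\Sigma}$ is a spanning annulus.

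I expect the main obstacle to be the second paragraph: rigorously matching the surface-framed Dehn surgery with the simultaneous operations ``cut $\Sigma$ along $K$ and cap the two new curves'' and ``attach $2$--handles to $H$ along $K_+,K_-$.'' Getting the local model of $V$ right---verifying that the capping disks $\delta',\delta''$ and the complementary balls $V_0,V_1$ play these two distinct roles, and that the framing hypothesis is exactly what makes $k',k''$ bound meridian disks of $V$---is where the care is needed; the Euler characteristic count and the final Seifert-fibration recognition are comparatively routine.
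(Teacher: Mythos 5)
Your argument is correct and follows essentially the same route as the paper's proof: decompose $M$ into $H$ and $\nbhd(\Sigma)$, observe that surface-framed surgery amounts to attaching $2$--handles to $H$ along the impressions $K_\pm$ (so that joint primitivity makes $H[K_+,K_-]$ a solid torus) while capping $\Sigma\setminus\nbhd(K)$ off to an annulus, and then extend the circle fibration of that annulus to a Seifert fibration with at most one (possibly degenerate) exceptional fiber at the core of the solid torus. The extra details you supply --- the Euler characteristic count for the genus of $H$, the explicit local model of the surgery solid torus split by the two meridian disks, and the check that the two impressions of the annulus have parallel essential cores in $\partial W$ --- are all consistent with (and in the case of the local model, spelled out in) the paper's treatment.
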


\begin{proof}
Upon performing surgery along $K$, $\Sigma$ becomes an annulus $\Sigma^*$.  This surgery has the effect of gluing $2$--handles to  the genus $3$ handlebody $H=M\setminus \nbhd(\Sigma)$ along each of the impressions $K_\pm$ of $K$.  Hence the result $M^*$ is the solid torus $H^*=H[K_+,K_-]$ glued to itself along the two impressions of the annulus $\Sigma^*$ in $\bdry H^*$.  
As $M$ has two boundary components, so must $M^*$.   
We may orient the core curves of the two impressions of $\Sigma^*$ to be  parallel curves in the torus $\bdry H^*$,  so that gluing  these annuli identifies these oriented curves coherently.

The fibration of the annulus $\Sigma^*$ by circles extends to a Seifert fibration of $M^*$ in which the core curve of the solid torus $H^*$ is the only possible exceptional fiber.  Since $\bdry M$ is two tori, $M^*$ is a cable space and $\Sigma^*$ is its spanning annulus.
\end{proof}

\begin{remark}\label{rem:twistedcablespace}
Let us define a {\em twisted cable space} to be an (orientable) Seifert fibered space over the \mobius band with (at most) one (possibly degenerate) exceptional fiber. A non-separating properly embedded annulus of regular fibers is a {\em twisted spanning annulus}.   
 Lemma~\ref{lem:jptocablespace} and its proof extend immediately to show that if $\bdry M$ is a single torus, then surgery on $K$ produces a twisted cable space in which the image of $\Sigma$ is a twisted spanning annulus.
\end{remark}

Observe that fillings of cable spaces are small Seifert fibered spaces, lens spaces, and connected sums of lens spaces.  
This construction therefore suggests a recipe for creating framed knots $K$ in closed $3$--manifolds $Y$ with surgeries to a filling of a cable space.

Consider a two-component link $C_0 \cup C_1 \subset Y$ whose exterior $M$ contains a twice-punctured torus $\Sigma$ with handlebody complement, and let $K$ be any framed knot with a jointly primitive presentation in $\Sigma$.  By Lemma~\ref{lem:jptocablespace}, surgery on $K$ transforms $M$ into a cable space $M^*$.  Hence surgery on $K$ creates the small Seifert fibered space which is the filling of the cable space $M^*$ by the solid tori $\nbhd(C_0)$ and $\nbhd(C_1)$.  

Assume for the moment that the cable space has order at least $2$. 
Since $\Sigma$ becomes a spanning annulus, a lens space is produced whenever $\bdry \Sigma$ meets one of $\bdry \nbhd(C_i)$ in a longitudinal curve.   Similarly, a connected sum of lens spaces is produced when $\bdry \Sigma$ meets one of $\bdry \nbhd(C_i)$ in a meridional curve.  If $\bdry \Sigma$ meets both $\bdry \nbhd(C_0)$ and $\bdry \nbhd(C_1)$ in non-longitudinal, non-meridional curves, then the result is a small Seifert fibered space that is neither a lens space nor a connected sum.  We refer to these three situations as conferring {\em longitudinal}, {\em meridional}, or {\em rational} jointly primitive presentations on the framed knot $K$ in $Y$.  For short, we say such framed knots are {\em LJP}, {\em MJP}, or {\em RJP}, respectively.

\begin{remark}
$\quad$ 
\begin{enumerate}
\item In this article, we focus primarily on the case of LJP framed knots, though examples and results concerning meridional jointly primitive presentations also arise naturally.   

\item In light of Remark~\ref{rem:twistedcablespace}, one may similarly consider jointly primitive framed knots $K$ in a twice-punctured torus rational Seifert surface $\Sigma$ for a knot $C$ in a closed $3$--manifold $Y$. By joining the two components of $\bdry \Sigma$ (while regarding them in $\bdry \nbhd(C)$), such knots $K$ have surgeries to manifolds containing Klein bottles.  In particular, if each component of $\bdry \Sigma$ were a longitude of $C$, $\Sigma \cup C$ would be a $1$--sided Heegaard splitting of $Y$ and $K$ would have a surgery to a prism manifold.  We do not pursue this case further here. 

\item One could also define a jointly primitive presentation of a framed knot with respect to a twice-punctured Klein bottle.  We do not address this either, beyond Remark~\ref{rem:2bridge} and a mention in Section~\ref{sec:mjp}.  \end{enumerate}
\end{remark}

\subsection{LJP knots and $(1,2)$--knots are surgery dual.}
Recall that a lens space decomposes along a torus into two solid tori $V$ and $V'$.  A knot in the lens space is a {\em $(1,n)$--knot} if it can be isotoped to meet each of $V$ and $V'$ in a collection of $n$ mutually trivial arcs.  More generally, if a manifold splits along a surface into two handlebodies of genus $g$ (i.e., it has a genus $g$ Heegaard splitting), then a knot in the manifold is a {\em $(g,b)$--knot} if it can be isotoped to meet each handlebody in a collection of $b$ mutually trivial arcs.  We say the {\em genus $g$ bridge number} of such knot is $b$ if it is not a $(g,b-1)$--knot with respect to any genus $g$ Heegaard splitting.

\begin{theorem}\label{thm:jpequivalence}
A framed knot $K$ has a longitudinal jointly primitive presentation if and only if its framed surgery dual $K^*$ is a $(1,2)$--knot in a lens space.
\end{theorem}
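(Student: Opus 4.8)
The plan is to track the knot through the cable-space surgery of Lemma~\ref{lem:jptocablespace}, mirroring the doubly primitive/$(1,1)$ correspondence but with the genus-$2$ Heegaard surface replaced by the twice-punctured torus $\Sigma$ and the single primitive curve replaced by the jointly primitive pair $K_+, K_-$; throughout, the common object is the exterior $X = Y \setminus N(K) = Y^* \setminus N(K^*)$, on which $K$ and $K^*$ are recovered by the two relevant fillings. Assume first that $K$ is LJP, so $K \subset \Sigma \subset M$ with $M$ the exterior of a two-component link $C_0 \cup C_1$, with $H = M \setminus N(\Sigma)$ a genus-$3$ handlebody and impressions $K_\pm$ jointly primitive, and with the longitudinal condition holding. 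By Lemma~\ref{lem:jptocablespace}, surface-framed surgery on $K$ turns $M$ into a cable space $M^*$ in which $\Sigma$ becomes a spanning annulus $\Sigma^*$ and $H$ becomes the solid torus $H^* = H[K_+, K_-]$, and the longitudinal condition makes $Y^* = M^* \cup N(C_0) \cup N(C_1)$ a lens space. I would first identify the surgery-dual $K^*$: since the surgery meridian is the surface framing, the dual core is isotopic to a meridian of $K$, which meets $\Sigma^*$ in exactly two points; equivalently, cutting $M^*$ along $\Sigma^*$ returns $H^*$ and splits $K^*$ into the two cocore arcs of the $2$-handles attached along $K_\pm$.

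\textbf{Forward key step.} The crucial choice is to take $T := \partial N(\Sigma^*)$ as the Heegaard torus of $Y^*$. Because $\Sigma^*$ is an annulus, $V := N(\Sigma^*)$ is a solid torus, and I would use the longitudinal condition to verify that the complementary piece $V' := Y^* \setminus N(\Sigma^*)$ is also a solid torus, so that $T$ realizes the genus-$1$ splitting $Y^* = V \cup_T V'$. Since $K^*$ meets $\Sigma^*$ twice, it meets $V = \Sigma^* \times I$ in two vertical arcs $\{pt\} \times I$, each manifestly boundary-parallel, while $K^* \cap V'$ consists of the two cocore arcs. Joint primitivity supplies disjoint meridian disks $D_\pm$ of $H$ with $|K_\pm \cap D_\pm| = 1$ and $|K_\mp \cap D_\pm| = 0$; these persist into $H^*$ as disjoint bridge disks exhibiting the two cocore arcs as mutually trivial arcs in $V'$. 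Hence $K^*$ meets each of $V$ and $V'$ in two mutually trivial arcs, so $K^*$ is a $(1,2)$-knot.

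\textbf{Reverse direction.} Assume $K^*$ is a $(1,2)$-knot in a lens space $L = V \cup_T V'$, meeting each solid torus in two mutually trivial arcs with disjoint bridge disks. I would set $\Sigma^*$ to be a spanning annulus of the solid torus $V$ (so that $N(\Sigma^*) = V$) positioned so that the two trivial arcs of $K^* \cap V$ cross it once each, making $K^*$ meet $\Sigma^*$ in two points. Reversing the surgery of the forward direction---surgering $K^*$ along the slope dual to the surface framing---recovers a knot $K$ together with the cable structure: $\Sigma^*$ reopens along the two crossing points into a twice-punctured torus $\Sigma$, the solid torus $V'$ reopens into a genus-$3$ handlebody $H = M \setminus N(\Sigma)$, and $C_0 \cup C_1$ is identified as the cores whose exterior is the recovered cable space $M^*$. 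The two trivial arcs of $K^* \cap V'$ become the impressions $K_\pm$, and their bridge disks become the disjoint meridian disks certifying that $K_+, K_-$ are jointly primitive. That the reconstructed filling is longitudinal---so that $K$ is LJP rather than MJP or RJP---follows from $L$ being a lens space.

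\textbf{Main obstacle.} The delicate part in both directions is the geometric dictionary across the filling. In the forward direction one must confirm that $T = \partial N(\Sigma^*)$ is genuinely the genus-$1$ Heegaard torus, i.e.\ that the longitudinal condition forces $V' = Y^* \setminus N(\Sigma^*)$ to be a solid torus and not a higher-genus or non-handlebody piece. In the reverse direction the crux is to reconstruct the twice-punctured torus $\Sigma$ with \emph{handlebody} complement and to verify joint primitivity of the impressions directly from the two bridge disks in $V'$, while simultaneously controlling the surgery slope so that the recovered framing is exactly the surface framing and the filling is longitudinal. Carefully matching the bridge disks with the jointly primitive disks $D_\pm$, and ensuring that no extra handles or compressions are introduced, is where the real work lies.
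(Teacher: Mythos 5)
Your forward direction contains a genuine error in the choice of Heegaard torus. You take $V=\nbhd(\Sigma^*)$ and $V'=Y^*\setminus\nbhd(\Sigma^*)$ and assert that the longitudinal condition forces $V'$ to be a solid torus. But $V'=H^*\cup\nbhd(C_0)\cup\nbhd(C_1)$, where $H^*=H[K_+,K_-]$ is the solid torus complementary to the spanning annulus in the cable space $M^*$. The longitudinal hypothesis only says that $\bdry\Sigma$ is a longitude of \emph{one} of the $C_i$, say $C_1$; so $\nbhd(C_1)$ is attached to $H^*$ along an annulus that is longitudinal on $\nbhd(C_1)$ and absorbs harmlessly, but $\nbhd(C_0)$ is attached along an annulus whose core is a winding-number-$p$ curve in $H^*$ (with $p$ the order of the cable space) and an unconstrained slope on $\bdry\nbhd(C_0)$. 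When $p>1$ and $\bdry\Sigma$ is not a longitude of $C_0$ --- which is the situation in essentially all of the examples in this paper, e.g.\ the families built from the cable spaces $A(2/1)$ and $A(3/1)$ --- this union is a Seifert fibered space over the disk with two exceptional fibers, not a solid torus. Equivalently, the core of $\nbhd(\Sigma^*)$ is a $(p,q')$--torus knot in the lens space $Y^*$, so its exterior is not a solid torus and $\bdry\nbhd(\Sigma^*)$ is not a Heegaard torus. The correct splitting puts $\nbhd(C_0)$ and $\nbhd(C_1)$ on the \emph{other} side: $V=H[K_+,K_-]$ and $V'=\nbhd(C_0\cup\Sigma^*\cup C_1)$. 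With that choice the two arcs of $K^*$ lying in $V'$ are no longer manifestly vertical, and the substantive step --- which your proposal never confronts --- is to show they form a trivial pair. The paper does this by exhibiting the cores $K'_\pm$ of the vertical annuli as a jointly primitive pair in the genus--$3$ handlebody $\nbhd(C_0\cup C_1)\cup\nbhd(\Sigma')$ (here $\Sigma'=\Sigma-\nbhd(K)$), using an arc of $\Sigma$ crossing $K$ once with endpoints on $\sigma_1$ and pushing the resulting disks through $\nbhd(C_1)$; this is exactly where the longitudinal hypothesis enters, and your argument has no substitute for it.

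Your reverse direction is closer to the paper's (the bridge disks of the two arcs in $V'$ becoming the disks certifying joint primitivity, and the annulus in $V$ being tubed along an arc of $K^*$ to form $\Sigma$), but two points are glossed over. First, the tubed surface is an orientable twice-punctured torus only when the two intersection points of $K^*$ with the annulus have opposite sign; arranging this requires the careful positioning of $C_0$ relative to the subdisks cut off by the two arcs, as in the paper's proof, and the opposite choice produces a Klein bottle (cf.\ Remark~\ref{rem:2bridge}). Second, taking $C_0\cup C_1=\bdry\Sigma^*$ for a core annulus of $V$ is a legitimate variant of the paper's cable construction, but you must still verify that every integral framing of the dual knot is realized as the surface framing of some such $\Sigma$; merely ``controlling the surgery slope'' is named as an obstacle in your write-up but not resolved.
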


\begin{proof} 
Throughout, let $Y^*$ be the lens space containing $K^*$ which is surgery dual to some framed knot $K$ in a $3$--manifold $Y$.

\smallskip
\noindent{\em If $K$ is LJP, then $K^*$ is a $(1,2)$--knot:}

Assuming $K \subset Y$ is LJP, there is a two-component link $C_0 \cup C_1 \subset Y$ and a twice-punctured torus $\Sigma$ properly embedded in $M = Y-\nbhd(C_0 \cup C_1)$ such that 
\begin{enumerate}
\item $K$ is a non-separating curve in $\Sigma$ whose integral framing is given by $\Sigma$;
\item $M -\nbhd(\Sigma)$ is a genus $3$ handlebody $H$;
\item the two impressions $K_\pm$ of $K$ in $\bdry H$ are jointly primitive in $H$; and
\item $\sigma_1=\bdry \Sigma \cap \bdry \nbhd(C_1)$ is a longitude of $C_1$.
\end{enumerate}
To perform framed surgery on $K$, first regard $\nbhd(\Sigma)$ as $\Sigma \times [-1,1]$ and $\nbhd(K)$ as $\big(K\times[-1,1]\big) \times  [-1,1]$ where the first interval factor determines an annular neighborhood of $K$ in $\Sigma$.  Then $\bdry \nbhd(K)$ naturally decomposes into four annuli, two of which are horizontal annular neighborhoods of the impressions $K_\pm$ in $\bdry H$ while the other two are the vertical annuli $\big(K \times \{\pm1\}\big) \times [-1,1]$.  Performing $\Sigma$--framed surgery on $K$ is then equivalent to attaching four $2$--handles to $Y-\nbhd(K)$ along these annuli. Furthermore, the union of the cocores of these four $2$--handles is the surgery dual knot $K^*$ in the resulting manifold $Y^*$. 
We will show that this structure naturally gives a two-bridge presentation of $K^*$ with respect to a genus $1$ Heegaard splitting of $Y^*$.

Decompose $Y-\nbhd(K)$ as the handlebody $H$ and the manifold $H' = \nbhd(C_0 \cup C_1) \cup \nbhd (\Sigma')$, where $\Sigma' = \Sigma - \nbhd(K)$. We may regard $\nbhd(\Sigma')$ as $\Sigma' \times [-1,1]$. This induces a decomposition of $Y^*$ into $V=H[K_+,K_-]$ and $V'=H'[K'_+,K'_-]$ where $K'_\pm=\bdry \Sigma'-\bdry \Sigma$ are the cores of the vertical annuli.

The jointly primitive presentation of $K$ in $\Sigma$ implies that the curves $K_\pm$ are jointly primitive in $H$.  Hence, attaching $2$--handles to $H$ along the impressions $K_\pm$ yields a solid torus $V = H[K_+,K_-]$ in which the cocores of these two $2$--handles form a  pair of trivial arcs. 

Since $K$ is a non-separating curve in $\Sigma$, $\Sigma'$ is a $4$--punctured sphere.  Hence $H'= \nbhd(C_0 \cup C_1) \cup \nbhd (\Sigma')$ is a genus $3$ handlebody.
Again because $K$ is a non-separating curve in $\Sigma$, there is a properly embedded arc $a$ in $\Sigma$ with $\bdry a \subset \sigma_1$ that crosses $K$ once.  Restricting to $\Sigma'$, $a$ splits into two arcs $a_\pm$ connecting $K'_\pm$ to $\sigma_1$.  Then in $\nbhd(\Sigma')$, these two arcs give rise to properly embedded disks $D'_\pm$ such that $|K'_i \cap D'_j| = \delta_{ij}$ for $i,j \in \{ +, -\}$.  Furthermore, since these disks each cross $\sigma_1 \times \{0\}$ once and $\sigma_1$ is a longitude of $C_1$, they may be extended through $\nbhd(C_1)$ to meridional disks of $H'$ without altering how they intersect $K'_\pm$.  Thus the pair of curves $K'_\pm$ are jointly primitive in the handlebody $H'$.  Attaching $2$--handles to $H'$ along the curves $K'_\pm$ therefore  yields a solid torus $V' = H[K'_+,K'_-]$ in which the cocores of these two $2$--handles form a  pair of trivial arcs. 

Thus $Y^*$ is a lens space and $K^*$ is a $(1,2)$--knot.

\medskip
\noindent
{\em If $K^*$ is a $(1,2)$--knot, then $K$ is LJP:}

Now assume $K^*$ is a knot with a $(1,2)$--presentation in the manifold $Y^*$.  That is, $Y^*$ admits a genus one Heegaard splitting as the union of two solid tori $V \cup V'$ with the property that $K^*$ intersects each of $V$ and $V'$ in a trivial pair of arcs which we denote as $K^*_a,K^*_c$ and $K^*_b,K^*_d$, respectively.  

Choose an orientation of $K^*$ and let these arcs inherit this orientation.   Since $K^*_a,K^*_c$ is a trivial pair of arcs in $V$, there is a disjoint pair of meridional disks $D_a, D_c$ of $V$ that contain the arcs $K^*_a,K^*_c$.  We may now take a core curve $C_0$ of $V$ that intersects each of $D_a$ and $D_c$ once.  Orient $C_0$ and the disks  so that both intersections are  positive.   Furthermore, we may isotope $C_0$, maintaining transversality to $D_a$ and $D_c$, so that its intersection with these disks is to the left of $K^*_a$ and to the right of $K^*_c$. Thus $K^*_a$ and $K^*_c$ cut off subdisks $E_a\subset D_a$ and $E_c\subset D_c$ that are disjoint from $C_0$ and the orientation on $K^*_a$ is opposite the induced orientation on $\bdry E_a$ while the orientation on $K^*_c$ agrees with the induced orientation on $\bdry E_c$.  See Figure~\ref{fig:21toLJP}. Next, choose an essential simple closed curve $C_1$ in $\bdry V$ that intersects $D_a$ and $D_c$ coherently and intersects each of $E_a$ and $E_c$ once.  Then $C_1$ is a cable of $C_0$ in $V$ and a spanning annulus $A$  intersects each of $K^*_a$ and $K^*_c$ once.
   Observe that $V$ may be regarded as a neighborhood of $C_0 \cup A \cup C_1$. 
      
\begin{figure}
\includegraphics[height=2in]{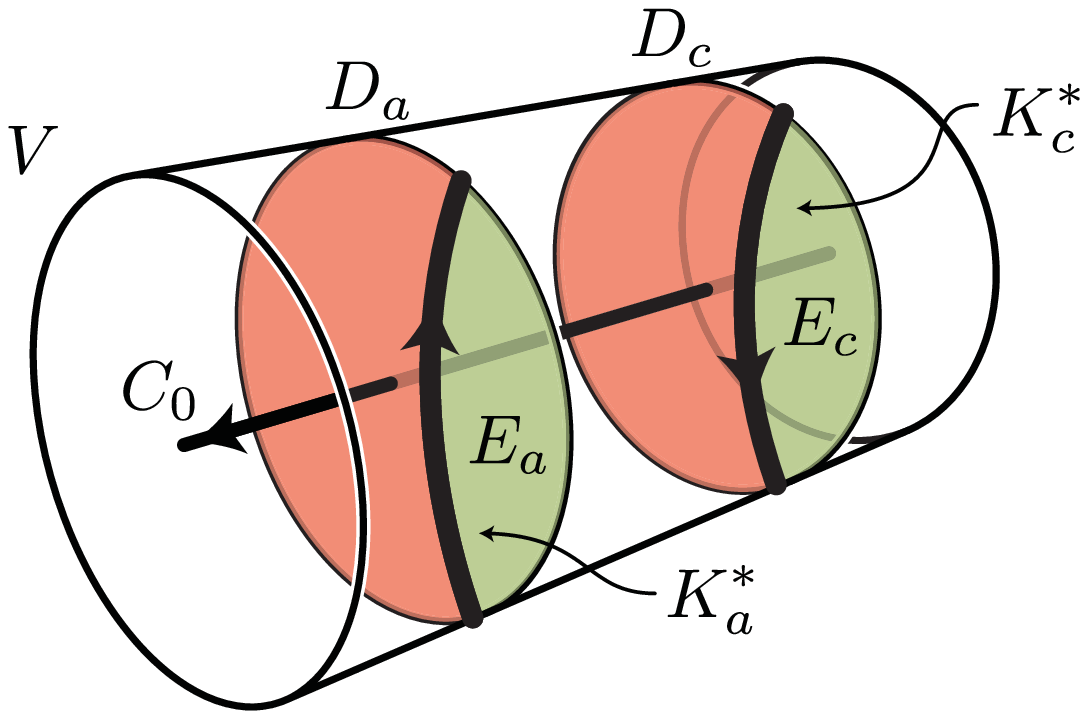}
\caption{The curve $C_0$ is chosen to intersect the disks $D_a$ and $D_c$ disjoint from the subdisks $E_a$ and $E_c$ as shown.
\label{fig:21toLJP}}
\end{figure}
   
  Since $K^*_b$ and $K^*_d$ are a trivial pair of arcs in $V'$, they have disjoint bridge disks $E_b$ and $E_d$ in $V'$; that is, $E_b$ and $E_d$ are a disjoint pair of embedded disks in $V'$ such that $\bdry E_i$ is the union of $K^*_i$ and an arc in $\bdry V'$  for  $i=b,d$.  Thus, in the genus $3$ handlebody $H=V'-\nbhd(K^*_b \cup K^*_d)$, the disks $E_b$ and $E_d$ certify the core curves $K_b$ and $K_d$ of the annuli $\bdry \nbhd(K^*_b) \cap \bdry H$ and $\bdry \nbhd(K^*_d) \cap \bdry H$ as jointly primitive.  

By construction, $K^*$ intersects $A$ geometrically twice but algebraically zero times.  Thus $A$ may be tubed along either arc of $K^*-A$ to form a twice-punctured torus $\Sigma$.  Then after any integral surgery on $K^*$ to a manifold $Y$, the surgery dual curve $K$ may be viewed as lying in $\Sigma$  so that $Y-\nbhd(C_0 \cup \Sigma \cup C_1) = H$ and the impressions of $K$ in $\bdry \nbhd(\Sigma)$ are the curves $K_b$ and $K_d$ in $\bdry H$.  Hence $K$ has a longitudinally jointly primitive presentation in $\Sigma \subset Y$.
\end{proof}

\begin{remark}\label{rem:2bridge}
At the end of the above proof, if $C_0$ were instead isotoped to intersect $D_c$ to the left of $K^*_c$, the above construction would result in $K^*$ intersecting $A$  both algebraically and geometrically twice.  Then tubing $A$ would form a twice-punctured Klein bottle $\Sigma$, and the surgery dual knot $K$ would have a longitudinally jointly primitive presentation in this $\Sigma \subset Y$ instead.  Indeed, the above proof easily extends to show that a framed knot $K$ is longitudinal jointly primitive with respect to a twice-punctured Klein bottle if and only if it is longitudinal jointly primitive with respect to a twice-punctured torus.
\end{remark}

\begin{cor}\label{cor:tn}
If a framed knot $K$ in a closed $3$--manifold $Y$ has a jointly primitive presentation, then the tunnel number of $K$ is at most $3$ and the Heegaard genus of $Y$ is at most $4$.  If $K$ has a longitudinally jointly primitive presentation, then its tunnel number is at most $2$ and the Heegaard genus of $Y$ is at most $3$.
\end{cor}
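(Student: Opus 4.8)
The plan is to derive both conclusions from a single Heegaard splitting of the knot exterior $X = Y \setminus \nbhd(K)$, using two standard reductions. First, the tunnel number of $K$ is the minimal Heegaard genus of $X$ minus one. Second, filling the torus $\bdry X$ along the meridian of $K$ turns a compression body with inner boundary $\bdry X$ into a handlebody of the same genus, so a genus $h$ Heegaard splitting of $X$ descends to one of $Y$; hence $g(Y) \le t(K) + 1$. Finally, since surgery dual knots have homeomorphic exteriors, $t(K) = t(K^*)$, so I may read off the bounds on the dual side.

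For the LJP case I would invoke Theorem~\ref{thm:jpequivalence}: $K^*$ is a $(1,2)$--knot, so the lens space $Y^*$ has a genus one Heegaard splitting $V \cup V'$ into solid tori meeting $K^*$ in a trivial pair of arcs on each side. A $(g_0,b)$--knot has tunnel number at most $g_0 + b - 1$: drilling the $b$ trivial arcs from one handlebody enlarges it to a genus $g_0+b$ handlebody, and the complementary side is then a compression body of the same genus with $\bdry X$ as inner boundary. With $(g_0,b)=(1,2)$ this yields $t(K)=t(K^*)\le 2$ and hence $g(Y)\le 3$.

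For general JP I would reuse the construction in the proof of Theorem~\ref{thm:jpequivalence}, but without the longitudinal hypothesis. Joint primitivity of $K_+,K_-$ in $H$ still makes $V = H[K_+,K_-]$ a solid torus meeting $K^*$ in a trivial pair of arcs, while the complementary arcs $K^* \cap V'$ are the cocores of the two $2$--handles attached to $H' = \nbhd(\Sigma') \cup \nbhd(C_0) \cup \nbhd(C_1)$ along $K'_\pm$; as cocores they are trivial arcs, and $V' \setminus \nbhd(K^* \cap V') = H'$. Drilling $K^*$ therefore decomposes $X$ as $A \cup_{T'} H'$, where $A = V \setminus \nbhd(K^* \cap V)$ is a genus three handlebody, $H'$ is a genus three handlebody, and $T' = \bdry V \setminus \nbhd(K^*)$ is a four--punctured torus. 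Pushing the knot--boundary torus entirely to one side converts this into an honest Heegaard splitting of $X$ of genus at most four, giving $t(K) \le 3$ and $g(Y) \le 4$.

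The crux is twofold. First, one must check that $H'$ really is a genus three handlebody for every JP presentation: this holds because $\sigma_0,\sigma_1 = \bdry \Sigma$ are puncture loops of the planar surface $\Sigma'$, hence primitive in the handlebody $\nbhd(\Sigma')$, and gluing a solid torus along an annulus whose core is primitive preserves both the handlebody structure and its genus, no matter what the attaching slope is (the slope records the cable--space geometry but is irrelevant here). Second, and this is the step that fixes the numerical bounds, one must account for the genus cost of absorbing $\bdry X$ into a single compression body when passing from the bridge--type decomposition $A \cup_{T'} H'$ to a genuine Heegaard splitting; a generous count gives the safe upper bound of four. It is precisely the honest genus one Heegaard splitting available in the LJP (lens space) case that sidesteps this reorganization and sharpens the bounds from $3$ and $4$ to $2$ and $3$.
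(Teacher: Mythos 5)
Your general reductions ($g(Y)\le t(K)+1$, $t(K)=t(K^*)$) and your LJP argument are correct and coincide with the paper's: both quote Theorem~\ref{thm:jpequivalence} and the bound $t\le g+b-1$ for a $(g,b)$--knot. The gap is in the general JP case, in the sentence claiming that the arcs $K^*\cap V'$, ``as cocores [of the two $2$--handles attached to $H'$ along $K'_\pm$], are trivial arcs.'' Cocores of $2$--handles are not automatically trivial: by the definition opening Section~\ref{sec:jpp}, triviality of these cocore arcs in $V'=H'[K'_+,K'_-]$ is \emph{equivalent} to $K'_\pm$ being jointly primitive in $H'$, and establishing that joint primitivity is exactly where the longitudinal hypothesis enters the proof of Theorem~\ref{thm:jpequivalence}. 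Were your claim true in general, $V'$ would always be a solid torus and $Y^*$ always a lens space, whereas for a merely rational JP knot $Y^*$ is a Seifert fibered space with three exceptional fibers. Consequently your decomposition $X=A\cup_{T'}H'$ (which is valid: both pieces are genus three handlebodies, and your justification that $H'$ is one is fine) is not a bridge decomposition, and the final step --- ``pushing the knot--boundary torus entirely to one side'' yields genus at most four --- is precisely the step that requires trivial arcs on at least one side; you have asserted them on the side where they fail and offered only ``a generous count'' in place of a proof.

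The step can be repaired by using the side where joint primitivity actually provides trivial arcs: $K^*\cap V$ is a pair of mutually trivial arcs in the solid torus $V=H[K_+,K_-]$, so $N'=V\cup\nbhd(K^*\cap V')$ is a genus three handlebody with complement $H'$, and the bridge disks in $V$ exhibit $K^*$ as a core of $N'$; then $N'\setminus\nbhd(K^*)$ is a compression body and one in fact gets Heegaard genus of $X$ at most three. But be aware that this dual-side route is entirely different from the paper's, which never leaves $Y$: the paper writes down an explicit tunnel system for $K$ consisting of $C_0\cup C_1$ together with three arcs that cut $\Sigma\setminus K$ into a disk $D$, and observes that the complement of a neighborhood of $K$ and this system is the genus three handlebody $H$ with a $1$--handle dual to $D$ attached --- a genus four handlebody. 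That argument uses only that $H$ is a handlebody (not even the primitivity of $K_\pm$) and delivers the stated bounds with no bridge-position bookkeeping at all.
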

\begin{proof}
Since the Heegaard genus of a manifold containing a tunnel number $n$ knot is at most $n+1$, we  need only bound the tunnel numbers of these knots.

Suppose $K$ has a jointly primitive presentation with respect to $\Sigma$.  Since $\Sigma - K$ may be cut into a disk $D$ by a pair of arcs running from $K$ to each component of $\bdry \Sigma$  together with a third arc from $K$ to itself, the union of these arcs with $\partial \Sigma=C_0 \cup C_1$ form a $3$ tunnel system for $K$.  The exterior of $K$ and these tunnels is a genus $4$ handlebody that may be viewed as the genus $3$ handlebody $H = Y-\nbhd(C_0 \cup \Sigma \cup C_1)$ together with an extra $1$--handle whose cocore is the disk $D$.  

Since a knot with a $(g,b)$--presentation has tunnel number at most $g+b-1$, a knot with a $(1,2)$--presentation has tunnel number at most $2$.  If $K$ has a longitudinally jointly primitive presentation, then its surgery dual has a $(1,2)$--presentation by Theorem~\ref{thm:jpequivalence}.   Hence $K$ has tunnel number at most $2$.
\end{proof}

\subsection{Fibered jointly primitive presentations}
The proof of Theorem~\ref{thm:jpequivalence}  shows that an integral surgery dual to a $(1,2)$--knot has infinitely many LJP presentations,  distinguished by the order of wrapping of $\bdry \Sigma$ about $C_0$. 
Indeed, one may always find an LJP presentation in which $\Sigma$ is longitudinal on each of $C_0$ and $C_1$; in particular, $\Sigma$ will be a Seifert surface for the link $C_0 \cup C_1$.     It seems desirable to identify a further feature that gives preference to certain jointly primitive presentations over others.

\begin{dfn}
Suppose  $K$ is a knot with a jointly primitive presentation in a twice-punctured torus $\Sigma$ properly embedded in the exterior of a link $C_0 \cup C_1\subset Y$.  The jointly primitive presentation  is {\em fibered} if   $\Sigma$ is a fiber in a fibration of $Y-\nbhd(C_0 \cup C_1)$. 
\end{dfn}  

\begin{remark}
When  $\Sigma$ is a fiber, the handlebody $H=Y-\nbhd(C_0 \cup C_1)-\nbhd(\Sigma)$ is a product $\Sigma \times [-1,1]$ in which the impressions of $K\subset \Sigma$ are the curves $K_\pm = K\times \{\pm1\}$.   As such, each curve $K_\pm$  is individually already a primitive curve in $H$; hence $H[K_+, K_-]$ is a solid torus only if the pair is jointly primitive. 
\end{remark}

As will be shown, the knots of Theorem~\ref{thm:bulkcable}, Theorem~\ref{thm:bulk2}, and Addendum~\ref{add:t12685} 
are all fibered LJP. 
So it's reasonable to ask if Theorem~\ref{thm:jpequivalence} can be improved to a statement about fibered longitudinal jointly primitive presentations.

\begin{question}
If a framed knot $K \subset Y$ has a longitudinally jointly primitive presentation, then does it also have a fibered longitudinally jointly primitive 
 presentation?  
\end{question}

Indeed, many of the LJP knots in lens spaces obtained in Appendix~\ref{sec:magic} are not given as fibered LJP knots and do not obviously also have a fibered LJP presentation.  Furthermore, doubly primitive knots are LJP as we will see in Corollary~\ref{cor:doublyprimitive}, but it is not immediately clear whether or not they have fibered LJP presentations.

\subsection{Connections and questions}\label{sec:questions}

The notion of a jointly primitive presentation introduced in this paper has close connections to other objects in the literature.  This subsection notes a few important relationships and poses some further questions. 

\subsubsection{Meridional jointly primitive presentations}\label{sec:mjp}
The reader may notice that a  meridional jointly primitive presentation of a framed knot $K$ is simply a knot $K$ realized as a  framed, non-separating curve in a once-punctured torus (or Klein bottle  $\hat{\Sigma}$) that is (rationally) bounded by another knot $C_0$.  
By the same philosophy as Lemma~\ref{lem:jptocablespace}, surgery on the framed knot $K$ transforms the 
surface $\hat{\Sigma}$ into a disk $D$ so that $\nbhd(C_0 \cup D)$ is a once-punctured lens space.

In another article in progress \cite{baker-newred}, the first author exploits this construction to recover and extend all examples found in the literature of hyperbolic manifolds with two reducible surgeries.  Applying similar techniques, he further extends the examples of reducible surgeries on hyperbolic knots in lens spaces. 

\subsubsection{Doubly primitive knots}\label{sec:doubly_primitive_knots}
A framed knot $K$ in a closed $3$--manifold $Y$ has a {\em doubly primitive presentation} if  $K$ may be isotoped to embed in a genus $2$ Heegaard surface 
so that $K$ is primitive in each of the two handlebodies \cite{berge2018some}.   
The Berge Conjecture asserts that any framed knot in $S^3$ with surgery to a lens space has a doubly primitive presentation \cite{berge2018some, gordon1998dehn}. A similar conjecture proposes that any framed knot in $S^1 \times S^2$ with surgery to a lens space has a doubly primitive presentation \cite{BBLS1xS2}, \cite{Greene2013Realization}.
Since doubly primitive knots are surgery dual to $(1,1)$--knots which stabilize to have $(1,2)$--presentations,
Theorem~\ref{thm:jpequivalence} implies that each doubly primitive knot has a longitudinally jointly primitive presentation.   

\begin{cor}\label{cor:doublyprimitive}
If a framed knot $K$ in a closed $3$--manifold $Y$ has a doubly primitive presentation, then it admits a longitudinally jointly primitive  presentation. \qed
\end{cor}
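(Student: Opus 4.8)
The plan is to concatenate two surgery characterizations that are already in hand. Assume $K \subset Y$ has a doubly primitive presentation. By the surgery characterization of the doubly primitive property recalled above (the cited theorem of Berge), the framed surgery dual $K^*$ is a $(1,1)$--knot in a lens space $Y^*$. On the other hand, Theorem~\ref{thm:jpequivalence} asserts that $K$ is LJP precisely when $K^*$ is a $(1,2)$--knot in a lens space. Since the framing determines $K^*$ consistently in both statements and the bridge conditions concern only the underlying knot, the entire corollary reduces to the single elementary fact that every $(1,1)$--knot in a lens space is also a $(1,2)$--knot.

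To see this, I would perturb the bridge presentation. Write $Y^* = V \cup_S V'$ for the genus--one Heegaard splitting realizing the $(1,1)$--presentation, with $K^*$ meeting $V$ in a single trivial arc $\alpha$ and $V'$ in a single trivial arc $\beta$. Choose an interior point of $\alpha$ and push a short finger of $\alpha$ across $S$ into $V'$. This splits $\alpha$ into two arcs in $V$ and introduces one new short arc in $V'$, so that $K^*$ now meets each of $V$ and $V'$ in two arcs. The two arcs in $V$ remain mutually trivial, with bridge disks obtained by subdividing the bridge disk for $\alpha$; the new finger arc in $V'$ bounds a small bridge disk disjoint from that of $\beta$. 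Thus $K^*$ is exhibited as a $(1,2)$--knot with respect to the same Heegaard surface, which is the sense in which a $(1,1)$--knot ``stabilizes'' to a $(1,2)$--knot.

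With $K^*$ recognized as a $(1,2)$--knot, the ``if'' direction of Theorem~\ref{thm:jpequivalence} applies and returns a longitudinally jointly primitive presentation of $K$, completing the argument. The only genuine content is the perturbation step, and the sole point requiring care there is the \emph{disjointness} of the two families of bridge disks after the finger move, rather than their mere individual existence; this is routine in the theory of bridge position. Everything else is a formal composition of the two cited equivalences, which is why the statement is essentially immediate.
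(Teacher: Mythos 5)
Your argument is correct and is exactly the paper's proof: the paper disposes of this corollary in one sentence by citing Berge's surgery characterization, the fact that a $(1,1)$--knot stabilizes to a $(1,2)$--knot, and Theorem~\ref{thm:jpequivalence}. Your finger-move description just spells out the standard stabilization step that the paper takes for granted.
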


Given this observation, there are a couple of natural questions exploring this relationship. 

\begin{question}\label{ques:lensLJP}
If a framed knot in $S^3$ or $S^1 \times S^2$ has a lens space surgery, then is it LJP?
\end{question}

\begin{question}
If a framed knot in $S^3$ or $S^1 \times S^2$ is LJP, then is it also doubly primitive?
\end{question}

\begin{question}
If framed knots $K, K^*$ in lens spaces $Y,Y^*$ are surgery dual, then must either $K$ or $K^*$ be LJP?  Equivalently, must either $K$ or $K^*$ have a $(1,2)$--presentation?
\end{question}

Since Berge knots (the doubly primitive knots in $S^3$ \cite{berge2018some, Greene2013Realization}) are LJP by Corollary~\ref{cor:doublyprimitive}, we may make a few inferences.

\begin{remark}
There are Berge knot complements with arbitrarily large volume \cite{baker2008large}. Therefore, it would be impossible for any single link, let along the link in Figure \ref{fig:masterlinksurface}, to be the common ancestor of all LJP knots in a given lens space. 
\end{remark}

\begin{remark}
There are Berge knots in $S^3$ whose genus $1$ bridge number is arbitrarily large (see e.g., \cite[Theorem~1.3]{bowmantaylorzupan} and \cite[Theorem~1.3]{BakerKobayashiRieck}). Hence Theorem~\ref{thm:jpequivalence} implies that, while they are LJP knots themselves, these Berge knots of large genus $1$ bridge number are not surgery dual to LJP knots in lens space.  

Indeed, \cite{BGLintegral} can be used to show that ``most'' of the LJP knots in lens spaces obtained in Theorem~\ref{thm:bulk} have genus $1$ bridge number greater than $2$. Let us use the surgery descriptions and notation of some knots and manifolds described in Section~\ref{sec:surgerydiagram}.  For example, since the exterior of $L_+$ with the surgery duals to $C_0 \cup C_1$ in $Y(-1,r,s,b)$ for $r\in \Z$ can be seen through surgery calculus to be the hyperbolic ``magic manifold'', Thurston's Hyperbolic Dehn Surgery Theorem implies that the image of $L_+$ in $Y(-1,-1,-3+1/n,b)$ is hyperbolic for integers $n,b$ of suitably large magnitude.  Hence the annulus cobounded by $L_+$ and $L_-$ cannot lie in the Heegaard torus of the lens space and the work of \cite{BGLintegral} applies.
\end{remark}

\subsubsection{LJP knots in the Poincare homology sphere}\label{PHS}

In \cite{baker-PHSlens}, longitudinally jointly primitive presentations were used to create framed knots in the Poincare homology sphere  without doubly primitive presentations that nevertheless admit lens space surgeries. The Whitehead link is a fibered link with a twice-punctured torus fiber.  With respect to the framing of the Whitehead link by this fiber, $-1$ surgery on both components produces the Poincare homology sphere.  

In Section~\ref{sec:surgerydiagram} we give surgery descriptions of some manifolds and knots.  Using that notation, the manifold $M(-1,+1)$ is the exterior of the Whitehead link and $Y(-1,+1,-1,-1)$ is the Poincare homology sphere filling of it.
It turns out that there are (at least) two families of jointly primitive knots in $M(-1,+1)$ that induce families of LJP knots in the Poincare homology sphere.  One family may be described as $K_k(-1,+1,-1,-1)$, and it may be shown that these correspond to a subfamily of Hedden's knots \cite{Hedden, Baker-almostsimple}.  The 
other, which corresponds to the family in \cite{baker-PHSlens}, doesn't exactly fit into the $K_k(m,r,s,b)$ structure. Nonetheless, it may be described by surgery on the link $\calL \cup K$ of Figure~\ref{fig:surgerylink} as the images of $K$ 
in 
$S^3_\calL(1/2,0,2,2,1/k,-1/(k+1))$ 
for $k \in\Z$, up to homeomorphism.  We note that the trivial filling of $K$ here then produces the mirror image of $Y(-1,+1,-1,-1)$.  

In line with Question~\ref{ques:lensLJP} we also ask the following. 
\begin{question}\label{q:PHS}
	If a framed knot in the Poincare homology sphere has a lens space surgery, then is it LJP? 
\end{question}


\section{Symmetries and Dehn surgery}\label{sect:symm_and_Dehn_surgery}

A manifold $M$ is \emph{asymmetric} if its only self-homeorphisms are isotopic to the identity.   This section outlines a method for obtaining asymmetric manifolds as fillings of cusped hyperbolic manifolds.

For hyperbolic $3$--manifolds, asymmetry is equivalent to having trivial isometry group, which in the case of cusped manifolds may be checked using the Epstein-Penner canonical cell decomposition (see \cite{weeks1993convex}).   More generally, methods to ensure asymmetry are well known to  experts and are discussed in the one-cusped case in \cite[\S2]{DHL} (see also \cite[Theorem 5.2]{auckly2014two}).  Here, we devote  additional care to the case of multi-cusped manifolds in order to precisely establish our notions of  \emph{generic} and \emph{symmetry-breaking} fillings. (See also \cite[\S4]{bakerluecke-asymmetricLspaceknots}.)   

\subsection{Isometry groups of fillings} \label{sec:isometrygroupsfillings}

For $m \geq 1$, 
let $\bar{M}$ be a closed orientable $3$--manifold whose boundary consists of  tori $T_1, \dots, T_m$.  
For each $j$, fix a basis for $H_1(T_j; \Z)$ so that a slope (an isotopy class of essential simple closed curve) in $T_j$ may be expressed as a pair of relatively prime integers $\psi_j = (p_j,q_j)$.  Since orientation of a slope is irrelevant, we may also express the slope as the rational number $p_j/q_j$ allowing $1/0$.   We further allow ``empty'' slopes which are expressed as the pair $(\infty,\infty)$ or as the symbol $*$. 
A {\em multislope}  for $\bar{M}$ is an $m$--tuple $\psi = (\psi_1, \dots, \psi_m)$ of slopes in $\bdry \bar{M}$.  The set of all multislopes is $\Psi$.  The Dehn filling $\bar{M}(\psi)$ of $\bar{M}$ along the multislope $\psi$ is obtained by attaching  solid tori to $\bdry \bar{M}$ so that the meridian of the solid torus attached to $T_j$ is identified with the slope $\psi_j$.  If $\psi_j$ is the empty slope, then $T_j$ is left unfilled.  The core curves of the attached solid tori in $\bar{M}(\psi)$ are the {\em cores} of the filling.

Let $M$ be the $m$--cusped orientable hyperbolic $3$--manifold that is the interior of $\bar{M}$, where we identify $\bdry \bar{M}$ with $\bdry M$ and  fillings $\bar{M}(\psi)$ with $M(\psi)$.
Following the discussion surrounding \cite[Equations (26) p1068 and (37) p1076]{hodgsonKerckhoff2008}, (but using different notation),  the \emph{normalized length} $|\psi_{j}|$ of a slope $\psi_j$ is the translation distance of the appropriate holonomy representative of $\psi_{j}$ in the horoball neighborhood of area $1$, and  the {\em normalized length}  $|\psi|$ of a multislope $\psi= (\psi_1, \dots, \psi_m)$ is defined by 
  \[ \frac{1}{|\psi|^2} = \sum_j \frac{1}{|\psi_{j}|^2}.\]
  The normalized length of the empty slope is defined to be $\infty$. Consequently, the normalized length of a multislope is finite unless it is the empty multislope $\psi^\infty = \big((\infty,\infty), (\infty,\infty),  \dots, (\infty,\infty)\big)$.
Let $\Psi_N \subset \Psi$ be the subset of multislopes in $\bdry M$ whose component slopes each have normalized length greater than $N$.  That is, 
\[ \Psi_N = \{ \psi = (\psi_1, \dots, \psi_m) \colon |\psi_j| > N \mbox{ for all } j=1, \dots, m \}.\]
Note that for each $N$, the complement of $\Psi_N$ is finite and if $\psi \in \Psi_N$, then $|\psi|>N$.

Consider a hyperbolic $3$--manifold $\widehat{M}$ containing a geodesic link $L$ so that $M=\widehat{M}-L$.  Let $I_L$ be the group of isometries of $N$ which preserve $L$ setwise.  In \cite[Section 5]{kojima}, Kojima describes how restricting the domain of an isometry $f \in I_L$ to $M=\widehat{M}-L$ induces an isometry $rest(f)$ of $M$.  Then \cite[Lemma 5]{kojima} shows that this restriction map is a monomorphism. 

\begin{theorem}\label{thm:fillinglengths}
	Let $M$ be an $m$--cusped orientable hyperbolic $3$--manifold. Set $C= 7.5832$.  
	\begin{enumerate}
		\item For each $\psi \in \Psi_{C}$, $M(\psi)$ is a hyperbolic $3$--manifold in which the cores of the filling are mutually disjoint simple closed geodesics.
		\item There is a constant $C_M>C$ depending on $M$ such that for each $\psi \in \Psi_{C_M}$, the cores of the filling constitute the shortest geodesics in $M(\psi)$ so that Kojima's restriction map $rest$ is a monomorphism $Isom(M(\psi)) \hookrightarrow Isom(M)$.
	\end{enumerate}
\end{theorem}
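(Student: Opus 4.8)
The plan is to obtain (1) directly from the universal hyperbolic Dehn filling theorem of Hodgson--Kerckhoff \cite{hodgsonKerckhoff2008} --- indeed the constant $C=7.5832$ is precisely their universal normalized--length bound --- and to deduce (2) by combining a length estimate for the core geodesics with Thurston's geometric convergence and Kojima's restriction monomorphism \cite{kojima}.

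For (1), I would run the cone--manifold deformation of \cite{hodgsonKerckhoff2008}: beginning with the complete cusped structure on $M$ (cone angle $0$ along the filling curves) and increasing all cone angles simultaneously to $2\pi$. The hypothesis $\psi\in\Psi_{C}$ forces the relevant normalized length to exceed the universal bound $C$, which guarantees that this deformation never degenerates; at cone angle $2\pi$ the cone structure is smooth and is the hyperbolic structure on $M(\psi)$. Throughout the deformation the singular locus is an embedded geodesic link, so at the endpoint it is realized as the cores of the filling, which are therefore mutually disjoint simple closed geodesics. This is exactly statement (1).

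For (2), there are two ingredients. First, as the normalized length of $\psi$ grows, the Hodgson--Kerckhoff (equivalently Neumann--Zagier) estimates show that the embedded tube radius about each core tends to infinity and the core length tends to $0$; hence for $|\psi|$ large every core is arbitrarily short. Second, I must bound the lengths of the \emph{non--core} geodesics from below, uniformly. Here I would use that $M(\psi)\to M$ geometrically as $|\psi|\to\infty$ (Thurston's hyperbolic Dehn surgery), together with the bilipschitz control of the thick part supplied by \cite{hodgsonKerckhoff2008}: outside the deep tubes about the cores, $M(\psi)$ is $(1+\delta)$--bilipschitz to the corresponding part of $M$ with $\delta\to 0$, while any geodesic that enters a deep core tube without being the core must be long. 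Consequently every non--core closed geodesic of $M(\psi)$ has length bounded below by a constant approaching the systole $s_M>0$ of $M$. I would then fix $C_M>C$ large enough that, for all $\psi\in\Psi_{C_M}$, every core has length $<\tfrac12 s_M$ while every non--core geodesic has length $>\tfrac12 s_M$. Thus the union $L$ of the cores is exactly the set of closed geodesics of length $<\tfrac12 s_M$, and in particular the cores constitute the shortest geodesics of $M(\psi)$.

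Finally, because $L$ is characterized by this length threshold it is invariant under every isometry of $M(\psi)$; in Kojima's notation this says $\Isom(M(\psi))=I_L$. Restricting isometries to $M(\psi)\setminus L = M$ then lands in $\Isom(M)$, and \cite[Lemma 5]{kojima} shows this restriction map is injective, giving the monomorphism $\rest\colon \Isom(M(\psi))\hookrightarrow\Isom(M)$. The main obstacle is making the lower bound on non--core geodesic lengths \emph{uniform} over the infinite set $\Psi_{C_M}$: a bare geometric--convergence/compactness argument only controls sequences, so to extract a single working constant $C_M$ one must appeal to the quantitative tube--radius and bilipschitz estimates of Hodgson--Kerckhoff rather than to convergence alone.
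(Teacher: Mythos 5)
Your proposal is correct and follows the same overall architecture as the paper: part (1) is quoted from the Hodgson--Kerckhoff universal bound, and part (2) combines the volume/length estimates (which force the cores to become arbitrarily short), a lower bound on the lengths of non-core geodesics, and Kojima's Lemma~5. The one place you diverge is the step you yourself flag as the main obstacle: making the lower bound on non-core geodesics uniform over $\Psi_{C_M}$. You propose resolving this with the quantitative tube-radius and bilipschitz estimates of Hodgson--Kerckhoff, which does work. The paper instead uses exactly the ``bare'' sequential argument you were worried about, and it suffices here for a structural reason: for every $N$ the complement of $\Psi_N$ in $\Psi$ is \emph{finite}, so after partitioning multislopes by which cusps they fill, each stratum $\Psi^J_C$ can be enumerated as a single sequence converging to the empty multislope, with all but finitely many terms lying in $\Psi^J_N$ for any given $N$. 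Geometric convergence $M(\psi^i)\to M$ plus the Margulis thick--thin decomposition (Benedetti--Petronio, Theorems E.2.4 and D.3.11) then shows that for all but finitely many terms the $\epsilon$--thin part consists only of core tubes and cusp neighborhoods, whence the cores are the unique geodesics of length less than $\epsilon$; the finitely many exceptions are absorbed by enlarging $C^J_M$, and $C_M$ is the maximum over the strata $J$. Your quantitative route buys (in principle) an effective $C_M$, which the paper's argument does not provide and which the authors explicitly note they cannot estimate; the paper's route avoids the care needed in your bilipschitz step to rule out non-core geodesics that are null-homotopic or peripheral when pulled back to $M$, since the thick--thin argument handles all short geodesics at once.
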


\begin{remark}
	The manifold-dependent constant $C_M$ in Theorem~\ref{thm:fillinglengths}(2) is necessary.  For example, consider a hyperbolic $3$--manifold $N$ with an isometry that exchanges two disjoint simple geodesics $L_1$ and $L_2$.  Then this isometry of $N$ does not restrict to an isometry of $M=N-L_1$, the complement of one of the geodesics. Hence $Isom(M)$ is not a subgroup of $Isom(N)$.
\end{remark}

\begin{proof}
	\cite[Theorem 1.2]{hodgsonKerckhoff2008} makes Thurston's Hyperbolic Dehn Surgery Theorem \cite[Theorem 5.8.2]{ThurstonNotes} explicit and states that any $\psi \in \Psi_C$ is in the hyperbolic Dehn surgery space for $M$.
	This implies that $M(\psi)$ is a hyperbolic manifold and the core curves of the filling are mutually disjoint geodesics; see \cite{hodgsonKerckhoff2008}.
	
	For $\psi \in \Psi_C$, \cite[Theorem 5.12]{hodgsonKerckhoff2008} bounds the difference between the volumes of $M$ and $M(\psi)$ in terms of $|\psi|$.  By \cite[Theorem 1B]{neumannzagier}, this translates to a bound on the sum of the lengths of the geodesic cores of the fillings. In particular,  as $|\psi|$ goes to $\infty$, the total length of the geodesic cores of the fillings goes to $0$.  
	
	We now partition $\Psi$ according to which components of a multislope are the empty slope.  For each subset $J$ of $\{1, \dots, m\}$,  let $\Psi^J$ be the subset of multislopes $\psi=(\psi_1, \dots, \psi_m)$ in $\Psi$ such that $\psi_i =(\infty,\infty)$ if and only if $i \not \in J$.  (In particular, note that (a) $J$ determines the set of cusps of $M$ that are filled by a multislope $\psi \in \Psi^J$ and (b) $\Psi^\emptyset$ is the singleton set consisting of the empty multislope $\psi^\infty$ for which $M(\psi^\infty) = M$.)  Then let $\Psi^J_N = \Psi^J \cap \Psi_N$.  Since $\Psi_N$ has finite complement in $\Psi$ for all numbers $N$, it follows that the complement of $\Psi^J_N$ is finite in $\Psi^J$.
	
	For each $J \neq \emptyset$, we may order the multislopes in $\Psi^J_C$ as $\{\psi^i\}$ so that for each $N$ we have $\psi^i \in \Psi^J_N$ for all but a finite number of initial terms of the sequence.  In particular, this means that $\psi^i$  converges to the empty multislope $\psi^\infty$; more precisely, the sequence $\{(|\psi_1^i|, \dots, |\psi_m^i|)\}$  converges to $(\infty, \dots, \infty)$ and the corresponding sequence of filled manifolds $M(\psi^i)$ limits to the unfilled manifold $M=M(\psi^\infty)$.

	Then \cite[Theorem E.2.4]{BP} shows that for suitably small $\epsilon >0$ (depending on $M$ and $J$),  there exists a sufficiently large  constant $C^J_M>C$ so that if $\psi\in\Psi_{C^J_M}$, then (a) the total length of core geodesics of the filling $M(\psi)$ is less than $\epsilon$, and (b) the $\epsilon$--thin part of $M(\psi)$ consists of the tubular neighborhoods of the core geodesics and cusp neighborhoods of the unfilled cusps.  By definition, any  geodesic of length less than $\epsilon$ must be in the $\epsilon$--thin part of $M(\psi)$ (e.g., \cite[Section D.1]{BP}).  Furthermore, by \cite[Theorem D.3.11]{BP} these core curves are the unique geodesics in these tubular neighborhoods.  Thus, the cores of the fillings are the unique simple geodesics in $M(\psi)$ of length less than $\epsilon$.	Therefore any isometry of $M(\psi)$ must fix the link of core geodesics of the $\psi$--filling.  Thus \cite[Lemma 5]{kojima} gives the natural monomorphism $rest$ of $Isom(M(\psi))$ into $Isom(M)$.
	
	Finally, set $C_M$ to be the maximum of these constants $C^J_M$ among all non-empty subsets $J$ of $\{1, \dots, m\}$.  Then if $\psi \in \Psi_{C_M}$, the conclusion of the previous paragraph holds.	
\end{proof}

 \begin{dfn}  \label{dfn:symmetrybreak}
A multislope for $M$ is {\em generic} if it belongs to the hyperbolic Dehn surgery space of $M$.
A multislope $\psi$  is {\em fully generic} if $\psi \in \Psi_{C_M}$. 
We further say a generic multislope for $M$ is {\em symmetry-breaking} if $g(\psi) \neq \psi$ for all non-trivial isometries $g \in Isom(M)$.
\end{dfn}

Together, Theorem~\ref{thm:fillinglengths} and Definition~\ref{dfn:symmetrybreak} immediately give the following.

\begin{cor}\label{cor:asymmetric}
If $\psi$ is a fully generic, symmetry-breaking multislope for the cusped hyperbolic $3$--manifold $M$, then the filling $M(\psi)$ is an asymmetric hyperbolic manifold. \qed
\end{cor}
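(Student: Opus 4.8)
The plan is to reduce asymmetry of the hyperbolic manifold $M(\psi)$ to triviality of its isometry group, and then to feed a hypothetical nontrivial isometry of $M(\psi)$ through Kojima's restriction map to contradict the symmetry-breaking hypothesis. By Mostow--Prasad rigidity, every self-homeomorphism of a finite-volume hyperbolic $3$--manifold is isotopic to an isometry, so ``$M(\psi)$ is asymmetric'' is equivalent to ``$\Isom(M(\psi))$ is trivial''. That $M(\psi)$ is in fact hyperbolic is guaranteed by Theorem~\ref{thm:fillinglengths}(1), since a fully generic multislope lies in $\Psi_{C_M} \subset \Psi_C$. It therefore suffices to show that an arbitrary $f \in \Isom(M(\psi))$ is the identity.

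First I would exploit the length hypothesis. Because $\psi$ is fully generic, $\psi \in \Psi_{C_M}$, so Theorem~\ref{thm:fillinglengths}(2) tells us that the cores $L$ of the filling are exactly the shortest geodesics of $M(\psi)$. Since an isometry preserves geodesic lengths, $f$ must permute this finite collection of shortest geodesics and hence preserve their union $L$ setwise. Consequently $f$ restricts to an isometry $\rest(f)$ of $M(\psi) \setminus L = M$, and Theorem~\ref{thm:fillinglengths}(2) records that $\rest \colon \Isom(M(\psi)) \hookrightarrow \Isom(M)$ is a monomorphism.

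Next I would translate ``$f$ extends over the Dehn filling'' into ``$\rest(f)$ fixes the multislope''. Since $f$ is an isometry of the entire filled manifold, it carries the filling solid tori $\nbhd(L)$ to filling solid tori, and hence carries their meridian disks to meridian disks. Reading this on $\bdry M$, the boundary slopes that bound disks in the fillings---namely the constituent slopes of $\psi$---are permuted among themselves by $\rest(f)$, so $\rest(f)(\psi) = \psi$. But by Definition~\ref{dfn:symmetrybreak} the multislope $\psi$ is symmetry-breaking, meaning $g(\psi) = \psi$ holds for no nontrivial $g \in \Isom(M)$; therefore $\rest(f)$ is the identity of $M$. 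Because $\rest$ is injective, $f$ is the identity of $M(\psi)$, and the conclusion follows.

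The step I expect to require the most care is the bookkeeping of the third paragraph: making precise that a \emph{global} isometry of the filled manifold necessarily respects the filling data after restriction. The subtlety is that $\rest(f)$ a priori only knows it is an isometry of $M$ that permutes the cusps corresponding to the filled tori; one must confirm that the identification of each core solid torus's meridian with the slope $\psi_j$ is preserved, using that $f$ is an isometry of $M(\psi)$ rather than merely a homeomorphism of $M$. Once this is in hand, the symmetry-breaking condition delivers the contradiction immediately.
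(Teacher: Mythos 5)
Your argument is correct and is exactly the intended unpacking of the paper's one-line justification: the paper simply asserts that Theorem~\ref{thm:fillinglengths} and Definition~\ref{dfn:symmetrybreak} ``immediately give'' the corollary, and your chain (fully generic $\Rightarrow$ hyperbolic with cores the shortest geodesics $\Rightarrow$ any isometry preserves the cores and restricts via Kojima's monomorphism to an isometry of $M$ fixing $\psi$ $\Rightarrow$ trivial by symmetry-breaking $\Rightarrow$ trivial upstairs by injectivity of $\rest$) is precisely that argument made explicit. No gaps; the extra care you flag about the restricted isometry preserving the filling slopes is handled correctly.
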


\subsection{Strong inversions and unbreakable symmetries}

A manifold  $M$ is {\em strongly invertible} at a collection of boundary tori $\mathcal{T}$ if there is an  orientation-preserving order $2$ self-homeomorphism (i.e., involution) of $M$ whose  fixed locus is non-empty and meets each component of $\mathcal{T}$.   A knot or link $L$ in the manifold $\widehat{M}$ is {\em strongly invertible}  if its exterior $M=\widehat{M}-\nbhd(L)$ is strongly invertible at the collection $\mathcal{T} =\bdry\nbhd(L)$.

An element of the symmetry group of $M$ is called a {\em strong inversion (at $\mathcal{T}$)} if it is represented by an orientation-preserving involution of the kind above.  Any strong inversion of $M$  at a torus component $T \subset \bdry M$ restricts to $T$ as the hyperelliptic involution and consequently extends to an orientation-preserving involution of any Dehn filling of $M$ along $T$. 
Thus the symmetry group of a manifold that contains a strong inversion at a collection of boundary tori cannot be reduced to the trivial group via any Dehn filling of those tori.

More generally, for a manifold $M$ and a collection of boundary tori $\mathcal{T}$, any symmetry of $M$ that fixes each $T \in \mathcal{T}$ set-wise and fixes the slopes on each $T \in \mathcal{T}$  will extend across any Dehn filling of $M$ along $\mathcal{T}$, so there can be no symmetry-breaking multislopes in $\mathcal{T}$.

\section{Some families of jointly primitive knots}\label{sec:families}
In this section we introduce a collection of families of framed knots in either lens spaces or connected sums of lens spaces through surgery descriptions on a common link.  These knots are either LJP or MJP and correspondingly have surgeries to either lens spaces or connected sums of lens spaces.

\subsection{Notational conventions} 
First, let $\hatQ = \Q \cup \{\tfrac10\}$ and $\hatZ = \Z \cup \{\tfrac10\}$ denote the sets of the rational numbers and the integers, respectively, each extended by $\tfrac10=\infty$.

We employ the convention that the lens space $L(p,q)$ is obtained from $S^3$ via  $(-p\slash q)$--surgery  on the unknot or by surgery on the linear chain link of $n$ components with integer framings $a_1, \dots, a_n$, where
\[-p/q = [a_1, \dots, a_n]=a_1-1/(a_2-1/(\dots -1/a_n))\] 
as in \cite{Saveliev}. 
We also let  $L[a_1, \dots, a_n]$ denote the lens space $L(p,q)$.

Extending this notation for more general surgeries on linear chain links, we allow $a_i=\infty$ in order to describe connected sums of lens spaces.  For example, $L[2,3,\infty,4,5,6] = L[2,3]\#L[4,5,6]$.   We also allow the initial and terminal coefficients to be rational numbers so that
\[L[[s_1, \dots, s_{n'}],a_1, \dots, a_n,[r_1, \dots, r_{n''}]] = L[s_{n'},  \dots, s_1, a_1, \dots, a_n, r_1, \dots, r_{n''}]\]
 for integers $a_1, \dots, a_n, s_1, \dots, s_{n'}, r_1, \dots, r_{n''}$;  
both versions describe the same lens space by Dehn surgeries on appropriate chain links with these coefficients.   This relationship is due to the surgery calculus ``slam dunk'' move; see, for example,  \cite[Section 5.3, Figure 5.30]{gompfStipsicz1994}.

Note that these extensions of notation do not generally hold for continued fraction expansions of rational numbers.  For example,  
$\frac{18}{11}=[2,3,4] \neq [[3,2],4]=\frac{9}{4}$.

\subsection{A surgery diagram}\label{sec:surgerydiagram}
Consider the surgery diagram of Figure~\ref{fig:surgerylink} and fix $b,m,r,s \in \hatQ$. Choosing $k\in \Z$ determines a  $1$--parameter family of framed knots $K_k(m,r,s,b)$ in the closed $3$--manifold $Y(m,r,s,b)$.  Figure~\ref{fig:Ymrsb} shows how a presentation of $Y(m,r,s,b)$ as surgery on a twisted $4$--chain link may be obtained from the surgery diagram in Figure~\ref{fig:surgerylink} when the trivial surgery is performed on $K$. Note that the choice of $k$ affects the framing on $K$, but not the ambient $3$--manifold. 

\begin{figure}[h!]
	\includegraphics[width=\textwidth]{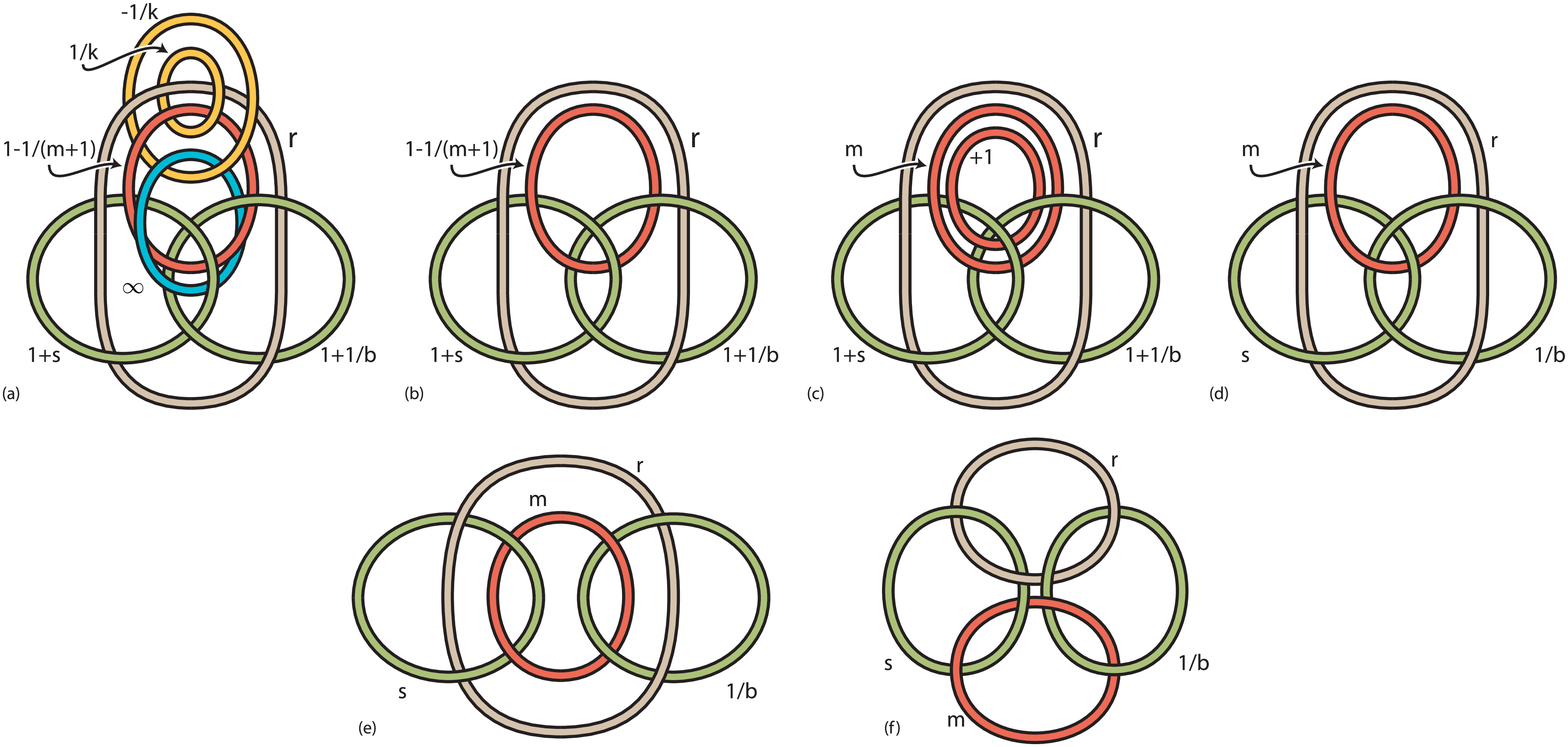}
	\caption{A simple surgery description of the ambient manifold $Y(m,r,s,b)$ is obtained from Figure~\ref{fig:surgerylink}.
	}
	\label{fig:Ymrsb}
\end{figure}
	 
Let $K_k(m,r,s,b)$ inherit  the $+1$--framing of $K\subset S^3$, and define $Y^*_k(m,r,s,b)$ to be the result of this framed surgery on $K_k(m,r,s,b)$.   It follows that 
\[Y(m,r,s,b) = S^3_{\calL}(1-\tfrac{1}{m+1},r,1+s, 1+\tfrac1b,\tfrac1k, -\tfrac1k)\] 
and $K_k(m,r,s,b)$ is the image of $K$ under this surgery, while $Y^*_k(m,r,s,b)$ is the result of a further $+1$--surgery on $K$ with surgery dual $K^*_k(m,r,s,b)$.

Let $C'_0$ and $C'_1$ be the cores of the surgeries on $C_0$ and $C_1$, respectively. Let $M(m,r) \subset Y(m,r,s,b)$ and  $M^*_k(m,r) \subset Y^*_k(m,r,s,b)$ be the exteriors of $C'_0 \cup C'_1$, i.e., the manifolds  with two torus boundary components  obtained by leaving $C_0$ and $C_1$ unfilled.   Let $K_k(m,r)$ be the corresponding framed knot in $M(m,r)$, and let $K^*_k(m,r)$ be the corresponding surgery dual framed knot in $M^*_k(m,r)$.  As shown in Figure~\ref{fig:masterlinksurface}, there is 
a twice-punctured torus $\Sigma$ bounded by $C_0 \cup C_1$ which contains $K$ as a $+1$--framed curve.   Hence we may regard $\Sigma$ as properly embedded in $M(m,r)$ and containing the framed knots $K_k(m,r)$.
We now observe that for certain choices of $m,r \in\hatQ$,
$\Sigma$  confers a jointly primitive presentation upon the knots $K_k(m,r)$ for $k\in\Z$.

\begin{lemma}\label{lem:jpcondition}
If $m=-1$ and $r \in \hatQ$, then $K_k(-1,r)$ is jointly primitive in $M(-1,r)$.   If  $m,r\in\Z$, then $K_k(m,r)$ is fibered jointly primitive in $M(m,r)$.
\end{lemma}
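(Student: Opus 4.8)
The plan is to reduce the statement to a combinatorial claim about the genus-$3$ handlebody $H = M(m,r)\setminus\nbhd(\Sigma)$ and its pair of impression curves $K_\pm$, and then to verify that claim by reading the surgeries on $J$ and $R$ off of Figure~\ref{fig:masterlinksurface}. Because $\Sigma$ is carried along by every surgery in the diagram, $K_k(m,r)$ lies in $\Sigma$ as a non-separating curve whose framing is the surface framing for every $k$; these two requirements of a jointly primitive presentation are therefore immediate. Moreover the $\tfrac1k,-\tfrac1k$ surgeries on $L_+\cup L_-$ act only by twisting $K$ (and the arc that $\Sigma$ makes) through their common annulus, leaving $C_0\cup C_1$, the surface $\Sigma$, the handlebody $H$, and the position of the impressions $K_\pm$ in $\bdry H$ unchanged. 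Hence it suffices to produce, once and uniformly in $k$, a disjoint pair of meridian disks $D_\pm$ of $H$ with $|K_i\cap\bdry D_j|=\delta_{ij}$; equivalently, to show that attaching $2$--handles to $H$ along $K_+$ and $K_-$ yields a solid torus whose two cocores form a trivial pair of arcs.

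Since $J$ and $R$ are disjoint from $\Sigma$ (and from the $L_\pm$--annulus), they lie in the base handlebody $H_0 = S^3\setminus\nbhd(C_0\cup C_1\cup\Sigma)$, and $H$ is obtained from $H_0$ by the prescribed surgeries on $J\cup R$. I would then split on the value of $m$. When $m=-1$ the slope $1-\tfrac{1}{m+1}=\tfrac10$ is trivial, so $J$ is erased and only the $r$--surgery on $R$ remains; the task becomes to locate $R$ precisely in $H_0$ relative to $K_\pm$ and to a spanning arc $a\subset\Sigma$ meeting $K$ once, and to show that for every $r\in\hatQ$ the surgered complement is again a genus-$3$ handlebody in which the two halves of $a$ complete to a disjoint pair of meridian disks, one dual to $K_+$ and one dual to $K_-$. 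The essential point is that the jointly primitive disks are \emph{produced} by the $R$--surgery rather than already present, and that the configuration is arranged so that this works for an arbitrary slope; this parallels the construction of the disks $D'_\pm$ in the proof of Theorem~\ref{thm:jpequivalence}.

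For the second assertion, with $m,r\in\Z$, I would upgrade the presentation to a fibered one. The surface $\Sigma$ is a twice-punctured-torus fiber for $C_0\cup C_1$, and the plan is to isotope each of $J$ and $R$ into a single fiber of this fibration so that the prescribed (integral) surgeries along them modify the monodromy by Dehn twists while preserving the fibration; this exhibits $\Sigma$ as a fiber of $M(m,r)$, making the presentation fibered, and the jointly primitive disks are recovered by running the construction of the previous paragraph in this fibered model. Establishing that the surgeries act as monodromy twists requires checking that $J$ and $R$ lie in the fiber with the framing for which the surgery slopes are integral.

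I expect the principal obstacle to be the explicit verification in the $m=-1$ case that the handlebody structure and the \emph{separated} pair of meridian disks persist for every slope $r\in\hatQ$, including the non-integral ones, since an arbitrary surgery can destroy handlebody-ness; this forces one to pin down the embedding of $R$ in $H_0$ very precisely and to argue uniformly in the slope. A secondary subtlety, in the fibered case, is the framing bookkeeping needed to reconcile the $S^3$--framed slope $1-\tfrac{1}{m+1}=\tfrac{m}{m+1}$ on $J$ with the requirement that the surgery be integral with respect to the \emph{fiber} framing, so that it preserves the fibration; confirming that $J$ sits in a fiber with the correct framing is where I would take the most care.
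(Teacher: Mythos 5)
Your proposal correctly isolates the crux --- showing that attaching $2$--handles to $H$ along one or both of the impressions $K_\pm$ yields a handlebody of the expected genus, uniformly in $r\in\hatQ$ --- but it stops exactly there, labelling this the ``principal obstacle'' without supplying a mechanism to overcome it. That verification \emph{is} the lemma; everything before it (that $K_k(m,r)$ lies in $\Sigma$ with the surface framing, that $J$ and $R$ live in $H_0$) is routine. The paper's proof supplies the missing mechanism via the Montesinos trick: quotienting by the involution that exchanges $C_0$ and $C_1$ turns $H$ into the double branched cover of $D^2\times I$ over a rational $4$--string tangle $\bar{\calB}_k(m,r)$, turns joint primitivity into the statement that capping off the impressions of the arc $\kappa$ (in any combination) again yields a rational tangle, and --- crucially --- makes the uniformity in $r$ and $k$ visible, because those parameters enter only as rational-tangle-filled balls that ride along through the explicit isotopies of Figure~\ref{fig:jointlyprimitiveexample}. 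Working directly upstairs, as you propose, one must control an arbitrary $p/q$--surgery on a knot inside a genus-$3$ handlebody, which is precisely what you concede you cannot do uniformly; without the branched-cover reduction (or some equivalent device) there is no proof.

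Two further points would need repair even if the main verification were supplied. First, the impressions $K_\pm$ are \emph{not} independent of $k$: the $(\tfrac1k,-\tfrac1k)$--surgeries on $L_\pm$ replace $K$ by its image under the $k$-th power of the Dehn twist along $\Sigma\cap A$ inside $\Sigma$, so the curves on $\bdry H$ genuinely change. What saves the reduction is that the annulus twist is a self-homeomorphism of the pair $(M(m,r),\Sigma)$, hence of $(H,\bdry H)$, carrying the impressions for $k=0$ to those for general $k$; joint primitivity is then transported by this homeomorphism rather than being ``unchanged.'' Second, your route to fiberedness --- isotoping $J$ and $R$ into fibers so the surgeries twist the monodromy --- requires each surgery coefficient to differ from the fiber framing by $1/n$; with coefficient $r$ on $R$ this can hold for at most finitely many integers $r$ once the fiber framing is fixed, so as stated it cannot cover all $m,r\in\Z$. (The correct bookkeeping involves the dual arc $R^*$ in the quotient, where integral parameters keep the branch locus a closed $4$--braid, whose double branched cover is automatically a $\Sigma$--bundle; this is how the paper obtains fiberedness.)
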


\begin{proof}
The proof of the lemma is an application of the Montesinos trick relating Dehn surgery on a link to rational tangle replacement in its double branched cover.  We begin by pushing the definition of jointly primitive through this correspondence before tackling the specific example.   

Suppose that the link $L\subset M$ admits an involution $\gamma$ whose fixed set intersects each component of $L$ in two points.  In the quotient, the link components become arcs with endpoints on the branch locus.  A surface in the quotient containing one of these arcs will lift to a surface $\Sigma\subset M$ containing the corresponding component $L_i$.  When attaching $2$--handles to the images of $L_i$ in $M-\nbhd(\Sigma)$ yields a handlebody, this surface confers a jointly primitive presentation, and we may evaluate this via the corresponding operations in the quotient.

Recall that a rational $n$--string tangle is any $1$--manifold properly embedded in the  $3$--ball that is homeomorphic to $n$ vertical arcs in the product $D^2\times I$.  The double branched cover of a rational $n$--string tangle is a genus $n-1$ handlebody.  Note, too, that in the double branched cover, attaching a cap to a tangle along an arc in the boundary corresponds to attaching a $2$--handle along the closed curve that is the lift of the arc.  Hence we may say a collection of arcs in the boundary of a rational $n$--string tangle is \emph{jointly primitive} if attaching caps along any subset of them produces a rational tangle.

\begin{figure}[h!]
\includegraphics[width=\textwidth]{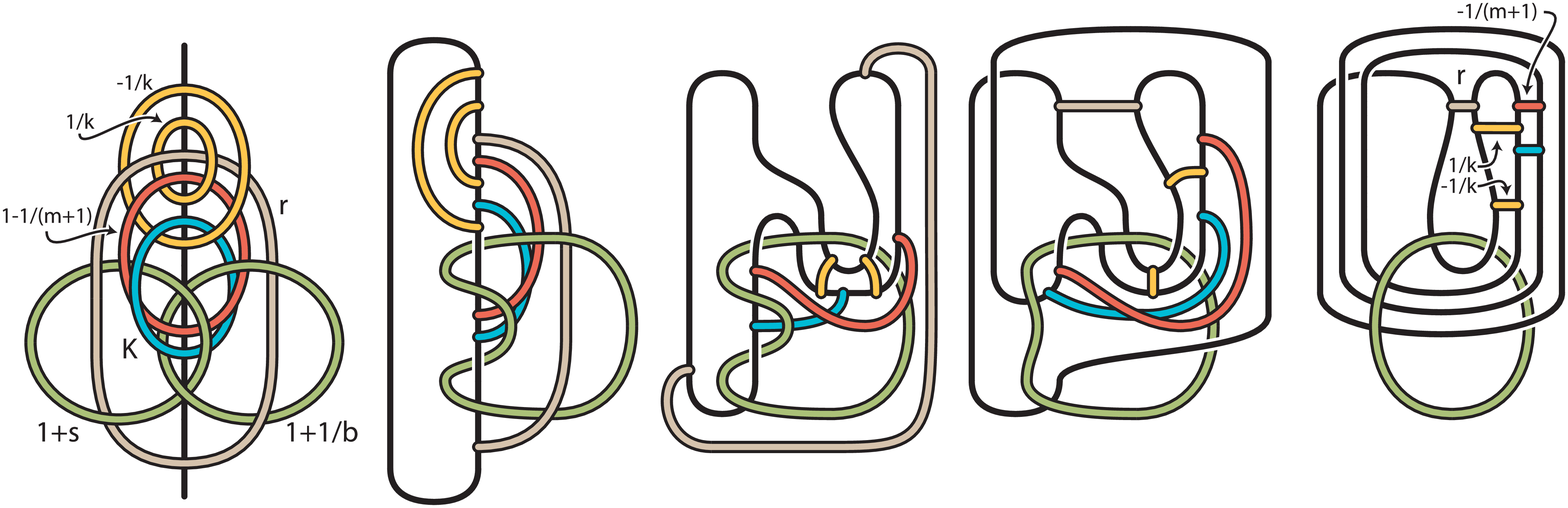}
\caption{After taking the quotient of  $K \cup \calL$ of Figure~\ref{fig:surgerylink} by an involution exchanging $C_0$ and $C_1$, the figure evolves by isotopies  and rational tangle replacement. Note that in the final isotopy, the framings of the blue and red arcs change by $1$.  
}
\label{fig:augmentedinvolution}
\end{figure}

With this definition in hand, we turn to the link $K \cup \calL$ of Figure~\ref{fig:surgerylink}, which is shown again in Figure~\ref{fig:augmentedinvolution} together with an axis of involution.  Moving left to right, the second picture  shows its quotient by the involution.   The image of the fixed set becomes the branch locus, and the quotient identifies the components $C_0$ and $C_1$ into a single unknot $C$; the other four components of $K \cup \calL$ descend to arcs in the quotient. Continuing to the right, the subsequent pictures describe a sequence of isotopies. Note that the last isotopy changes the framings on the arcs of $K$ and $J$ by $1$.  Figure~\ref{fig:augmentedinvolution2} depicts straightening out the last image of Figure~\ref{fig:augmentedinvolution} and then performing a rational tangle replacement on the image of $R$. This  transforms the branch locus $\calB$  into a $4$--string braid $\calB^*$  with braid axis $C$.   The rational tangle replacement introduced a new arc $R^*$ (dual to $R$) that is level with respect to $C$, as are the arcs coming from $K$, $L_+$, $L_-$, and $J$.  We therefore use standard parametrization of slopes for rational tangle replacements on these level arcs.   

The exterior of $C$ is a solid torus containing the link $\calB_k(m,r)$, where the parameters $m$ and $r$ designate the appropriate rational tangle replacements on the arcs of $L_+$, $L_-$,  $J$, and $R^*$.  The double cover of the solid torus branched over $\calB_k(m,r)$ is the manifold $M(m,r)$ which contains the knot $K_k(m,r)$.    Observe that $C$ bounds a disk $D$ which intersects $\calB_k(m,r)$ four times; this disk lifts to the surface $\Sigma$ containing $K_k(m,r)$.

\begin{figure}
\includegraphics[height=2.5in]{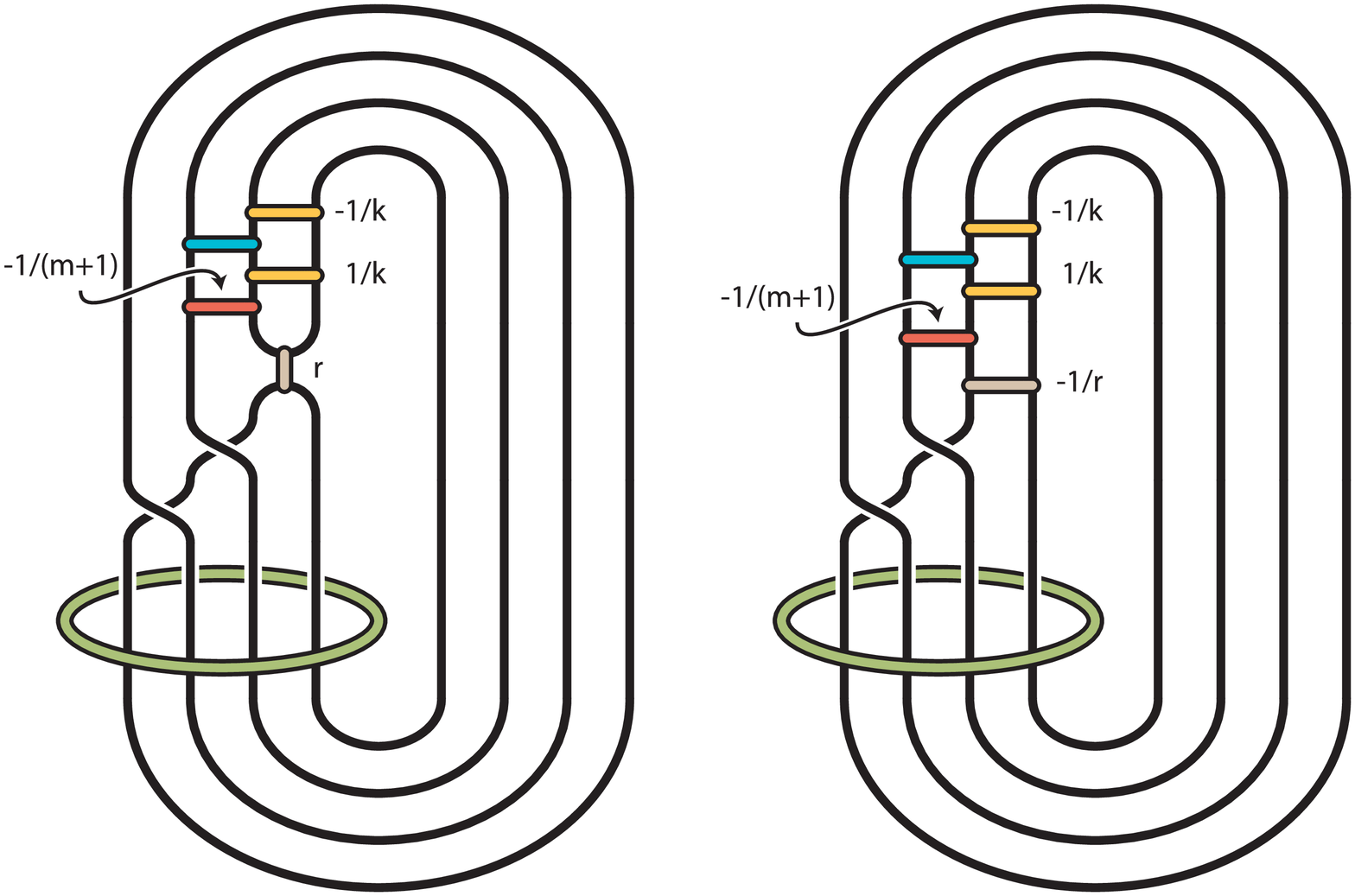}
\caption{(Left) The final image of Figure~\ref{fig:augmentedinvolution} is straightened out so that the branch locus is nearly braided. (Right) A rational tangle replacement exchanges the vertical arc with slope $r$ with a horizontal arc with slope $-\tfrac1r$.  This results in a branch locus that is presented as a closed $4$--string braid about the green braid axis $C$.  }
\label{fig:augmentedinvolution2}
\end{figure}

By construction, $H$ is the double branched cover of the ball $S^3-\nbhd(C \cup D) \cong D \times I$ branched over $\bar{\calB}_k(m,r) = \calB_k(m,r) \cap D\times I$ and the arc $\kappa$ leaves two impressions in $\bdry (D\times I)$.    This is shown for $m=-1$ and $r\in\hatQ$ in Figure~\ref{fig:jointlyprimitiveexample}(a).    We examine this case explicitly and leave the even simpler case $m,r \in \Z$ to the reader.  

\begin{figure}[h!]
\includegraphics[width=\textwidth]{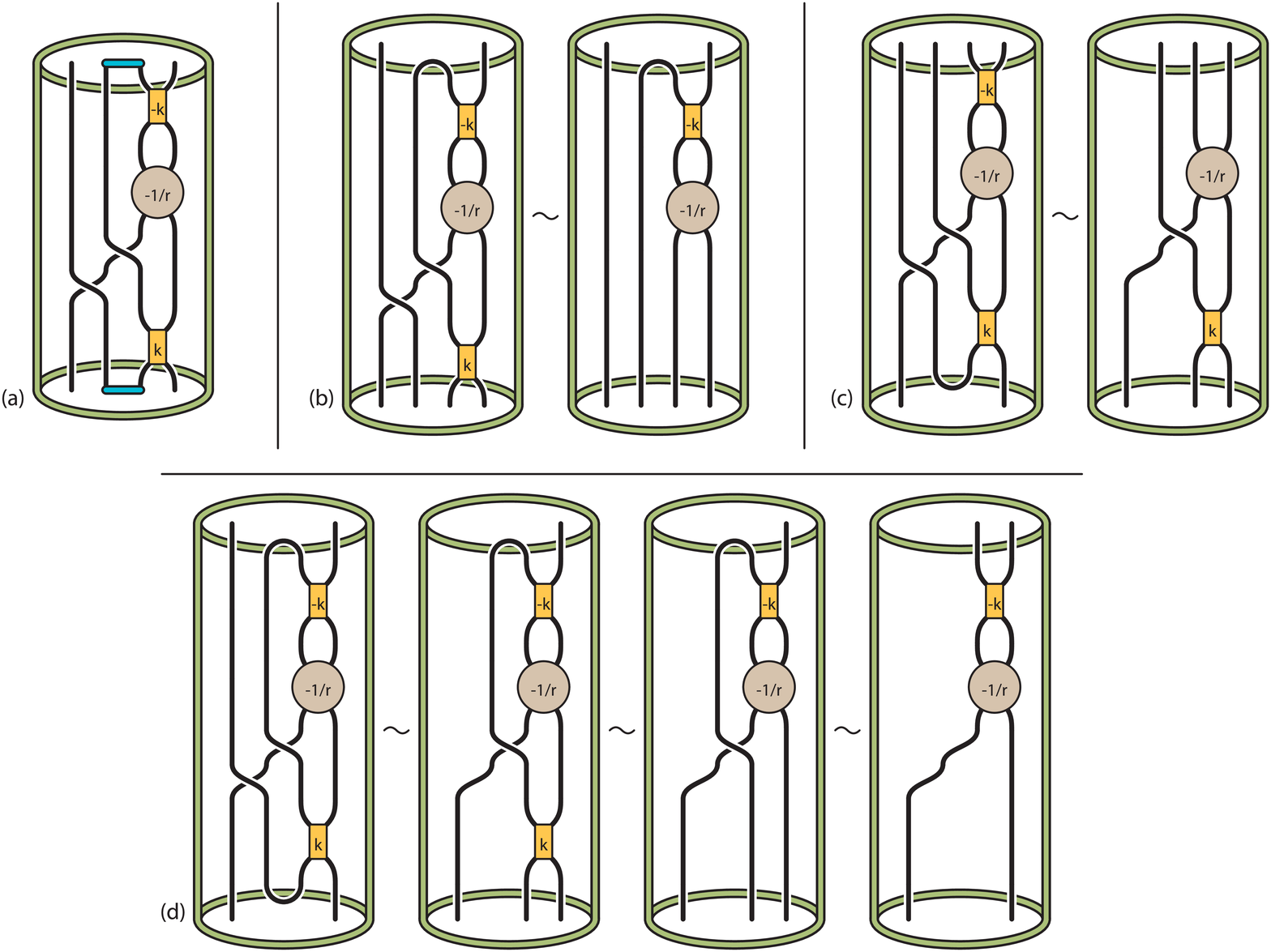}
\caption{(a) The rational tangle $\bar{\calB}_k(-1,r)$ and the rational tangles obtained by attaching caps along (b) one, (c) the other, or (d) both impressions $\kappa^+$ and $\kappa^-$ of $\kappa$.}
\label{fig:jointlyprimitiveexample}
\end{figure}
 
Since $\bar{\calB}_k(m,r)$ is a rational $4$--string tangle,  $H$ is indeed a genus $3$ handlebody.    In Figure~\ref{fig:jointlyprimitiveexample}, the label $-\tfrac1r$ on a ball indicates that it should be filled with a rational tangle of the given slope.  The rectangles indicate vertical compositions of half twists  whose number and handedness are dictated by the label.

Figures~\ref{fig:jointlyprimitiveexample}(b) and (c) each show a cap attached to $\bar{\calB}_k(m,r)$ along one impression of $K$, followed by an isotopy of the tangle into a form that is more clearly a rational $3$--string tangle. Figure~\ref{fig:jointlyprimitiveexample}(d) shows caps attached to $\bar{\calB}_k(m,r)$ along both impressions of $K$ followed by a sequence of isotopies until it is clear that the result is a rational $2$--string tangle. It follows from the general discussion at the beginning of the proof that $\Sigma$ gives a jointly primitive presentation of $K_k(m,r)$ in $M(m,r)$.
\end{proof}

\begin{lemma}\label{lem:cables}
For $m,r \in \Z$, the manifold $M_k^*(m,r)$ is the cable space $A([1,k+1,r+1,m+1,-k])$.   
	
For $m=-1$ and $r \in \hatQ$, the manifold $M_k^*(-1,r)$ is the cable space $A([1,k+1,r-k+1])$.	
\end{lemma}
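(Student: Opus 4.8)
The plan is to pick up the Montesinos--trick analysis from the proof of Lemma~\ref{lem:jpcondition}. There we saw that $M(m,r)$ is the double branched cover of the solid torus $S^3-\nbhd(C)$ over the closed $4$--string braid $\calB_k(m,r)$, with the braid--axis disk $D$ lifting to $\Sigma$ and the level arc $\kappa$ (the image of $K$) lifting to $K_k(m,r)$. Since $K_k(m,r)$ is jointly primitive in $M(m,r)$ and $\bdry M(m,r)$ consists of two tori, Lemma~\ref{lem:jptocablespace} already guarantees that the surface--framed $(+1)$--surgery producing $M^*_k(m,r)$ yields a cable space; the content of the lemma is therefore the \emph{identification of its slope}. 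To that end I would translate the surface--framed surgery, via the Montesinos trick, into a rational tangle replacement on the level arc $\kappa$. As in Figure~\ref{fig:jointlyprimitiveexample}(d), capping the two impressions of $\kappa$ reduces the rational $4$--string pattern to a rational $2$--string pattern; in the closed picture this collapses $\calB_k(m,r)$ to a closed $2$--string braid $\calB^*$ in the solid torus, and $M^*_k(m,r)$ is the double branched cover over $\calB^*$. This independently re-confirms that $M^*_k(m,r)$ is Seifert fibered over the annulus with a single exceptional fiber.

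The substance is then reading off the Seifert slope. I would track the stacked twist regions of $\calB^*$---those descending from the arcs dual to $R$, from $L_+$, $L_-$, $J$, and from the cap on $\kappa$---and invoke the standard correspondence between rational tangle replacements on level arcs and chain--link surgeries in the double branched cover: each level twist region lifts to a component of a linear chain link, and the tangle slopes assemble into a continued fraction. The individual slopes are dictated by the surgery parameters ($\tfrac1k$ and $-\tfrac1k$ on $L_\pm$, $r$ on $R$, and $1-\tfrac1{m+1}$ on $J$), with the $+1$ shifts appearing in the entries $k+1,\,r+1,\,m+1$ produced by the framing changes recorded in the final isotopy of Figure~\ref{fig:augmentedinvolution}. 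Matching this chain--link surgery to the convention defining $A([\,\cdot\,])$ then yields $M^*_k(m,r)=A([1,k+1,r+1,m+1,-k])$ in the integral case $m,r\in\Z$.

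For the second statement I would set $m=-1$, so that the fourth continued--fraction entry is $m+1=0$. Using the continued--fraction identity $[\dots,a,0,b,\dots]=[\dots,a+b,\dots]$ (the standard removal of a zero entry by summing its neighbors), the tail $r+1,0,-k$ collapses to $r-k+1$, giving $A([1,k+1,r-k+1])$. Because the $m=-1$ case of Lemma~\ref{lem:jpcondition} was carried out directly for all $r\in\hatQ$---the tangle on $R^*$ being the rational tangle of slope $-\tfrac1r$---the same branched--cover computation goes through verbatim with a rational terminal entry, which is precisely what the extended chain--link notation permits.

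The main obstacle is the bookkeeping of the second paragraph: pinning down the exact entries requires fixing consistent orientation and framing conventions throughout the isotopy sequence of Figures~\ref{fig:augmentedinvolution}--\ref{fig:augmentedinvolution2}, correctly converting each level tangle slope into its lifted chain--link framing (including the uniform $+1$ shifts and the sign of the $-k$ entry), and confirming that the resulting chain--link surgery is the one defining $A([\,\cdot\,])$ rather than a presentation differing by slam--dunks or orientation reversal. Remark~\ref{rem:cabletorus} provides one consistency check, and the requirement that specializing the five--term answer to $m=-1$ reproduce the three--term answer furnishes an internal check guarding against sign errors in the $L_\pm$ and $J$ contributions.
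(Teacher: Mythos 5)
Your proposal follows essentially the same route as the paper: the Montesinos-trick quotient from Lemma~\ref{lem:jpcondition}, the $0$--tangle replacement on the arc $\kappa$ reducing the branch locus to two parallel core circles (whose double branched cover of the solid torus is $T^2\times I$), and the assembly of the remaining tangle slopes into a continued fraction identifying the cable space --- the paper simply coalesces the replacements into a single arc of slope $[1,k,-1,r,m+1,-k]$ before lifting, which agrees with your chain-link lift after slam-dunks. The one caution is that the five-term formula is established only for $m,r\in\Z$, so the $m=-1$, $r\in\hatQ$ case cannot be obtained purely by specializing it and removing the zero entry; your backup observation --- that with $m=-1$ the rational entry lands in terminal position so the coalescing still goes through --- is exactly how the paper handles it, via a separate reduction to the slope $[1,k,-1,r-k]$.
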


\begin{proof}
Figure~\ref{fig:cablespsurg}(a) shows a tangle surgery description of a link in the solid torus exterior of the green unknot $C$ whose double branched cover is $M_k^*(m,r)$.  Figures~\ref{fig:cablespsurg}(b) and (c) manipulate this expression.  
\begin{figure}[h!]
	\includegraphics[width=\textwidth]{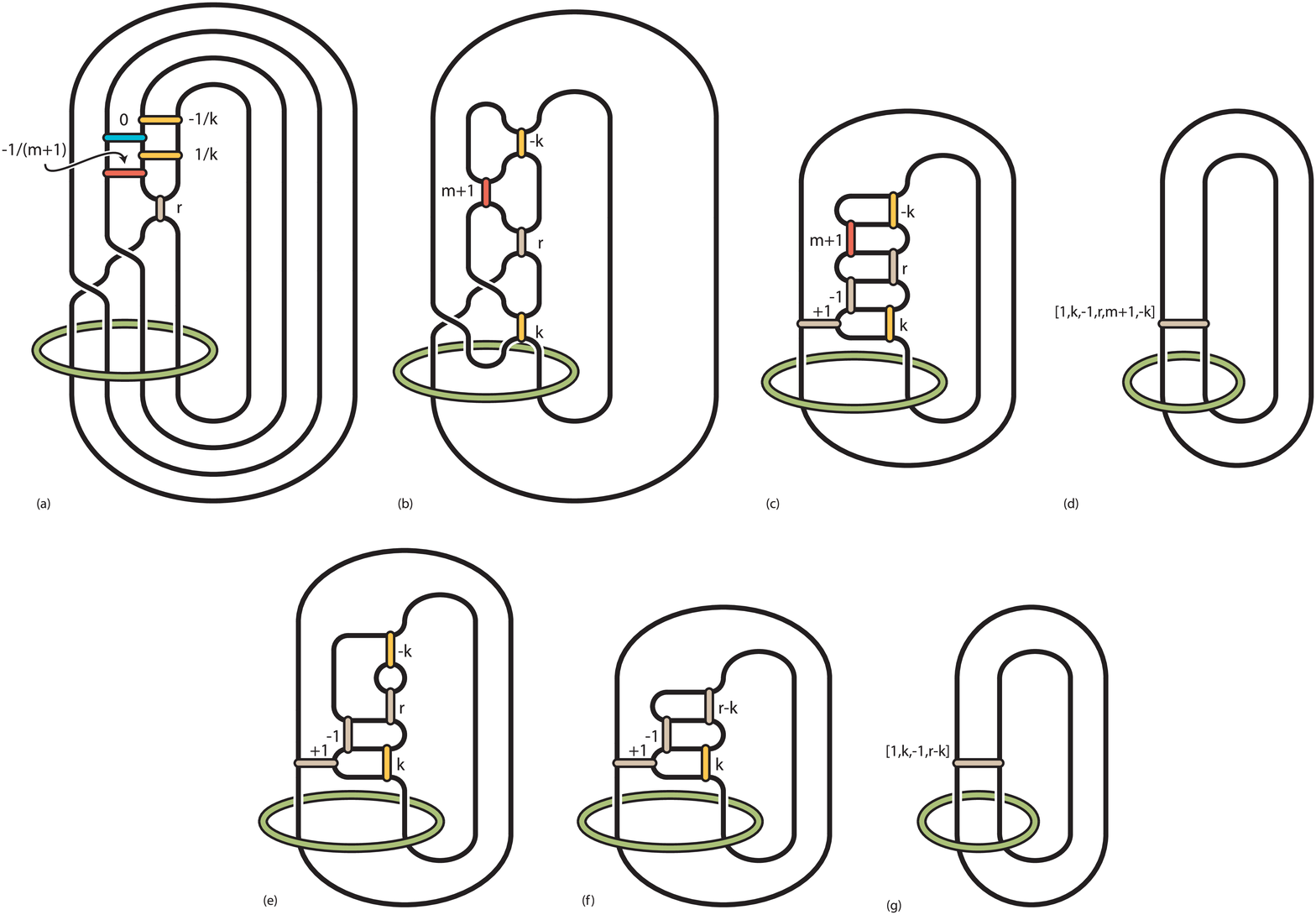}
	\caption{Figure~\ref{fig:augmentedinvolution2}(Left) is show with the slope $0$ on the arc corresponding to $K$ in (a).  The $0$ tangle replacement is performed along with the re-expression of the $-1/n$ arcs as $n$ arcs is given in (b) after an isotopy.  In (c), the two crossings are then presented as arcs with slopes $+1$ and $-1$.  Assuming that $m,r \in \Z$, in (d) we coalesce these tangle replacements into a single replacement.  Alternatively, (e) is obtained from (c) by setting $m=-1$ and performing the ensuing $0$ tangle replacement.  Since $k\in \Z$, we combine the arcs with slopes $-k$ and $r$ in (f). The arcs are further combined into a single replacement in (g).}
	\label{fig:cablespsurg}
\end{figure}

When $m,r\in \mathbb{Z}$, this becomes single arc with slope $[1,k,-1,r,m+1,-k]$ as shown in Figure~\ref{fig:cablespsurg}(d).  Using the calculus of continued fractions, this slope may be simplified to $[1,k+1,r+1,m+1,-k]$.
As the double cover of the solid torus $S^1 \times D^2$ branched over two parallel copies of the core curve is $S^1 \times (S^1 \times [-1,1])$, the arc shown in Figure~\ref{fig:cablespsurg}(d) lifts to a circle $\theta \times (S^1 \times 0)$ for some point $\theta\in S^1$.
Hence   $M_k^*(m,r) =A([1,k+1,r+1,m+1,-k])$.  
	
When $m=-1$ and $r\in\hatQ$,  Figures~\ref{fig:cablespsurg}(e),(f),(g) reduce Figure~\ref{fig:cablespsurg}(c) to  Figure~\ref{fig:cablespsurg}(d), except that  the arc now has slope $[1,k,-1,r-k]$.  This slope simplifies to $[1,k+1,r-k+1]$.  Hence $M_k^*(-1,r) = A([1,k+1,r-k+1])$.
\end{proof}

\begin{lemma}\label{lem:ljpandmjp}
Assume either $m=-1$ and $r \in \hatQ$  or $m,r \in \Z$.  If $b \in \Z$, then $K_k(m,r,s,b)$ is LJP and $Y^*_k(m,r,s,b)$ is a lens space.  If $b=\infty$, then $K_k(m,r,s,\infty)$ is MJP and $Y^*_k(m,r,s,\infty)$ is a connected sum of lens spaces.
\end{lemma}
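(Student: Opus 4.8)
The plan is to combine the joint primitivity established in Lemma~\ref{lem:jpcondition} with the cable-space identification of Lemma~\ref{lem:cables}, and then read off the dichotomy from how the resulting spanning annulus meets the two filling tori, exactly as in the discussion following Lemma~\ref{lem:jptocablespace}. Under either hypothesis on $m,r$, Lemma~\ref{lem:jpcondition} gives that $K_k(m,r)$ is jointly primitive in $M(m,r)$ with respect to $\Sigma$, so by Lemma~\ref{lem:jptocablespace} the surface-framed surgery on $K_k(m,r)$ produces a cable space $M^*_k(m,r)$---explicitly the one named in Lemma~\ref{lem:cables}---in which $\Sigma$ becomes a spanning annulus $\Sigma^*$. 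Since this surgery is supported in the interior of $M(m,r)$, the boundary curves $\bdry\Sigma^*\cap\bdry\nbhd(C_i)$ agree with $\bdry\Sigma\cap\bdry\nbhd(C_i)$, and each is a regular Seifert fiber of the cable space.

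Next I would identify the two fillings that recover $Y^*_k(m,r,s,b)$ from $M^*_k(m,r)$. Reading the surgery coefficients from Section~\ref{sec:surgerydiagram}, this manifold is obtained by filling $\bdry\nbhd(C_0)$ along slope $1+s$ and $\bdry\nbhd(C_1)$ along slope $1+\tfrac1b$. The geometric input I need is that $\Sigma$ induces the $+1$ surface framing on $C_1$ (visible in Figure~\ref{fig:masterlinksurface}, the same framing that realizes $K\subset\Sigma$ as a $+1$-framed curve); equivalently, in the basis of $H_1(\bdry\nbhd(C_1))$ given by the meridian $\mu_1$ and the surface longitude $\lambda_1=\bdry\Sigma\cap\bdry\nbhd(C_1)$, the coefficient $1+\tfrac1b$ records the relative slope $\tfrac1b$. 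This rewriting is the crux of the argument and the one place careful bookkeeping of framings is genuinely required; indeed, a direct computation of the geometric intersection number $\Delta(\lambda_1,1+\tfrac1b)$ shows it is constant in $b$ precisely because the surface framing equals $+1$, so I expect this framing identification, rather than any later step, to be the main obstacle.

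Granting this, the classification is immediate. For $b\in\Z$ the $C_1$-filling slope meets the fiber $\lambda_1$ exactly once, so $\bdry\Sigma$ meets $\bdry\nbhd(C_1)$ in a longitudinal curve; hence $K_k(m,r,s,b)$ is LJP by definition. Moreover such a filling of the Seifert-fibered cable space caps off the corresponding boundary of its base annulus, leaving a Seifert fibered solid torus, and the remaining $(1+s)$-filling at $C_0$ is then a lens space. For $b=\infty$ the relative slope $\tfrac1b$ degenerates to $0$, i.e.\ to $\lambda_1$ itself, so $\bdry\Sigma$ meets $\bdry\nbhd(C_1)$ meridionally; thus $K_k(m,r,s,\infty)$ is MJP, filling along the fiber produces a reducible manifold, and the $(1+s)$-filling at $C_0$ completes a connected sum of lens spaces. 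Both conclusions are instances of the trichotomy recorded after Lemma~\ref{lem:jptocablespace}.

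Finally I would address the order of the cable space. The continued-fraction expressions in Lemma~\ref{lem:cables} show the order may degenerate to $0$ or $1$ for special parameter values, so the cited trichotomy, which presumes order at least $2$, does not apply verbatim. However, the Seifert-fibered (spanning annulus) structure persists regardless of order: a filling meeting the fiber once always yields a manifold fibered over a disk, hence a solid torus, while filling along the fiber always yields a reducible manifold. Consequently the lens-space and connected-sum conclusions hold in every case, with the understanding that degenerate summands such as $S^3$ or $S^1\times S^2$ may occur; these are dispatched by the same reasoning or by direct inspection of the diagram.
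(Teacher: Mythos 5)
Your proof is correct and follows essentially the same route as the paper's: Lemma~\ref{lem:jpcondition} supplies the jointly primitive presentation, and the observation that $\bdry \Sigma$ has surface slope $+1$ on $C_1$ --- so that the filling slope $1+\tfrac1b$ has distance $1$ from $\bdry\Sigma$ for $b\in\Z$ (longitudinal) and distance $0$ for $b=\infty$ (meridional) --- yields the LJP/MJP dichotomy and the corresponding lens space or connected-sum conclusion. Your closing remarks on the degenerate cable-space orders are a welcome refinement that the paper's proof leaves implicit.
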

\begin{proof}

By assumption, Lemma~\ref{lem:jpcondition} implies that $K_k(m,r)$ has a jointly primitive presentation on $\Sigma$ in $M(m,r)$.  Hence for any filling of $C_0'$ and $C_1'$, $K_k(m,r,s,b)$ has a jointly primitive presentation in $M(m,r,s,b)$.

Assume $b \in \Z$.
Since $\bdry\Sigma$ meets $C_0$ and $C_1$ with slope $+1$, any curve of slope $1+\tfrac1b$ on $C_1$ has distance $1$ from $\bdry \Sigma$.  Thus when $C'_1$ is surgery dual to $(1+\tfrac1b)$--surgery on $C_1$, $\bdry \Sigma$ will be a longitude of  $\nbhd(C'_1)$.  Hence $K_k(m,r,s,b)$ is longitudinally jointly primitive and $Y^*_k(m,r,s,b)$ is a lens space.

Similarly, when $C'_1$ is surgery dual to $1$--surgery on $C_1$, $\bdry \Sigma$ will be a meridian of $\nbhd(C'_1)$.  Note that if $b=\infty$, then $1+\tfrac1b = 1$.  Hence $K_k(m,r,s,\infty)$ is meridionally  jointly primitive and $Y^*_k(m,r,s,\infty)$ is a connected sum of lens spaces. 
\end{proof}

\subsection{Hyperbolicity and symmetries of $K_k(m,r,s,b)$}\label{sec:hypandsym}

In this section, 
we show that certain families of manifolds  are  hyperbolic and that some of them admit strong inversions, while others are (generically) asymmetric.  Our first lemma establishes a criterion for the existence of a strong inversion; however, showing that a manifold is asymmetric generally involves some computation. In service of this computation, we first use SnapPy to determine the hyperbolicity and symmetry group of the multi-cusped manifolds that each family limits to. 
  The relevant symmetry group computation follows from an analysis of the symmetries of the canonical cell decomposition of the multi-cusped manifold.  For the manifolds of interest, the canonical cell decomposition is often an ideal triangulation; in such cases we refer to it as the {\em canonical triangulation}. In the cases where the canonical cell decomposition has non-tetrahedral cells, SnapPy builds a triangulation with finite vertices  via a natural subdivision of the canonical cell decomposition. This triangulation is canonically defined and its combinatorial symmetries are in fact the full symmetry group of the manifold. We will call this triangulation the {\em canonical triangulation with finite vertices}. Moreover, SnapPy, when run as a module inside of Sage, has the functionality to rigorously compute either the canonical cell decomposition or the canonical triangulation with finite vertices for a given cusped hyperbolic 3-manifold $M$.

In the examples listed here, the isomorphism signature also includes framing data (which agrees with the figures and conventions of this paper), so the interested reader can observe our computations in SnapPy (as a Sage module) if they so desire. 
 
Thereafter we appeal to the discussion of symmetries and symmetry breaking from \S\ref{sect:symm_and_Dehn_surgery} to show that when the free parameters in each family are chosen to have suitably large magnitude, the corresponding multislope used to fill the limiting manifold is fully generic and symmetry-breaking.

\begin{figure}
	\includegraphics[width=\textwidth]{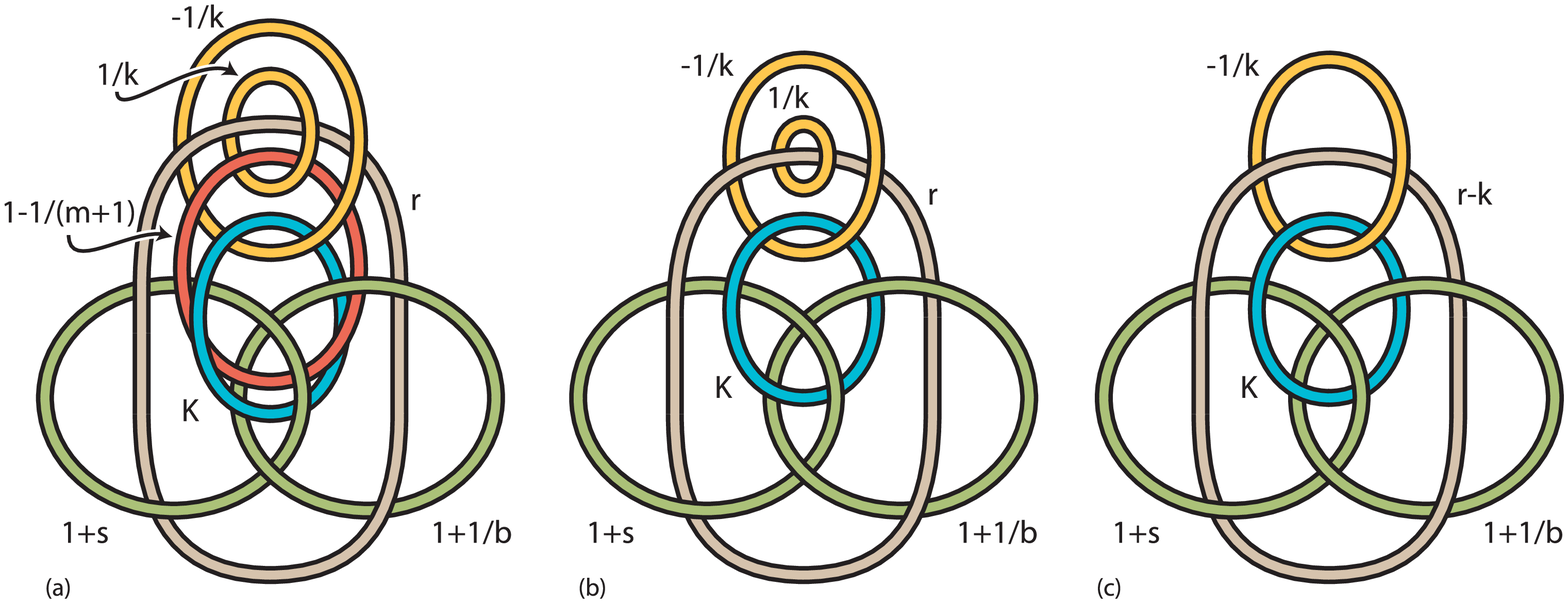}
	\caption{
	Setting $m=-1$ in (a) makes the component $J$ of $\calL$ have the surgery coefficient $\infty$.  Performing this filling causes $L_+$ to be a meridian of the component $R$, so that $K \cup \calL$ becomes the toroidal link in (b).  However since $L_+$ has surgery coefficient $1/k$, we may use Rolfsen twists to ``absorb'' $L_+$ into $R$ so that the resulting link in (c) is hyperbolic (in fact, it is L12n2253).  {Continuing to use the labeling of the components from the original link $K \cup \calL$, this is the $5$--component link $K \cup R \cup C_0 \cup C_1 \cup L_-$.}  With its surgery coefficients, it describes the framed knot  $K_k(-1,r,s,b)$. } 
	\label{fig:m=-1}
\end{figure}

\begin{lemma}\label{lem:strinv}
	$K_k(m,r,s,b)$ is strongly invertible if  $s\in\{0,-1,-2,  \infty\}$ or $b\in\{\infty,-1,-1/2, 0\}$. 
\end{lemma}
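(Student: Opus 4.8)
The plan is to realize every required strong inversion through \emph{equivariant Dehn surgery}: it suffices to produce, for the stated parameter values, an orientation--preserving involution of the surgery link $K \cup \calL \subset S^3$ whose fixed set meets $K$ in two points and which is compatible with the surgery coefficients, since such an involution descends to an involution of $Y(m,r,s,b)$ restricting to a strong inversion on the image $K_k(m,r,s,b)$. My starting point is the involution $\gamma$ from the proof of Lemma~\ref{lem:jpcondition} (the axis in Figure~\ref{fig:augmentedinvolution}): $\gamma$ is already a strong inversion on $K$, it is compatible with the surgeries on $J$ and $R$ and with the $\pm\tfrac1k$--surgeries on $L_+ \cup L_-$, and it exchanges $C_0 \leftrightarrow C_1$. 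Because $\gamma$ swaps the two components carrying the parameters $s$ and $b$, it furnishes an orientation--preserving homeomorphism $Y(m,r,s,b) \cong Y(m,r,\tfrac1b,\tfrac1s)$ carrying $K_k(m,r,s,b)$ to $K_k(m,r,\tfrac1b,\tfrac1s)$. Since $x \mapsto \tfrac1x$ carries $\{0,-1,-2,\infty\}$ to $\{\infty,-1,-\tfrac12,0\}$, this symmetry interchanges the two displayed hypotheses; hence it is enough to treat the case $s \in \{0,-1,-2,\infty\}$ and then deduce the case $b \in \{\infty,-1,-\tfrac12,0\}$ formally.

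For the case $s \in \{0,-1,-2,\infty\}$ the involution $\gamma$ itself need not descend (the fillings of $C_0$ and $C_1$ need not match), so I instead seek a \emph{second} involution $\delta$ that fixes $C_1$ setwise --- hence is compatible with the $(1+\tfrac1b)$--surgery for \emph{every} $b$ --- while still acting as a strong inversion on $K$. Concretely, perform the $(1+s)$--filling on $C_0$ first. The key observation is that $1+s \in \{1,0,-1,\infty\}$ are exactly the four slopes on $\bdry \nbhd(C_0)$ which, measured against the surface framing $\bdry\Sigma$ (slope $+1$) and the meridian, are preserved as a set by the reflective symmetry of the twice-punctured torus $\Sigma$ that fixes each of $\sigma_0,\sigma_1$ and interchanges the two impressions $K_+ \leftrightarrow K_-$ of $K$. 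This surface symmetry reverses $K$ (so it is a strong inversion) and fixes $C_1$; for precisely these four slopes on $C_0$ it extends across the filling solid torus, and one checks that it also respects the annulus cobounded by $L_+ \cup L_-$ (which meets $\Sigma$ in a single fiber that $K$ crosses once) together with the curves $J,R$ disjoint from $\Sigma$. This $\delta$ is the desired strong inversion, and because it fixes $C_1$ it descends for every $b$.

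The main obstacle is verifying that $\delta$ is genuinely available --- and only available --- for the four slopes $1+s \in \{1,0,-1,\infty\}$, and that it is \emph{simultaneously} compatible with the $\pm\tfrac1k$--surgery on $L_+ \cup L_-$; this is where the interaction of $K$, the spanning annulus of $L_+ \cup L_-$, and the special $C_0$--filling must be examined, most cleanly by passing to the quotient tangle and branched--braid pictures of Figures~\ref{fig:augmentedinvolution} and \ref{fig:augmentedinvolution2} and exhibiting the extra symmetry there. I would do the cases $s=\infty$ (where $C_0$ is simply erased) and $s=0$ (filling along the surface slope $\sigma_0$) first as a template, then obtain $s=-1$ and $s=-2$ (slopes $0$ and $-1$) by the same mechanism after a Rolfsen twist normalizes the filling, confirming orientation--preservation and the nonemptiness of $\Fix(\delta) \cap \nbhd(K)$ along the way.
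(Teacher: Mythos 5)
Your opening reduction is exactly the one the paper uses: the rotation of $K\cup\calL$ exchanging $C_0$ and $C_1$ (and strong on the remaining components) gives $K_k(m,r,s,b)=K_k(m,r,\tfrac1b,\tfrac1s)$, so the $b$--cases follow formally from the $s$--cases. The gap is in the second half, which is the actual content of the lemma. You never construct the involution $\delta$ for $s\in\{0,-1,-2,\infty\}$; you describe what it ought to do and then explicitly defer ``the main obstacle'' of checking that it exists, that it extends over the $(1+s)$--filling of $C_0$, and that it is compatible with the annulus cobounded by $L_+\cup L_-$ and with $J$ and $R$. The paper carries out exactly this verification concretely: for $s=\infty$ it erases $C_0$ and repositions the link so an axis of involution is visible, and for each of $s=0,-1,-2$ it performs handleslides of the other components over the filled $C_0$ followed by isotopies (Figure~\ref{fig:stronglyinvertiblesurgeries}) until an involution appears. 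Notably the resulting involutions differ from case to case --- strong on $C_1$ for $s=-1,-2$, but only \emph{free} on $C_1$ for $s=0$ --- which already signals that no single symmetry handles all four values.

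Beyond being incomplete, the uniform mechanism you propose is structurally suspect. An orientation-preserving involution of the link exterior fixing $\bdry\nbhd(C_0)$ setwise acts on $H_1$ of that boundary torus either as $\pm I$ (preserving every slope, hence extending over \emph{every} filling --- incompatible with the generic asymmetry established in Lemmas~\ref{lem:bulksymmetry}--\ref{lem:bulk2symmetry}) or as a reflection-type matrix preserving exactly two slopes. So no single ``reflective symmetry of $\Sigma$'' can extend across the filling solid torus for \emph{precisely} the four slopes $1+s\in\{1,0,-1,\infty\}$; at least two genuinely different involutions are required, and each must actually be exhibited together with its compatibility with the $\pm\tfrac1k$--surgeries on $L_\pm$. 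That case-by-case construction is precisely what the paper's handleslide argument supplies and what your proposal leaves open.
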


\begin{proof}
	Since the rotation of $\calL \cup K$ along the vertical axis in Figure~\ref{fig:surgerylink}  exchanges $C_0$ and $C_1$ and is strong on the remaining components, we observe that $K_k(m,r,s,b) = K_k(m,r,1/b,1/s)$.  Furthermore $K_k(m,r,s,b)$ is strongly invertible for $s \in \{0,-1,-2, \infty\}$ if and only if it is strongly invertible  for the corresponding $b=1/s\in\{\infty,-1,-1/2,  0\}$.  Therefore we only check that performing $1+s$ surgery on $C_0$ produces a strongly invertible link for the four cases $s=\infty$, $s=0$, $s=-1$,  and $s=-2$.
\begin{figure}
\includegraphics[width=\textwidth]{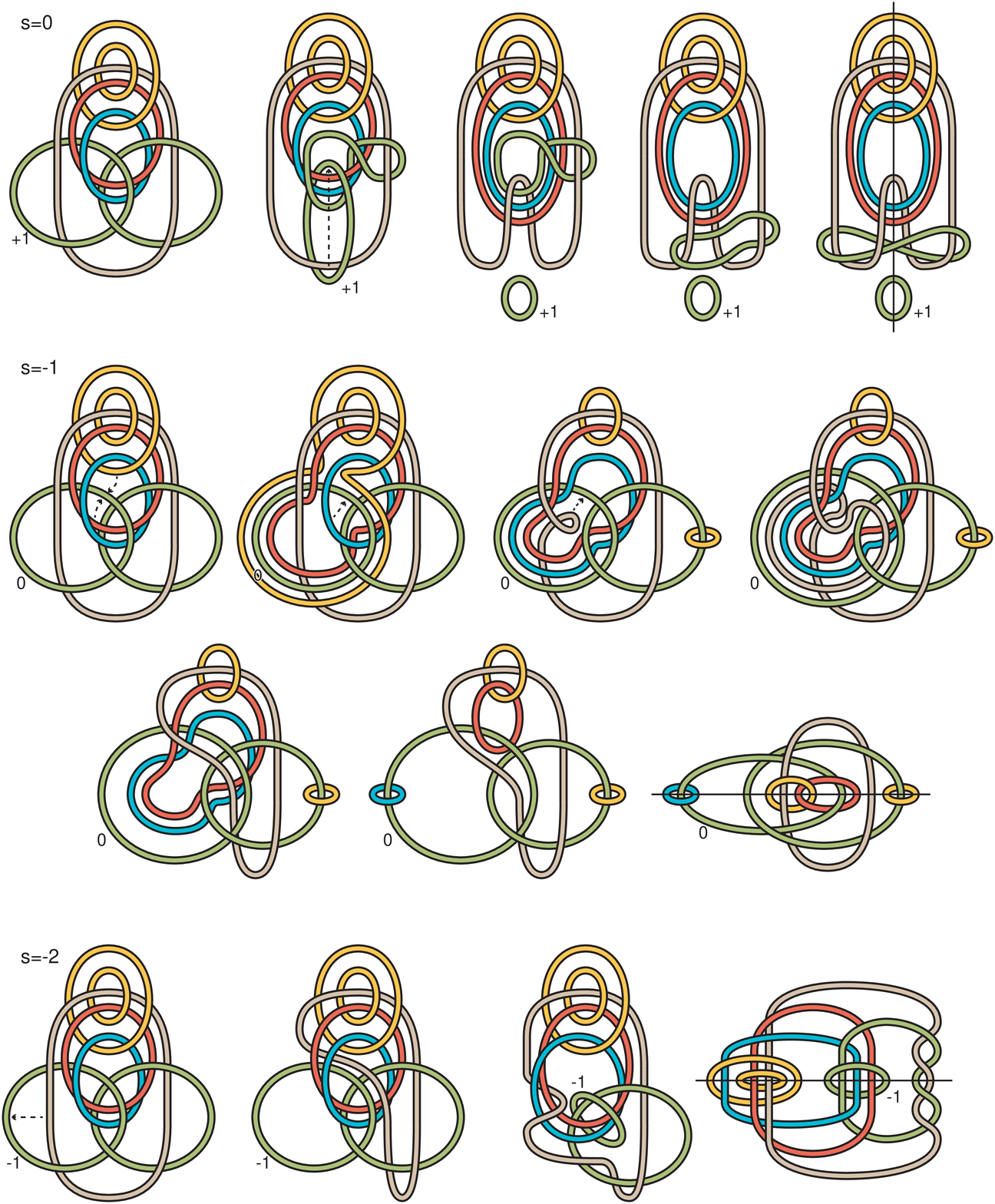}
\caption{In each case $s \in \{0,-1,-2\}$, the link $\calL \cup K$ of Figure~\ref{fig:surgerylink} is first shown with surgery coefficient $1+s$ on $C_0$.  Then sequences of handleslides of other components over $C_0$ and isotopies transform the link into one with an involution that is clearly strongly invertible on all components, except in the case $s=0$ where it is instead freely invertible on the component corresponding to $C_1$. }  
\label{fig:stronglyinvertiblesurgeries}
\end{figure}

	When $s=\infty$, we may omit component $C_0$ from Figure~\ref{fig:surgerylink}. Letting $C_1$ ``hang freely'' upon the other components, one sees that this link can be positioned to be strongly invertible along a vertical axis. For the three cases $s \in \{0,-1,-2\}$, Figure~\ref{fig:stronglyinvertiblesurgeries} shows the results of handleslides over $C_0$ and isotopies in each case.  The resulting links are strongly invertible in the cases $s=-1$ and $s=-2$, while in the case $s=0$, the inversion is free on $C_1$ and strong on the other components.  
\end{proof}

We now employ  rigorous computation to find asymmetric manifolds.

\begin{lemma}\label{lem:bulksymmetry}
For $n,b,k\in \Z$ of suitably large magnitude and $(r,s) \in \{(-1, -4+\tfrac1n), (-2,-3+\tfrac1n)\}$, the manifolds $X_k(-1,r,s,b)$ are hyperbolic and  asymmetric.
\end{lemma}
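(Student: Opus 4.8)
The plan is to present each family as a family of Dehn fillings of a single cusped hyperbolic manifold $M$ and then apply the machinery of \S\ref{sect:symm_and_Dehn_surgery}. First I would identify $M$. Setting $m=-1$ fills $J$ trivially, and the Rolfsen-twist simplification recorded in Figure~\ref{fig:m=-1} presents the rest of the diagram as the hyperbolic link L12n2253, namely $K\cup R\cup C_0\cup C_1\cup L_-$. I take $M$ to be the manifold that the family limits to, obtained from this picture by leaving unfilled exactly those cusps whose surgery slopes diverge as $n,b,k\to\infty$; concretely $M$ is (a partial filling of) the exterior of L12n2253, and its hyperbolicity is certified rigorously through the canonical triangulation in SnapPy. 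For each of the two cases, $X_k(-1,r,s,b)$ is then $M(\psi)$, where the multislope $\psi=\psi(n,b,k)$ leaves the $K$--cusp empty and fills the remaining cusps with the slopes read off from the coefficients: a slope approaching $-3$ (resp.\ $-2$) with denominator of size $n$ on $C_0$, a slope approaching $1$ with denominator of size $b$ on $C_1$, the large integral slope $r\pm k$ on $R$, and $-\tfrac1k$ on $L_-$.

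Hyperbolicity of $X_k(-1,r,s,b)$ and the containment of its isometry group will then be immediate from Theorem~\ref{thm:fillinglengths}. Each nonempty component of $\psi(n,b,k)$ has a numerator or denominator growing linearly in $n$, $b$, or $k$, so its normalized length tends to $\infty$; hence for $n,b,k$ of suitably large magnitude $\psi\in\Psi_{C_M}$, i.e.\ $\psi$ is fully generic in the sense of Definition~\ref{dfn:symmetrybreak}. Part (1) of Theorem~\ref{thm:fillinglengths} makes $M(\psi)=X_k(-1,r,s,b)$ hyperbolic, and part (2) gives the monomorphism $\Isom(X_k(-1,r,s,b))\hookrightarrow\Isom(M)$.

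The crux is to show that $\psi$ is symmetry-breaking, after which Corollary~\ref{cor:asymmetric} yields asymmetry. I would compute $G=\Isom(M)$ with SnapPy and record, for each $g\in G$, the induced permutation of the cusps together with its action on each cusp's peripheral homology. Because the $K$--cusp is the unique empty one, $g(\psi)=\psi$ forces $g$ to fix it and to permute $\{R,C_0,C_1,L_-\}$ while acting on their slopes by fixed integer matrices. A nontrivial cusp permutation is ruled out since, for large parameters, the four slopes cluster near the mutually distinct values $-3$ (or $-2$), $1$, $\infty$, and $0$ and have differently growing denominators, so a single fixed matrix cannot match them; choosing $n,b,k$ off the finitely many coincidence loci imposed by $G$ removes any residual matching. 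If $g$ instead fixes every cusp, it can preserve the generic slopes only by acting as $\pm I$ on each, since every other finite-order element of the mapping class group of a torus fixes merely finitely many slopes. The genuinely dangerous possibility is therefore a nontrivial $g$ fixing each cusp and acting as $\pm I$ on each: such a symmetry is unbreakable and extends across every filling (cf.\ the final remarks of \S\ref{sect:symm_and_Dehn_surgery}).

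Consequently the decisive step---and the main obstacle---is the SnapPy verification that $G=\Isom(M)$ contains no nontrivial element fixing every cusp and acting as $\pm I$ on each, and more generally the explicit reading of the $G$--action on cusps and slopes needed to certify symmetry-breaking. This is exactly the analysis that separates the ``most'' families, where $\psi$ is symmetry-breaking, from the exceptional behaviour treated in Addendum~\ref{add:t12685}; it is consistent with Lemma~\ref{lem:strinv}, which does not force a strong inversion for $(r,s)\in\{(-1,-4+\tfrac1n),(-2,-3+\tfrac1n)\}$ and generic $b$. Once this is in hand, combining fully generic with symmetry-breaking through Corollary~\ref{cor:asymmetric} gives that both families are asymmetric hyperbolic, completing the proof.
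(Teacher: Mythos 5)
Your setup coincides with the paper's: the same five-cusped manifold $M$ (the exterior of $K\cup R\cup C_0\cup C_1\cup L_-$ from Figure~\ref{fig:m=-1}(c)), the same multislope $(*,\,r-k,\,1+s,\,1+\tfrac1b,\,-\tfrac1k)$, the same appeal to Theorem~\ref{thm:fillinglengths} for hyperbolicity and the monomorphism $\Isom(M(\psi))\hookrightarrow\Isom(M)$, and the same reduction of the problem to certifying that the multislope is symmetry-breaking via a SnapPy computation of $\Isom(M)$. You also correctly isolate the one scenario that would be fatal (a nontrivial isometry fixing every cusp and acting as $\pm I$ on peripheral homology, which is unbreakable).

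The gap is that you stop at naming the decisive computation rather than carrying it out, and the surrogate argument you offer in its place does not actually deliver the lemma as stated. The paper's computation shows $\Isom(M)\cong\Z/2$, generated by a single involution $\tau$ that fixes the $K$--cusp, is a strong inversion on the $R$-- and $L_-$--cusps, and exchanges the $C_0$-- and $C_1$--cusps carrying the $p/q$ slope of one to the $p/q$ slope of the other. Symmetry-breaking then follows from the elementary arithmetic fact that $1+(-4+\tfrac1n)\neq 1+\tfrac1b$ and $1+(-3+\tfrac1n)\neq 1+\tfrac1b$ for \emph{all} integers $n,b$ (since $|\tfrac1n-\tfrac1b|\le 2<3$), so no genericity assumption beyond ``suitably large magnitude'' is needed. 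By contrast, your claim that one need only avoid ``finitely many coincidence loci'' is unjustified: for a cusp-exchanging isometry acting by a fixed matrix $A$, the set of parameters with $A(\psi_i)=\psi_j$ can be an infinite family containing arbitrarily large $(n,b)$ (e.g.\ if the two slopes had the same limit, the locus could be a diagonal $n=b$), in which case the conclusion would fail for infinitely many large parameters and the lemma would need a genericity hypothesis it does not have. So the ``distinct limit points'' observation must be supplemented either by the explicit identification of $\Isom(M)$ and its slope action (as in the paper) or by a quantitative argument that the relevant coincidence set is empty; without one of these the proof is incomplete.
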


\begin{proof}
Let $M$ be the complement of the five-component link $K \cup R \cup C_0 \cup C_1 \cup L_-$ shown in Figure~\ref{fig:m=-1}(c) and consider the multislope $\phi_{k,r,s,b} = (*, r-k,1+s,1+1/b,-1/k)$.  According to Figure~\ref{fig:m=-1}, this filling yields the manifold $X_k(-1,r,s,b)$.

SnapPy run as a Sage module, determines that $M$ is a hyperbolic manifold with 
canonical triangulation given by

\begin{verbatim}
sage: import snappy
sage:  M = snappy.Manifold('qvvLLLAAPQQkfjlhmkjnlmknpopopaaaagocagggaaaaco_abdecBbb
....: CaBbCBbBabaababBa')
sage: M.verify_hyperbolicity()[0]
sage: MC = M.canonical_retriangulation(True)
sage: len(MC.isomorphisms_to(MC)) == M.symmetry_group().order()
sage: M.symmetry_group().isometries()
\end{verbatim}

Here, we drew the link in SnapPy
(see figure10c.lnk in the ancillary files)
 and then imported it into sage  via its triangulation signature. From this computation, we see the symmetry group of the original triangulation is the full symmetry group and there is a unique non-trivial symmetry $\tau$ which fixes the cusp of the component $K$.  This symmetry is an involution that exchanges two cusps (indexed by `2' and `3' respectively in the associated triangulation $M$) and is a  strong involution on the remaining cusps. It can be observed in Figure~\ref{fig:m=-1}(c) as rotation about a vertical axis.  One then sees, using the standard meridian-longitude parameterizations of surgery slopes for the exchanged cusps (formerly $C_0$ and $C_1$), that the involution $\tau$ exchanges the $p/q$ slope of one with the $p/q$ slope of the other.

By choosing $n,b,k \in \Z$ of suitably large magnitude, we may ensure that $r-k$ and $s$ also have suitably large magnitude to form a fully generic multislope $\phi_{k,r,s,b} = (*, r-k,1+s,1+1/b,-1/k)$ in $\Psi_{C_M}$.  Theorem \ref{thm:fillinglengths} informs us that for any multislope  $\psi \in \Psi_{C_M}$, any element of Isom($M(\psi)$) restricts to an isometry of $M$ on the complement of the cores of the surgery solid tori.   Yet because 
$1+(-4+\tfrac1n)\ne 1+\tfrac1b$ and $1+(-3+\tfrac1n)\ne 1+\tfrac1b$ for any $b,n \in \Z$, the involution $\tau \in \Isom(M)$ cannot be a restriction of a symmetry in $\Isom(M(\phi_{k,r,s,b}))$.  So $\phi_{k,r,s,b}$ is symmetry-breaking.  Hence $\Isom(M(\phi_{k,r,s,b}))$ must be trivial.  Thus $X_k(-1,r,s,b)$ is asymmetric.
\end{proof}

\begin{lemma}\label{lem:bulkreduciblesymmetry}
For $b,k\in \Z$ of suitably large magnitude and $(r,s) \in \{(-1, -4), (-2,-3)\}$, the manifolds $X_k(-1,r,s,b)$ are hyperbolic and asymmetric. 
\end{lemma}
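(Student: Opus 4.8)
The plan is to mirror the argument of Lemma~\ref{lem:bulksymmetry}, with one structural change forced by the fact that $s$ is now an integer. In Lemma~\ref{lem:bulksymmetry} the slope $1+s=-3+\tfrac1n$ (resp.\ $-2+\tfrac1n$) on the $C_0$ cusp has denominator $n$, so its normalized length grows with $n$ and the whole multislope $\phi_{k,r,s,b}=(*,\,r-k,\,1+s,\,1+\tfrac1b,\,-\tfrac1k)$ on the five-cusped link complement $M$ of Figure~\ref{fig:m=-1}(c) can be made fully generic. Here $1+s\in\{-3,-2\}$ is a fixed integer slope of bounded normalized length, so it cannot be a component of a fully generic multislope. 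I would therefore pass to an intermediary partial filling: let $N_s$ be the four-cusped manifold obtained from the five-component link complement by Dehn filling only the $C_0$ cusp along the fixed slope $1+s$, leaving the cusps of $K$, $R$, $C_1$, and $L_-$ unfilled. With this, $X_k(-1,r,s,b)$ is recovered as the filling of $N_s$ along the multislope $(*,\,r-k,\,1+\tfrac1b,\,-\tfrac1k)$ on $(K,R,C_1,L_-)$, and there are two intermediaries to treat, $N_{-4}$ (filling $C_0$ along $-3$) and $N_{-3}$ (filling $C_0$ along $-2$).

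First I would use SnapPy run as a Sage module, exactly as in Lemma~\ref{lem:bulksymmetry}, to verify that each of $N_{-4}$ and $N_{-3}$ is hyperbolic and to compute its isometry group from the canonical triangulation (with finite vertices, if needed). Filling the $C_0$ cusp destroys the involution $\tau$ of $M$, since $\tau$ exchanges the $C_0$ and $C_1$ cusps while we leave $C_1$ unfilled; moreover, because the slope $1+s$ is not long, isometries of $N_s$ need not restrict to isometries of $M$, so $\Isom(N_s)$ must be determined directly rather than inferred from $\Isom(M)$. If the computation returns a trivial isometry group, then every filling of $N_s$ is automatically asymmetric. If it returns a nontrivial group, I would examine how each nontrivial isometry permutes the remaining cusps and acts on their slopes, exactly as $\tau$ was analyzed in Lemma~\ref{lem:bulksymmetry}, in order to confirm that the multislope $(*,\,r-k,\,1+\tfrac1b,\,-\tfrac1k)$ is symmetry-breaking for all $b,k$.

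Next I would check full genericity. Since $r\in\{-1,-2\}$ is fixed, the integer slope $r-k$ has $|r-k|\to\infty$ as $|k|\to\infty$, while the slopes $1+\tfrac1b=\tfrac{b+1}{b}$ and $-\tfrac1k$ have denominators growing with $|b|$ and $|k|$; in each case the normalized length tends to infinity. Hence for $b,k\in\Z$ of suitably large magnitude the multislope lies in $\Psi_{C_{N_s}}$ and is fully generic. Theorem~\ref{thm:fillinglengths}(1) then yields that $X_k(-1,r,s,b)$ is hyperbolic, and Theorem~\ref{thm:fillinglengths}(2) gives a monomorphism $\Isom\big(X_k(-1,r,s,b)\big)\hookrightarrow\Isom(N_s)$. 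Combined with either triviality of $\Isom(N_s)$ or the symmetry-breaking conclusion of the previous step, this forces $\Isom\big(X_k(-1,r,s,b)\big)$ to be trivial, so $X_k(-1,r,s,b)$ is asymmetric.

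The main obstacle I expect is the hyperbolicity of the intermediaries $N_{-4}$ and $N_{-3}$: the $C_0$-slopes $-3$ and $-2$ are specific, non-generic fillings, so hyperbolicity of the resulting four-cusped manifolds is not guaranteed by the hyperbolic Dehn surgery theorem and must be confirmed by a rigorous check (analogous to the \texttt{verify\_hyperbolicity} and canonical-triangulation computations used in Lemma~\ref{lem:bulksymmetry}). The accompanying isometry-group computation, together with the bookkeeping of how any surviving isometry acts on the four remaining cusps, is the other place where care is required; everything after that is a routine repetition of the symmetry-breaking argument already established.
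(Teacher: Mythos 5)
Your proposal is correct and follows essentially the same route as the paper: the paper likewise passes to the intermediary four-cusped manifolds $N_{-4}=M(*,*,-3,*,*)$ and $N_{-3}=M(*,*,-2,*,*)$, verifies their hyperbolicity and computes their isometry groups directly with SnapPy, and then applies Theorem~\ref{thm:fillinglengths} to the fully generic multislope $(*,\,r-k,\,1+\tfrac1b,\,-\tfrac1k)$. The computations resolve the case analysis you anticipated: $N_{-4}$ has a single non-trivial symmetry exchanging the $K$ and $R$ cusps, and $N_{-3}$ has a single non-trivial orientation-reversing symmetry fixing all cusps; in both cases the multislope is symmetry-breaking, exactly as your argument requires.
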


\begin{proof}
Continue with the notation and idea of the previous proof.
Let $N_{-4}$ be the manifold $M(*, *,-3,*,*)$ and consider the multislope $\phi_{k,b} = (*, -1-k,1+1/b,-1/k)$.  Then $N_{-4}(\phi_{k,b}) = M(\phi_{k,-1,-4,b}) = X_k(-1,-1,-4,b)$.

SnapPy determines that $N_{-4}$ is a hyperbolic manifold with canonical triangulation given by \begin{verbatim}
'rvLLLzLzQMQQcdgiijjklmknppooqqqoauaccvvtjlagggvvv_dacbaBbbaBbBaBbaBbBa'.\end{verbatim}
SnapPy also determines that this canonical triangulation (and hence, $\Isom(N_{-4})$) has just one non-trivial symmetry. However, that symmetry exchanges the cusp corresponding to $K$ with the cusp corresponding to $R$. As before,  choosing $b,k\in \Z$ of suitably large magnitude  ensures that $\phi_{k,b} \in \Psi_{C_{N_{-4}}}$ is fully generic and symmetry-breaking.  Hence we conclude that $\Isom(N_{-4}(\phi_{k,b}))$ is trivial and $X_k(-1,-1,-4,b)$ is asymmetric.
	
\medskip

Now let $N_{-3}$ be the manifold $M(*, *,-2,*,*)$ and consider the multislope $\phi'_{k,b} = (*, -2-k,1+1/b,-1/k)$.  Then $N_{-3}(\phi'_{k,b}) = M(\phi_{k,-2,-3,b}) = X_k(-1,-2,-3,b)$.

\begin{verbatim}
'nvvLAPPMQkghfkjiljkljmmmmtadegrojrghqq_dcabBabbBbBcaBBbbabB'
\end{verbatim}

then show that the isomorphisms of this triangulation are in fact the whole group.

\begin{verbatim}
sage: M = snappy.Manifold('qvvLLLAAPQQkfjlhmkjnlmknpopopaaaagocagggaaaaco_abcedBbbCaBbCBaaBbBbaaBb\
a')
sage: M.dehn_fill((-2,1),2)
sage: MF = snappy.Manifold(M.filled_triangulation())
sage: CF = MF.canonical_retriangulation(True)
sage:  "CF a triangulation:", not CF.has_finite_vertices()
sage: M2 = snappy.Manifold('nvvLAPPMQkghfkjiljkljmmmmtadegrojrghqq_dcabBabbBbBcaBBbbabB')
sage: MF.verify_hyperbolicity()[0]
sage: print "isometry check:", MF.is_isometric_to(M2)
sage: print "symmetry group check:", len(CF.isomorphisms_to(CF)) == len(M2.isomorphisms_to(M2))
sage: print "CF.isomorphisms_to(CF)", CF.isomorphisms_to(CF)
sage: print "len(isom(CF):", len(CF.isomorphisms_to(CF))
\end{verbatim}

SnapPy also determines that this triangulation $M2$ (and hence $\Isom(N_{-3})$)
has a single non-trivial symmetry that preserves the cusp corresponding to $K$.  However, it is an orientation-reversing symmetry which fixes all the cusps.  Thus each cusp has a single slope preserved by this symmetry.  Regardless, $\phi'_{k,b}$ is not preserved by this symmetry so it must be symmetry-breaking. Hence choosing $b,k\in \Z$ of suitably large magnitude  to ensure that $\phi'_{k,b} \in \Psi_{C_{N_{-3}}}$ is fully generic ensures that
 $\Isom(N_{-3}(\phi'_{k,b}))$ is trivial and $X_k(-1,-2,-3,b)$ is asymmetric.   
\end{proof}

We now check the following manifolds.

\begin{lemma}\label{lem:bulk2symmetry}
For $b,k\in \Z$ of suitably large magnitude and $(m,s) \in \{(-1,-6), (-2,-4), (-3,-3)\}$, the manifolds $X_k(m,0,s,b)$ are hyperbolic
and asymmetric.
\end{lemma}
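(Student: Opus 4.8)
The plan is to reprise the strategy of Lemma~\ref{lem:bulksymmetry} and Lemma~\ref{lem:bulkreduciblesymmetry}. For each of the three pairs $(m,s)$ I would exhibit $X_k(m,0,s,b)$ as a Dehn filling of a fixed intermediate multi-cusped manifold $P_{(m,s)}$ whose hyperbolicity and full isometry group are computed rigorously in SnapPy, and then choose $b,k$ so that the residual filling multislope is both fully generic and symmetry-breaking. Corollary~\ref{cor:asymmetric} then yields the claim.

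The key bookkeeping step is to separate the cusps whose slopes are held fixed from those sent to infinity. Since $r=0$, $s\in\{-6,-4,-3\}$, and $m\in\{-1,-2,-3\}$ are all constant, the slopes $1-\tfrac{1}{m+1}$ on $J$, $0$ on $R$, and $1+s$ on $C_0$ are fixed. Only the slopes controlled by the large parameters $b$ and $k$ can be guaranteed to have large normalized length, so the fixed slopes must be absorbed into the base manifold by pre-filling. I would therefore define $P_{(m,s)}$ by performing exactly these three fillings, leaving the cusps of $K$, $C_1$, $L_+$, and $L_-$ open, so that $X_k(m,0,s,b)=P_{(m,s)}(*,1+\tfrac1b,\tfrac1k,-\tfrac1k)$. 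In the case $m=-1$ the slope on $J$ is $\infty$, and as in Figure~\ref{fig:m=-1} the component $L_+$ is absorbed into $R$; this case instead follows Lemma~\ref{lem:bulkreduciblesymmetry} verbatim, with $R$ retained as a cusp filled along the large slope $r-k=-k$ and residual multislope $(*,-k,1+\tfrac1b,-\tfrac1k)$. Next I would run SnapPy as a Sage module to certify that each $P_{(m,s)}$ is hyperbolic and to compute $\Isom(P_{(m,s)})$ from its canonical cell decomposition (using the canonical triangulation with finite vertices when the decomposition is not a triangulation), recording the isometry signature together with framing data as in the preceding proofs. From this I would extract, for every non-trivial $g\in\Isom(P_{(m,s)})$, the induced permutation of the open cusps and, on each fixed cusp, the finite set of slopes preserved by $g$.

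The crux is the symmetry-breaking verification. Taking $b,k\in\Z$ of suitably large magnitude places the residual multislope $\phi_{k,b}$ in $\Psi_{C_M}$ for $M=P_{(m,s)}$, so by Theorem~\ref{thm:fillinglengths}(2) every isometry of $X_k(m,0,s,b)$ restricts to an element of $\Isom(P_{(m,s)})$; it then suffices to show that no non-trivial $g$ fixes $\phi_{k,b}$. If $g$ permutes the open cusps non-trivially it must move the empty cusp $K$, and since the remaining cusps are filled it cannot preserve $\phi_{k,b}$; similarly any $g$ exchanging two filled open cusps is defeated once the corresponding large-parameter slopes are seen to differ. The delicate case---and the main obstacle---is a $g$ (for instance an orientation-reversing involution as in the $N_{-3}$ analysis of Lemma~\ref{lem:bulkreduciblesymmetry}) that fixes every open cusp; here asymmetry rests on the explicit SnapPy description of the finitely many slopes preserved on each cusp and on confirming that, for $b,k$ of large magnitude, none of $1+\tfrac1b$, $\tfrac1k$, $-\tfrac1k$ lies among them. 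This is consistent with Lemma~\ref{lem:strinv}, whose sufficient condition for a strong inversion ($s\in\{0,-1,-2,\infty\}$ or $b\in\{\infty,-1,-\tfrac12,0\}$) fails in each of our cases. Once symmetry-breaking is confirmed for all three $(m,s)$, Theorem~\ref{thm:fillinglengths} and Corollary~\ref{cor:asymmetric} give that each $X_k(m,0,s,b)$ is hyperbolic and asymmetric.
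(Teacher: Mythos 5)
Your proposal follows essentially the same route as the paper: pre-fill the cusps carrying the fixed slopes ($J$, $C_0$, and --- for $m\neq -1$ --- $R$) to obtain an intermediate cusped hyperbolic manifold, certify its hyperbolicity and isometry group with SnapPy via the canonical (re)triangulation, and invoke Theorem~\ref{thm:fillinglengths} together with a symmetry-breaking check on the residual multislope $(*,1+\tfrac1b,\tfrac1k,-\tfrac1k)$ (or $(*,-k,1+\tfrac1b,-\tfrac1k)$ when $m=-1$). The only difference is that the paper's computations show the four-cusped manifolds for $(m,s)\in\{(-2,-4),(-3,-3)\}$ are already asymmetric, so your contingency for a symmetry fixing every open cusp never arises, and in the $m=-1$ case the unique nontrivial symmetry exchanges the $K$ and $L_-$ cusps, which is exactly the easy case of your argument.
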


\begin{proof}

	We begin by considering the case  $m=-1$, which allows us to work with the link in Figure \ref{fig:m=-1}(c) and its complement $M$ as in the proof of Lemma~\ref{lem:bulksymmetry}. We then set  $s=-6$ to fill the cusp corresponding to $C_0$. 
	Let $N_{-6}$ be the manifold $M(*, *,-5,*,*)$ and consider the multislope $\phi''_{k,b} = (*,-k,1+1/b,-1/k)$.  (Note that $r=0$ for this lemma.) Then $N_{-6}(\phi''_{k,b}) = M(\phi_{k,0,-6,b}) = X_k(-1,0,-6,b)$.
	
	 A SnapPy computation similar to those above (see lemma4\_7check.py in the ancillary files) shows that $N_{-1,-6}$ is hyperbolic and has a single non-trivial symmetry.   This symmetry is a strong inversion on the cusps corresponding to $R$ and $C_1$ but exchanges the cusps corresponding to $K$ and $L_-$.  Hence the multislope $\phi''_{k,b}$ is not preserved by this symmetry.  Hence when $b,k \in \Z$ have suitably large magnitude to ensure $\phi_{k,b} \in \Psi_{C_{N_{-1,-6}}}$ is fully generic and symmetry-breaking, we conclude that $\Isom(N_{-1,-6}(\phi''_{k,b}))$ is trivial and $X_k(-1,0,-6,b)$ is an asymmetric hyperbolic manifold.
	
\medskip

We next consider manifolds of the form $X_k(m,0,s,b)$, where  $(m,s)$ is in $\{(-2,-4), (-3,-3)\}$. We first fill the seven-cusped manifold in Figure \ref{fig:surgerylink} along cusps $C_0$, $J$ and $R$ to produce $4$--cusped manifolds $N_{-2,-4}$ and $N_{-3,-3}$. In both cases, the filled manifolds can be verified {with a SnapPy computation as in the proof of Lemma~\ref{lem:bulksymmetry}} as hyperbolic and asymmetric.   Finally consider the remaining multislopes $(*,1+1/b, 1/k, -1/k)$ filling $N_{m,s}$ along the cusps corresponding to $C_1$, $L_+$, and $L_-$ to produce $X_k(m,0,s,b)$.  Then for $(m,s) \in \{(-2,-4), (-3,-3)\}$, when  $b,k \in \Z$ have suitably large magnitude to ensure that these slopes are in $\Psi_{C_{N_{m,s}}}$ and hence fully generic, the manifold $X_k(m,0,s,b)$ is an asymmetric hyperbolic manifold.
\end{proof}

For the next lemma, we  consider fillings that are fully generic  in the sense of \S\ref{sec:isometrygroupsfillings}. In particular, we fill along slopes with sufficiently high fillings so that the symmetry group of the filled manifold injects into the symmetry group of the unfilled manifold.

\begin{lemma}\label{lem:tableInfo}
For $k \in \Z$ of suitably large magnitude and $(m,s,b)$ as in Table~\ref{table:cabledgofk2}, the manifolds $X_k(m,0,s,b)$ are hyperbolic.  Furthermore they are generically asymmetric except where noted in the table.
\end{lemma}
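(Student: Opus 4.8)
The plan is to run, row by row, the same machine as in Lemmas~\ref{lem:bulksymmetry} and~\ref{lem:bulk2symmetry}, but now with $(m,s,b)$ held fixed at the tabulated values and only $k$ tending to infinity. Since $b$ is fixed, the slope $1+1/b$ on $C_1$ no longer grows, so it must be absorbed into the base manifold rather than left for the generic filling. Accordingly, for each row I would fill the complement of $K \cup \calL$ along \emph{every} cusp whose slope is independent of $k$ --- namely $J$, $C_0$ (with slope $1+s$), $C_1$ (with slope $1+1/b$), and, when $m \neq -1$, the cusp $R$ (with slope $r=0$) --- to produce a three-cusped manifold $M$ whose open cusps are the cusp of $K$ together with the two cusps carrying $k$-dependent slopes. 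These latter two are the cusps of $L_+$ and $L_-$ (receiving $1/k$ and $-1/k$) when $m \neq -1$, or the cusps of $R$ and $L_-$ (receiving $-k$ and $-1/k$) in the absorbed $m=-1$ presentation of Figure~\ref{fig:m=-1}(c). I would then use SnapPy as a Sage module to verify that each $M$ is hyperbolic and to compute $\Isom(M)$ rigorously from its canonical retriangulation, recording for every non-trivial isometry its permutation of the three cusps and, where relevant, its action on slopes.

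The hyperbolicity of $X_k(m,0,s,b)$ is then immediate: the two filled slopes are $k$-dependent and their normalized lengths grow without bound as $|k|\to\infty$, so for $|k|$ of suitably large magnitude the filling multislope lies in $\Psi_C$ with $C = 7.5832$, whence Theorem~\ref{thm:fillinglengths}(1) places it in the hyperbolic Dehn surgery space of $M$ and $X_k(m,0,s,b)$ is hyperbolic with geodesic filling cores.

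For the asymmetry of the generic rows I would argue that the multislope is symmetry-breaking. Because only the cusp of $K$ is assigned the empty slope, any isometry of $M$ not fixing the $K$-cusp automatically moves the multislope; it therefore suffices to examine the finitely many non-trivial symmetries fixing the $K$-cusp and to check that each moves the multislope. From the SnapPy data such a symmetry either (i) interchanges the two open $k$-dependent cusps, which carry the distinct slopes $1/k$ and $-1/k$ (resp.\ $-k$ and $-1/k$ for all $|k|\ge 2$), or (ii) fixes them setwise as an orientation-reversing involution whose unique preserved slope on each cusp disagrees with the prescribed $k$-dependent slope once $|k|$ is large. In either case $g(\psi)\neq\psi$. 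Taking $|k|$ large enough that the multislope additionally lies in $\Psi_{C_M}$ (Theorem~\ref{thm:fillinglengths}(2)), Corollary~\ref{cor:asymmetric} shows $X_k(m,0,s,b)$ is asymmetric. The rows flagged in the table are precisely those for which $\Isom(M)$ contains a strong inversion at the two open filled cusps: such an involution restricts to each filled torus as the hyperelliptic involution, so by the discussion at the end of Section~\ref{sect:symm_and_Dehn_surgery} it extends across every Dehn filling and descends to a non-trivial orientation-preserving involution of $X_k(m,0,s,b)$ that no choice of $k$ can remove; hence asymmetry fails and these rows are excluded.

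The difficulty here is organizational rather than conceptual: one must identify the correct base manifold $M$ for each row, read off the cusp permutation and slope action of each isometry from the (rigorously certified) canonical triangulation, and --- most delicately --- separate the generic rows from the exceptional ones. The decisive point is to confirm, for each flagged row, that the surviving symmetry is genuinely a strong inversion fixing both open cusps setwise (and therefore unbreakable), while verifying in every unflagged row that each non-trivial isometry either fails to fix the $K$-cusp or else moves the chosen multislope for all large $|k|$. As with the earlier lemmas, the constant $C_M$ of Theorem~\ref{thm:fillinglengths}(2) is non-effective, so the conclusion is existential --- valid for $|k|$ suitably large without an explicit threshold.
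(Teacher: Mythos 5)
Your proposal is correct and follows essentially the same route as the paper: the paper's proof simply delegates the row-by-row verification to an ancillary Sage/SnapPy script (table4check.py), which carries out exactly the procedure you describe --- fill all $k$-independent cusps to get a base manifold $M$, certify hyperbolicity and compute $\Isom(M)$ from the canonical retriangulation, and invoke Theorem~\ref{thm:fillinglengths} together with the symmetry-breaking/unbreakable-strong-inversion dichotomy. Your write-up is in effect a fuller account of what that computation must check, matching the template of Lemmas~\ref{lem:bulksymmetry} and~\ref{lem:bulk2symmetry}.
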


\begin{proof} 
Each of these manifolds can be obtained from Figure \ref{fig:surgerylink} according to the filling parameters indicated in Table~\ref{table:cabledgofk2}. The code table4check.py run in sage verifies all of the data in the table. 
\end{proof}

\subsection{Surgeries between cable spaces}
Given a knot surgery between Seifert fibered manifolds with boundary, the various fillings of the boundary components will induce families of surgeries between closed (generalized) Seifert fibered manifolds.  For example, Berge's knots in solid tori with solid torus surgeries \cite{bergeS1xD2} naturally generate families of knots in lens spaces with lens spaces surgeries.   At its core, the ``seiferter'' theory of Deruelle-Miyazaki-Motegi \cite{DMM} similarly uses knots in solid tori with surgeries to Seifert fibered spaces over the disk with two exceptional fibers to create families of knots in $S^3$ with surgeries to small Seifert fibered spaces.  

More generally, one might hope to find knots in $T^2 \times I$ with a non-trivial surgery to $T^2 \times I$. However, no such hyperbolic knot exists (see \cite[Lemma 5.2]{BBCW2012} for example). Instead, we set out to find knots in cable spaces which admit a non-trivial surgery to another cable space (including $T^2 \times I$).  To do this, we begin with a focus on finding fibered jointly primitive knots in cable spaces.  Hence we first determine which cable spaces are twice-punctured torus bundles.

\begin{lemma}\label{lem:2PTcable}
	Up to homeomorphism, the manifolds $A(2/1)$ and $A(3/1)$ are the only cable spaces which are twice-punctured torus bundles. 
\end{lemma}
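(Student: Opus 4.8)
The plan is to prove the two directions separately, with the conceptual heart being an orbifold Euler characteristic count that bounds the order of the cable space, together with an explicit periodic-monodromy construction for the existence half. I would begin with existence. The mapping torus of any finite-order homeomorphism $\phi$ of a surface $F$ is Seifert fibered over the quotient orbifold $F/\phi$, with the exceptional fibers lying over the cone points and the boundary circles of $F$ (each fixed setwise by $\phi$) giving torus boundary components. So I would produce order $4$ and order $3$ homeomorphisms of the twice-punctured torus whose quotient orbifolds are the annulus with one cone point of order $2$ and order $3$, respectively. Concretely, realize the torus as $\R^2/\Z^2$ and take the linear maps of order $4$ and $3$ (trace $0$ and $-1$). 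A Lefschetz count gives $2$ and $3$ fixed points, with quotients $S^2(2,4,4)$ and $S^2(3,3,3)$. Deleting the two order-$4$ fixed points (resp. two of the three order-$3$ fixed points) leaves a twice-punctured torus on which the map still acts with full order fixing each puncture setwise, and downstairs leaves an annulus with a single cone point of order $2$ (resp. $3$). These mapping tori are thus cable spaces of order $2$ and $3$.

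Next I would pin down the homeomorphism type so that these mapping tori are recognized as $A(2/1)$ and $A(3/1)$. Using Remark~\ref{rem:cabletorus}, $A(p/q)$ is the exterior of the $(p,q')$--torus knot in the solid torus with $qq'\equiv -1 \pmod p$. The homeomorphism type of this exterior depends only on $p$ and on $q'$ modulo $p$ up to sign: a meridional twist of the solid torus changes $q'$ by multiples of $p$, and an orientation-reversing homeomorphism negates it. For $p=2$ the only admissible residue is $q'\equiv 1$, and for $p=3$ the residues $1$ and $2\equiv -1$ are identified by sign. Hence there is a unique cable space of each order, namely $A(2/1)$ and $A(3/1)$, and the bundles constructed above are these.

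For the converse (the word ``only''), suppose a cable space $A(p/q)$ is a twice-punctured torus bundle with fiber $\Sigma$. Since $\Sigma$ is a fiber with $\chi(\Sigma)=-2<0$, it is essential, so by the classification of essential surfaces in Seifert fibered spaces it is isotopic to a horizontal or a vertical surface; vertical surfaces are saturated and hence have Euler characteristic $0$, so $\Sigma$ must be horizontal. A horizontal surface of degree $n$ over the base orbifold $B$, an annulus with one cone point of order $|p|$, satisfies $\chi(\Sigma)=n\,\chi^{\mathrm{orb}}(B)$ with $\chi^{\mathrm{orb}}(B)=-\tfrac{|p|-1}{|p|}$. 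Hence
\[
n=\frac{2|p|}{|p|-1},
\]
which is a positive integer only when $|p|-1$ divides $2|p|$, i.e. (as $\gcd(|p|-1,|p|)=1$) when $|p|-1\in\{1,2\}$. This forces $|p|\in\{2,3\}$, and combined with the homeomorphism classification this completes the proof.

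The main obstacle I anticipate is in the existence direction: one must check carefully that deleting the chosen fixed points leaves $\phi$ acting with its full order and fixing each puncture setwise, and that the quotient is connected with exactly the claimed cone data, so that the mapping torus is genuinely an order-$2$ or order-$3$ cable space. The Euler characteristic count is clean, but it relies on the horizontal/vertical classification theorem, whose small-base exceptional Seifert pieces should be explicitly excluded, and on matching the constructed bundles to $A(2/1)$ and $A(3/1)$ through Remark~\ref{rem:cabletorus}; these identifications are where the bookkeeping concentrates.
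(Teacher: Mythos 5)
Your proof is correct, but it takes a genuinely different route from the paper. The paper works entirely inside second homology: it computes the Thurston norm on $H_2\bigl(A(p/q),\bdry A(p/q);\R\bigr)$ in the basis given by the $p$--punctured meridional disk $D$ and the spanning annulus $A$, observes that the norm ball is a strip whose boundary lines are fibered faces, and deduces that every fiber is of the form $mD+nA$ with $x(mD+nA)=m(p-1)$; setting this equal to $2$ forces $(m,p)=(2,2)$ or $(1,3)$, and a residue-by-residue check of $n \bmod m$ identifies exactly which classes are twice-punctured tori, so existence and uniqueness come out of one computation. You instead split the statement: for non-existence you use the horizontal/vertical dichotomy for essential surfaces and the degree formula $\chi(\Sigma)=n\,\chi^{\mathrm{orb}}(B)$ over the annulus with one cone point, which gives $n=2|p|/(|p|-1)$ and hence $|p|\in\{2,3\}$; for existence you build explicit order $4$ and order $3$ periodic monodromies on the twice-punctured torus from linear torus automorphisms and read off the base orbifold of the mapping torus. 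The two numerical constraints are equivalent (your degree $n$ is the paper's $mp$), but your version buys an explicit fibration with periodic monodromy --- which is in the spirit of how the paper later exhibits these fibrations in Theorem~\ref{thm:bulkcable} via the braids $\calB_k(-1,-1)$ and $\calB_k(-1,-2)$ --- while the paper's version buys a complete enumeration of all fibered classes without having to invoke the horizontal surface classification or the uniqueness of the Seifert fibration.

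Two small points to tidy. First, the paper's definition of cable space includes the degenerate cases $|p|=1$ (the thickened torus) and $p=0$ (a connected sum of two solid tori), and your formula $n=2|p|/(|p|-1)$ is vacuous or undefined there; these need one line each ($\chi^{\mathrm{orb}}=0$ forces every horizontal surface to have $\chi=0$ in the first case, and reducibility rules out fibering with a $\chi<0$ fiber in the second). The paper's own proof is equally silent on $p=0$, so this is a shared omission rather than a defect of your approach. Second, your appeal to the uniqueness of the order--$2$ and order--$3$ cable spaces up to homeomorphism implicitly uses that the Seifert fibration of a cable space of order at least $2$ is unique, so that ``order'' is a homeomorphism invariant; this is standard but worth stating since you lean on it in both directions.
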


\begin{proof}
	Consider the Thurston norm $x$ on $H_2\big(A(p/q), \bdry A(p/q); \R\big) \cong \R^2$ \cite{thurstonnorm}.    View $A(p/q)$ as the exterior of the $(p,q)$--torus knot in the solid torus,  let $D$ be a $p$--punctured meridional disk, and let $A$ be the spanning annulus.  Then $[D]$ and $[A]$ form a basis for $H_2(A(p/q), \bdry A(p/q))$.  Since $x([A])=0$, the Thurston norm ball is a strip with boundary a pair of parallel lines.  
	Since $D$ is a fiber of a fibration of $A(p/q)$, these parallel lines are ``fibered faces''.  Hence any integral class that is not a multiple of $[A]$ is represented by a fiber of a fiber bundle.  Moreover, this implies that any fiber $F$ of a fibration of $A(p/q)$ must be represented as $F=mD+nA$ for integers $m,n$ where $m \neq 0$.  
	
	Since $mD+nA$ and $mD+(n+m)A$ are related by a homeomorphism of $A(p/q)$ (indeed, by a Dehn twist along $A$), we only need to consider $n \pmod{m}$.  By mirroring and reversals of orientation we may assume that $m$ and $p$ are positive integers.  Then, since $x([D])  = p-1$, we have that $x(F)= x(m[D]+n[A]) = m(p-1)$.  This can equal $2$ only if either $(m,p)=(2,2)$ or $(1,3)$.  For $(m,p)=(2,2)$, one  checks among the $n \pmod 2$ that $F$ is a twice-punctured torus only if $n=1$ and $F=2D+A$. For $(m,p)=(1,3)$, one similarly checks among the $n \pmod 3$ that, for a certain choice of orientation on $A$, $F$ is a twice-punctured torus only if $n=1$ and $F=D+A$.  Thus, up to homeomorphism, the only cable spaces that are twice-punctured torus bundles are $A(2/1)$ and $A(3/1)$.    
\end{proof}

The next result employs notation introduced in  Section~\ref{sec:surgerydiagram}.

\begin{theorem}\label{thm:bulkcable}
The manifolds $M(-1,-1)$ and $M(-1,-2)$ are the cable spaces $A(2/1)$ and $A(3/1)$, respectively.  
For each $k\in\Z$, the framed knots $K_k(-1,-1)$ and $K_k(-1,-2)$ have fibered jointly primitive presentations in these cable spaces and determine surgeries to cable spaces 
$A([1,k+1,-k])$ and $A([1,k+1,-k-1])$, respectively.
\end{theorem}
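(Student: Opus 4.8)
The plan is to dispatch the three assertions in turn, treating the identification of the two ambient manifolds as the main work and reading off the remaining claims from Lemma~\ref{lem:cables} and Lemma~\ref{lem:jpcondition}. First I would record that the fibered jointly primitive claim is immediate: since $-1,-2\in\Z$, the second clause of Lemma~\ref{lem:jpcondition} applies to both $K_k(-1,-1)$ and $K_k(-1,-2)$, so $\Sigma$ is a fiber of a fibration of $M(-1,-1)$ (resp.\ $M(-1,-2)$) over $S^1$ carrying a fibered jointly primitive presentation of the knot. In particular each $M(-1,r)$, $r\in\{-1,-2\}$, is a twice-punctured torus bundle.

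The heart of the argument is to show that these two bundles are in fact cable spaces and to identify which ones; note this is a genuine constraint, since $M(-1,r)$ is hyperbolic for generic $r$. Here I would work in the double branched cover picture used to prove Lemma~\ref{lem:jpcondition} and Lemma~\ref{lem:cables}: $M(-1,r)$ is the double cover of the solid torus exterior of the axis $C$ branched over the closed $4$--string braid $\calB_k(-1,r)$ of Figure~\ref{fig:augmentedinvolution2}. I would specialize the tangle slopes to $m=-1$ and $r\in\{-1,-2\}$ and simplify, as in Figure~\ref{fig:cablespsurg}, to exhibit $\calB_k(-1,r)$ as a periodic braid, independent of $k$ up to conjugacy. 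The closure of a periodic braid is a torus link, so its double branched cover over $C$ is Seifert fibered over the annulus; that is, $M(-1,r)$ is a cable space. Equivalently, one may specialize the surgery diagram of Figure~\ref{fig:m=-1} at $r=-1,-2$ and reduce it by Rolfsen twists and slam dunks to the standard cable-space description of Figure~\ref{fig:cablespace}, which also reads off the Seifert data directly.

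With each $M(-1,r)$ now known to be a cable space that is simultaneously a twice-punctured torus bundle, Lemma~\ref{lem:2PTcable} forces it to be either $A(2/1)$ or $A(3/1)$. To distinguish the two cases I would compute the order of the exceptional fiber, equivalently the multiplicity $m$ with which the fiber $\Sigma$ wraps over the punctured meridian disk in the class $[\Sigma]=m[D]+n[A]$, which is exactly the data isolated in the proof of Lemma~\ref{lem:2PTcable} ($2D+A$ for order $2$ versus $D+A$ for order $3$). Reading this off the simplified picture gives order $2$ for $r=-1$ and order $3$ for $r=-2$, so $M(-1,-1)\cong A(2/1)$ and $M(-1,-2)\cong A(3/1)$, as claimed.

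Finally, the surgery targets require no new work: by the $m=-1$ case of Lemma~\ref{lem:cables}, surface-framed surgery on $K_k(-1,r)$ produces $M_k^*(-1,r)=A([1,k+1,r-k+1])$, and substituting $r=-1$ and $r=-2$ yields $A([1,k+1,-k])$ and $A([1,k+1,-k-1])$ respectively. The main obstacle is the Seifert-fibered identification of the two ambient manifolds: everything hinges on verifying that the branch braid degenerates to a periodic braid precisely at $r=-1,-2$, and on correctly extracting the Seifert invariants to assign the orders $2$ and $3$ that match $A(2/1)$ and $A(3/1)$ via Lemma~\ref{lem:2PTcable}.
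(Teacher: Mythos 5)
Your proposal is correct and follows essentially the same route as the paper: both identify $\calB_k(-1,-1)$ and $\calB_k(-1,-2)$ as torus links (periodic braids) in the solid torus, deduce that the branched double cover is a cable space, pin it down via Lemma~\ref{lem:2PTcable}, and then quote Lemma~\ref{lem:jpcondition} and Lemma~\ref{lem:cables} for the fibered JP presentation and the surgered manifolds. The only place the paper is more careful is in justifying the ``cable space'' step: rather than asserting the cover is Seifert fibered over the annulus, it lifts the annulus from the braid closure to $\bdry V$ (which has solid torus complement) to an annulus splitting $M(-1,r)$ into two solid tori, and it distinguishes $A(2/1)$ from $A(3/1)$ by the order ($4$ versus $3$) of the periodic monodromy rather than by computing $[\Sigma]=m[D]+n[A]$.
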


\begin{proof}
Observe that the closed $4$--braid $\calB_k(-1,-1)$ is the $(4,-1)$--torus knot in the solid torus $V$ and hence a fiber of a Seifert fibration of $V$.   Hence its double branched cover $M(-1,-1)$ is a Seifert fibered space and a twice-punctured torus bundle. Since the braid has  periodic monodromy of order $4$, the bundle does too. As the annulus from $\calB_k(-1,-1)$ to $\bdry V$ has a solid torus complement, it lifts to an annulus separating $M(-1,-1)$ into two solid tori.  Hence $M(-1,-1)$ is a cable space and by Lemma~\ref{lem:2PTcable}, we identify it as $A(2/1)$.  We note that this can alternatively be seen from a surgery description of $M(-1,-1)$ as the exterior of the two green components in Figure~\ref{fig:firstsurgery}(a).  The transformations (b) and (c) preserve this manifold.  Following Figure~\ref{fig:cablespace}(b), this gives the cable space $A(-2/1)$ which is orientation preservingly homeomorphic to $A(2/1)$.

The closed $4$--braid $\calB_k(-1,-2)$ is the $(3,-1)$--torus knot in $V$ together with the core of $V$ as the fourth strand. As such, this braid is a union of a regular fiber and an exceptional fiber in a Seifert fibration of $V$. 
 Hence its double branched cover $M(-1,-2)$ is a Seifert fibered space and a twice-punctured torus bundle. Since the braid has  periodic monodromy of order $3$, the bundle does too. 
As the annulus from $\calB_k(-1,-2)$ to $\bdry V$ has a solid torus complement, it lifts to an annulus separating $M(-1,-2)$ into two solid tori.  Hence $M(-1,-2)$ is a cable space and by Lemma~\ref{lem:2PTcable} we identify it as $A(3/1)$ up to homeomorphism.

Alternatively, we can determine the orientation of $M(-1,-2)$ from its surgery description as the exterior of the two green components in Figure~\ref{fig:firstsurgery}(f).  The transformations through (k) preserve this manifold.  First a twist along the left green component changes the linking brown component's surgery coefficient from $2/3$ to $-1/3$.  Then a handleslide of this $-1/3$ sloped component over the $0$--framed component produces the linear five-component chain link exterior with fillings $-1/3$, $0$, and $2$ on the second, fourth, and fifth components.  A slam dunk collapses the fifth into the fourth producing a meridian of the third with surgery coefficient $-1/2$.  Since the third component is unfilled, this performing this $-1/2$ surgery doesn't affect the manifold. Following Figure~\ref{fig:cablespace}(b), this gives the cable space $A(3/1)$.

\medskip

Lemma~\ref{lem:jpcondition} shows that the framed knots $K_k(-1,-1)$ and $K_k(-1,-2)$ have fibered jointly primitive presentations in the manifolds $M(-1,-1)$ and $M(-1,-2)$ respectively.  The framed surgeries on these knots produce the manifolds $M^*_k(-1,-1)$ and $M^*_k(-1,-2)$ which, by Lemma~\ref{lem:cables}, are the cable spaces $A([1,k+1,-k])$ and $A([1,k+1,-k-1])$.
\end{proof}

\begin{theorem}\label{thm:bulk} 
	For $b,n,k\in\Z$ and $(r,s) = (-1, -4+\tfrac1n)$ or $(-2,-3+\tfrac1n)$, the framed knots 
	$K_k(-1,r,s,b)$
	are LJP knots in the lens spaces $Y(-1,r,s,b)$ and give surgeries to the lens spaces $Y^*_k(-1,r,s,b)$ as detailed in the first two rows of Table~\ref{table:cablespacemain}.  Furthermore, the knots $K_k(-1,r,s,b)$ are generically hyperbolic and asymmetric.

	Allowing $n=\infty$  or $b=\infty$, these framed knots extend to LJP knots or MJP knots (according to whether $b\in\Z$ or $b=\infty$) in $Y(-1,r,s,b)$ and give surgeries to $Y^*_k(-1,r,s,b)$ as detailed in the rest of Table~\ref{table:cablespacemain}.
\end{theorem}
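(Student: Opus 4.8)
The plan is to assemble the structural results already in place. The hypotheses $m=-1$ and $r\in\{-1,-2\}\subset\hatQ$ put us squarely in the regime of Lemma~\ref{lem:jpcondition}, so $K_k(-1,r)$ carries a jointly primitive presentation on $\Sigma$ in $M(-1,r)$ for every $k\in\Z$. Because $\bdry\Sigma$ meets $C_1$ with slope $+1$, filling $C_1$ along $1+\tfrac1b$ with $b\in\Z$ makes $\bdry\Sigma$ longitudinal there, while filling $C_0$ along $1+s$ completes the ambient manifold. Lemma~\ref{lem:ljpandmjp} then yields at once that $K_k(-1,r,s,b)$ is LJP with $Y^*_k(-1,r,s,b)$ a lens space when $b\in\Z$, and is MJP with $Y^*_k(-1,r,s,\infty)$ a connected sum of lens spaces when $b=\infty$. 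This settles the LJP/MJP dichotomy and the qualitative nature of $Y^*_k$; the remaining content of the theorem is the explicit identification of the two closed fillings recorded in Table~\ref{table:cablespacemain}, together with the geometric conclusion.

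First I would pin down $Y^*_k(-1,r,s,b)$. By Lemma~\ref{lem:cables}, framed surgery on $K$ turns $M(-1,r)$ into the cable space $M^*_k(-1,r)=A([1,k+1,r-k+1])$, in which the image of $\Sigma$ is a spanning annulus. The lens space $Y^*_k(-1,r,s,b)$ is then obtained by filling the two boundary tori of this cable space along the slopes dual to $(1+s)$ on $C_0$ and $(1+\tfrac1b)$ on $C_1$; expanding these fillings in the continued-fraction/chain-link notation of Section~\ref{sec:surgerydiagram} and simplifying by slam dunks produces the entries in the $Y^*_k$ column. The ambient manifold $Y(-1,r,s,b)$ is handled in parallel but more directly: with $m=-1$ the surgery diagram of Figure~\ref{fig:Ymrsb} degenerates to a surgery on a chain link, and the calculus of continued fractions identifies it as the lens space listed in the $Y$ column --- note that, unlike in Theorem~\ref{thm:jpequivalence}, here $Y$ itself must be verified to be a lens space. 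Carrying out both computations for $(r,s)=(-1,-4+\tfrac1n)$ and $(-2,-3+\tfrac1n)$, and separately for the limiting values $s=-4,-3$ (i.e.\ $n=\infty$) and for $b=\infty$, fills in all rows of the table.

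For the geometric half, I would invoke Lemma~\ref{lem:bulksymmetry}: for $n,b,k\in\Z$ of suitably large magnitude and $(r,s)$ as above, $X_k(-1,r,s,b)$ is hyperbolic and asymmetric. This is exactly the claim that $K_k(-1,r,s,b)$ is generically hyperbolic and asymmetric. The mechanism is that the exchanging involution $\tau$ of $M$ swaps the two fillings $1+s$ and $1+\tfrac1b$ on the cusps formerly called $C_0$ and $C_1$; since these slopes are distinct for all admissible $n,b$, the relevant multislope is symmetry-breaking in the sense of Definition~\ref{dfn:symmetrybreak}, and fully generic once the parameters are large, whence Theorem~\ref{thm:fillinglengths} forces the filling to be asymmetric. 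The extensions to $n=\infty$ or $b=\infty$ demand only the bookkeeping already described: the LJP/MJP status still follows from Lemma~\ref{lem:ljpandmjp}, and the lens-space (or connected-sum) identifications come from the same surgery calculus evaluated at the limiting parameter.

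The main obstacle is the explicit surgery-calculus arithmetic of the second paragraph rather than any conceptual difficulty. Identifying the two closed fillings correctly --- tracking orientations so that the $L[\cdots]$ expressions match the stated $L(p,q)$ entries of the table, and reducing the nested continued fractions arising from the cable-space filling $A([1,k+1,r-k+1])$ by slam dunks --- is where care is needed; by contrast the LJP property, the cable-space structure, and the asymmetry are delivered ready-made by Lemmas~\ref{lem:jpcondition}, \ref{lem:cables}, \ref{lem:ljpandmjp}, and \ref{lem:bulksymmetry}.
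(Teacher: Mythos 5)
Your proposal follows the paper's proof almost exactly: Lemmas~\ref{lem:jpcondition} and \ref{lem:ljpandmjp} deliver the LJP/MJP dichotomy, surgery calculus identifies $Y(-1,r,s,b)$ and $Y^*_k(-1,r,s,b)$ (the paper does this by direct Kirby calculus in Figures~\ref{fig:firstsurgery} and \ref{fig:secondsurgery} rather than by filling the boundary of the cable space of Lemma~\ref{lem:cables}, but this is the same computation packaged differently), and Lemma~\ref{lem:bulksymmetry} gives generic hyperbolicity and asymmetry for $b,n,k\in\Z$, via exactly the symmetry-breaking mechanism you describe. The only pieces you leave unaddressed are two small claims in the extended cases that the paper treats explicitly: the strong invertibility of the $b=\infty$ rows of Table~\ref{table:cablespacemain} (the \S\ marks), which comes from Lemma~\ref{lem:strinv}, and the hyperbolicity/asymmetry assertion when $n=\infty$, for which the paper invokes Lemma~\ref{lem:bulkreduciblesymmetry} rather than Lemma~\ref{lem:bulksymmetry} (the latter requires $n\in\Z$, so it cannot simply be ``evaluated at the limiting parameter'').
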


\begin{table}
	\small
	\caption{The knots $K_k(-1,r,s,b)$ with parameters below are obtained from the framed knots $K_k(-1,-1)$ and $K_k(-1,-2)$. They give surgeries between the manifolds $Y(-1,r,s,b)$ and $Y^*_k(-1,r,s,b)$ which are lens spaces or connected sums of lens spaces.  Where marked with \S, the knots $K_k(-1,r,s,b)$ are strongly invertible by Lemma~\ref{lem:strinv}.
	}
	\label{table:cablespacemain}
	\begin{tabular}[t]{@{}llrrrrr@{}}
		\toprule
		$Y(-1,r,s,b)$ &   $Y^*_k(-1,r,s,b)$  & $r$ & $s$ & $b$  & \\
		\midrule
		$L[2,-n,4,-b]$ &   $L[-n,-4,-b-1,k,-k]$     & $-1$  &  $-4+\frac{1}{n}$  &  $b$  \\
		$L[3,1-n,3,-b]$ &   $L[-n,-3,-b-1,k,-k-1]$     & $-2$  &  $-3+\frac{1}{n}$  &  $b$  \\
		\midrule
		$L[2]\#L[4,-b]$ &   $L[-4,-b-1,k,-k]$      & $-1$  &  $-4$  &  $b$  \\
		$L[3]\#L[3,-b]$ &   $L[-3,-b-1,k,-k-1]$      & $-2$  &  $-3$  &  $b$  \\
		\midrule
		$L[2,1-n,4]$ &   $L[-n,-4]\#L[k,-k]$      & $-1$  &  $-4+\frac{1}{n}$  &  $\infty$  & \S \\
		$L[3,1-n,3]$ &   $L[-n,-3]\#L[k,-k-1]$      & $-2$  &  $-3+\frac{1}{n}$  &  $\infty$ & \S  \\
		\midrule
		$L[2]\#L[4]$ &   $L[-4]\#L[k,-k]$     & $-1$  &  $-4$  &  $\infty$  & \S \\
		$L[3]\#L[3]$ &   $L[-3]\#L[k,-k-1]$       & $-2$  &  $-3$  &  $\infty$ & \S  \\	    
		\bottomrule
	\end{tabular}
\end{table}

\begin{proof}[Proof of Theorem~\ref{thm:bulk}]
Let us take $e \in \{0,1\}$ so that $(r,s) = (-1, -4+\tfrac1n)$ or $(-2,-3+\tfrac1n)$ may be rewritten as $(r,s) = (-1-e,-4+e+1/n)$.  Then we may consider both families of knots together as $K_k(-1, -1-e, -4+e+\tfrac1n, b)$.
Since  $m,r \in \Z$ and $b\in\hatZ$, Lemmas~\ref{lem:jpcondition} and \ref{lem:ljpandmjp} imply that $K_k(-1, -1-e, -4+e+\tfrac1n, b)$ has a fibered jointly primitive presentation that is longitudinal if $b\in\Z$ and meridional if $b=\infty$.
Kirby calculus demonstrates that $Y(-1,-1-e,-4+e+\tfrac{1}{n}, b)$ and $Y^*_k(-1,-1-e,-4+e+\tfrac{1}{n}, b)$ are the stated manifolds are given in Figures~\ref{fig:firstsurgery} and \ref{fig:secondsurgery}.  

The claims of generic hyperbolicity and asymmetry of these knots for $b,n,k \in \Z$ are established by Lemma~\ref{lem:bulksymmetry}.  When $b,k\in\Z$ but $n=\infty$, the generic hyperbolicity and asymmetry is established by Lemma~\ref{lem:bulkreduciblesymmetry}.   Finally, when $b=\infty$, the knots $K_k(-1,r,s,\infty)$ are strongly invertible by Lemma~\ref{lem:strinv}. 
\end{proof}

\begin{figure}
\includegraphics[width=\textwidth]{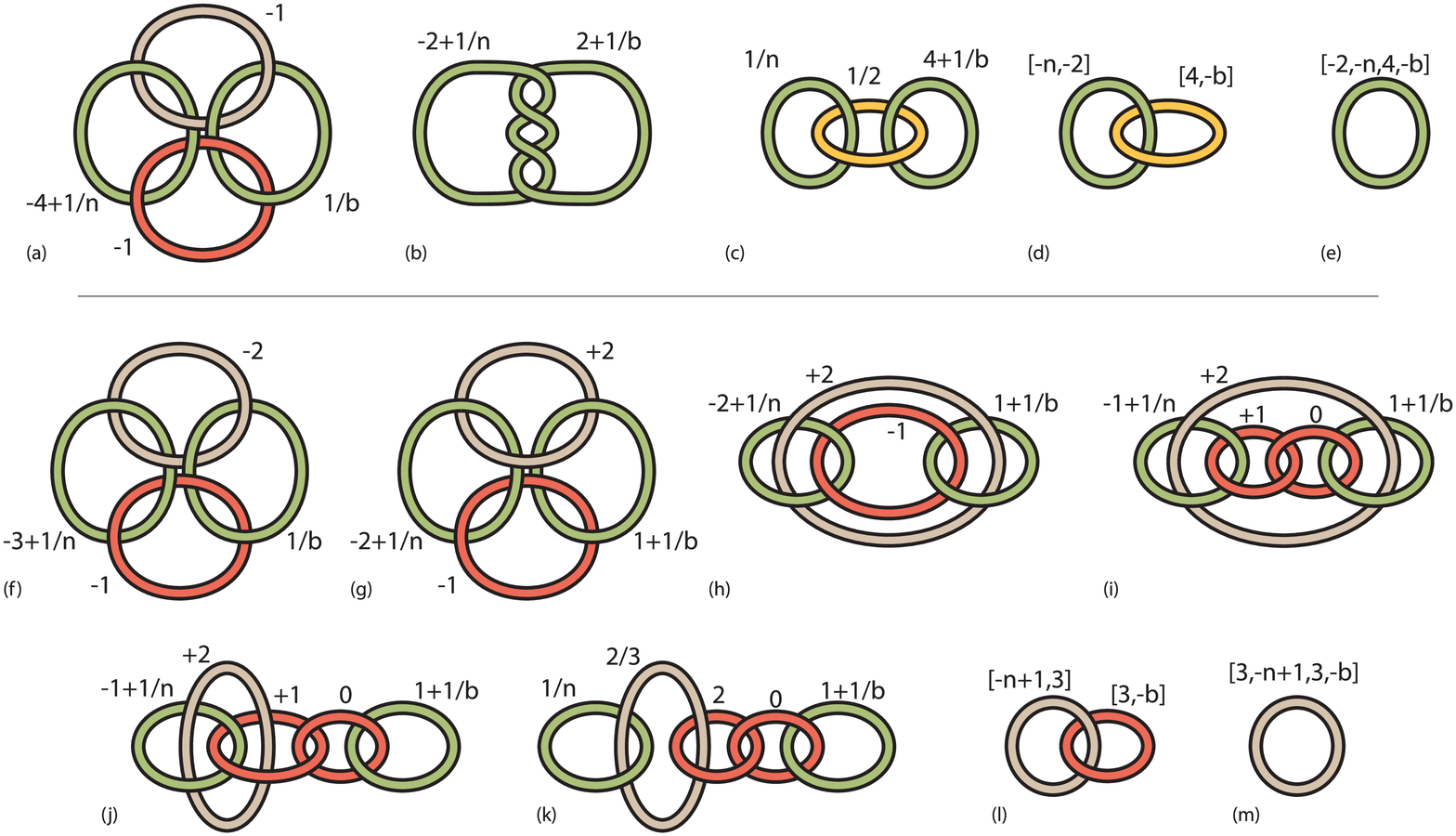}
\caption{The manifolds $Y(-1,-1,-4+1/n,b)$ and $Y(-1,-2,-3+1/n,b)$ are determined through surgery calculus, starting from the surgery diagram of $Y(m,r,s,b)$ of Figure~\ref{fig:Ymrsb}:  For $Y(-1,-1,-4+1/n,b)$, we set $m=r=-1$ and $s=-4+1/n$ in (a).  Then the two $(-1)$--framed unknots are blown down to get (b).  Rolfsen twists give (c) and a slam dunk gives (d) which we may rewrite as (e).  For $Y(-1,-2,-3+1/n,b)$, we set $m=-1$, $r=-2$, and $s=-3+1/n$ in (f). A blow up on a clasp followed by a blow down on the image of the $(-2)$--framed unknot produces (g).  After an isotopy to (h), a blow up gives (i) and then a handleslide gives (j).  A Rolfsen twist produces (k) and then twists and  slam dunks give (l) which we may rewrite as (m).
\label{fig:firstsurgery}  }
\end{figure}

\begin{figure}[h!]
\centering
\includegraphics[width=\textwidth]{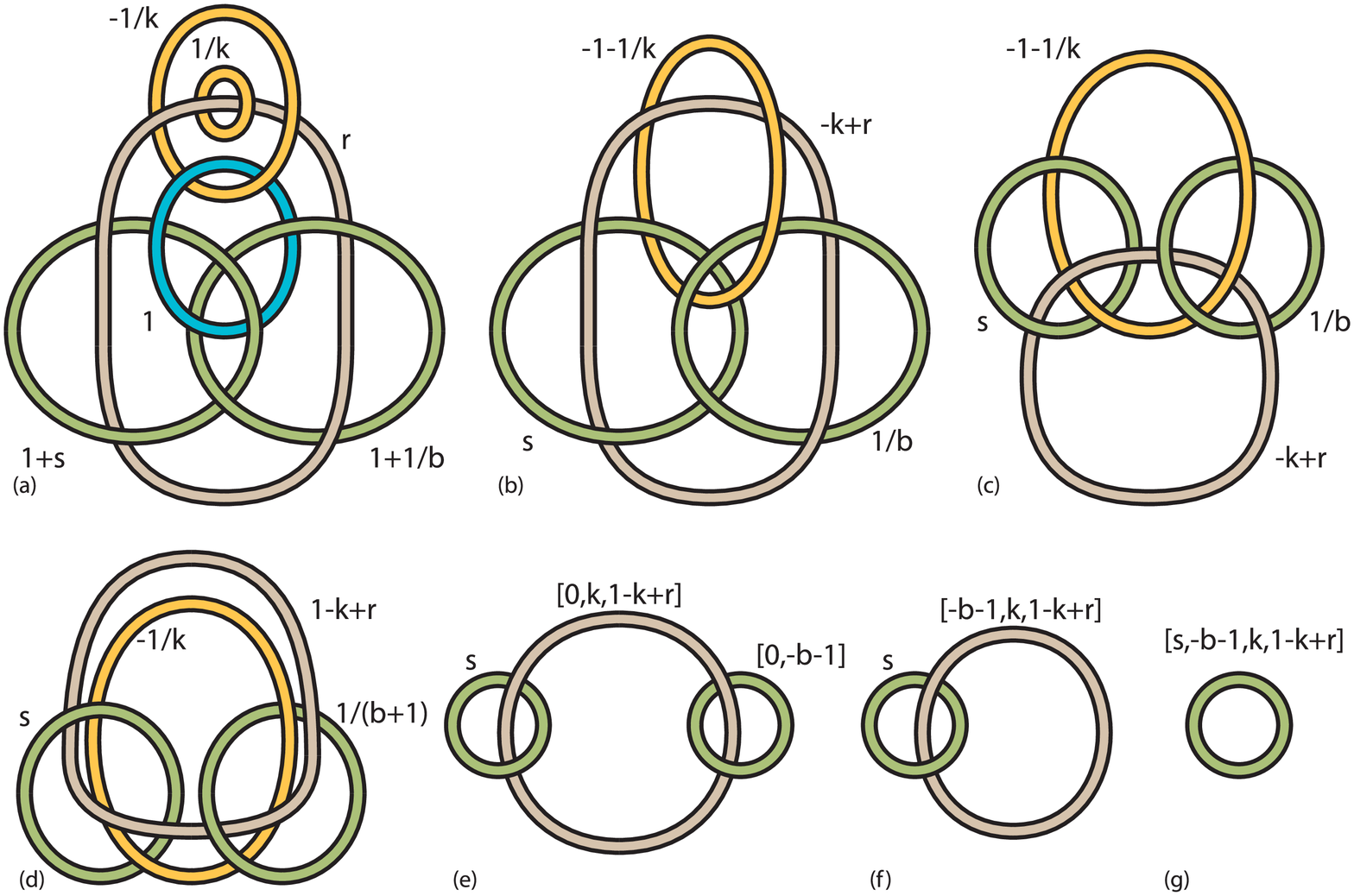}
\caption{With $m=-1$, framed surgery on $K_k(-1,r,s,b)$ produces the manifold $Y^*_k(-1,r,s,b)=L[s,-b-1,k,1-k,0,r]$: Starting from the surgery diagram of $K_k(m,r,s,b)$ of  Figure~\ref{fig:surgerylink}, setting $m=-1$ causes the trivial surgery on $J$, so it is omitted in (a).  The for the framing of our knot, the surgery coefficient $+1$ is also placed on $K$.  Next, twist $L_+$ into $R$ and blow down $K$ to get (b).  An isotopy gives (c). A Rolfsen twist of the lower right component gives (d) in which two components are parallel.  The merging of these two components gives (e) in which continued fraction notation is now used on the central component.  With $b \in \Z$, twist the righthand component into the central to get (f). Our notation convention allows this to be rewritten as (g).   If $b = \infty$ so that the righthand component in (e) has coefficient $0$, a handleslide of the lefthand component splits the link and a subsequent slam dunk yields the two component unlink with surgery coefficients $s$ and $[k,1,-k+r]$.  In either case, $Y^*_k(-1,r,s,b) = L[s,-b-1,k,1-k,0,r]$.
}
\label{fig:secondsurgery}
\end{figure}

\subsection{LJP knots and the Whitehead link}

\begin{lemma}\label{lem:whitehead}
	For $r=0$ and $s\in\Z$, the manifold $Y(m,0,s,b)$ is $(m, s+1/b)$--surgery on the Whitehead link.  Furthermore, if $m \in \Z$, then for each $k\in\Z$ it contains the fibered JP framed knot $K_k(m,0,s,b)$ on which surgery produces the manifold $Y^*_k(m,0,s,b) = L[-k,m,k-1,-b-1,s]$.
\end{lemma}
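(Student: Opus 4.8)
The plan is to establish both assertions by surgery calculus, reusing the structural lemmas already in hand. The first assertion is a purely Kirby-calculus identification of the ambient manifold, independent of $k$, while the second combines Lemma~\ref{lem:jpcondition} (to produce the knot) with Lemma~\ref{lem:cables} (to identify the surgered manifold) together with one further filling computation.

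For the Whitehead-link identification, I would start from the twisted $4$--chain presentation of $Y(m,r,s,b)$ recorded in Figure~\ref{fig:Ymrsb}, obtained by performing the trivial surgery on $K$ and absorbing $L_\pm$ as in the passage preceding that figure; since the ambient manifold is independent of $k$, no choice of $k$ intervenes. Setting $r=0$ makes the image of $R$ a $0$--framed unknot, which I would remove by a slam-dunk after a handleslide presents a neighbouring component as its meridian. A further short sequence of Rolfsen twists and slam-dunks then reduces the coefficient on the image of $J$ (namely $1-\tfrac1{m+1}$) to the integer $m$ and merges the two components carrying $1+s$ and $1+\tfrac1b$ (the images of $C_0$ and $C_1$) into a single framed curve with coefficient $s+\tfrac1b$, the two $+1$'s cancelling against the twist region vacated by $R$. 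What remains is a two-component link which, after a final isotopy, I would recognise as the Whitehead link with framings $m$ and $s+\tfrac1b$. The bookkeeping here is the delicate part: the Whitehead link is not a chain link, so the clasp must be created or destroyed explicitly, and one must confirm the residual framings are exactly $(m,\,s+\tfrac1b)$ rather than off by a twist.

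For the second assertion, fibered joint primitivity of $K_k(m,0,s,b)$ is immediate: since $m\in\Z$ and $r=0\in\Z$, Lemma~\ref{lem:jpcondition} gives a fibered jointly primitive presentation of $K_k(m,0)$ on $\Sigma$ in $M(m,0)$, and by the reasoning of Lemma~\ref{lem:ljpandmjp} this presentation persists after refilling the two boundary tori, so $K_k(m,0,s,b)$ is fibered JP in $Y(m,0,s,b)$. To identify $Y^*_k(m,0,s,b)$, I would invoke Lemma~\ref{lem:cables} with $r=0$ to obtain $M^*_k(m,0)=A([1,k+1,1,m+1,-k])$, presented as a chain-link exterior whose two unfilled end components carry the meridian--longitude framings of $C_0$ and $C_1$. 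Since surgery on $K$ does not disturb $\bdry\nbhd(C_0)$ or $\bdry\nbhd(C_1)$, recovering $Y^*_k(m,0,s,b)$ amounts to appending the surgery slopes $1+s$ and $1+\tfrac1b$ to the two ends of this chain. A sequence of slam-dunks and twists then collapses the two internal $1$'s in $[1,k+1,1,m+1,-k]$ and absorbs the end coefficients, yielding the linear chain $L[-k,m,k-1,-b-1,s]$; this matches and generalises the $m=-1$ computation recorded in Figure~\ref{fig:secondsurgery}. Alternatively, one can bypass the cable space and run the Kirby calculus directly on $K\cup\calL$ with $K$ carrying its $+1$ framing, paralleling Figure~\ref{fig:secondsurgery} with $m$ now a free integer.

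I expect the main obstacle to be the coefficient bookkeeping in both reductions---confirming that the Whitehead-link framings are precisely $(m,\,s+\tfrac1b)$, and that the end-filling of the cable-space chain simplifies to the stated continued fraction $[-k,m,k-1,-b-1,s]$. In particular, because longitudinal joint primitivity forces $\bdry\Sigma$ to be a longitude of $C_1$, care is needed in relating the $C_0,C_1$ meridian framings to the framing in which the cable-space chain $A([1,k+1,1,m+1,-k])$ is written, so that the filling slopes $1+s$ and $1+\tfrac1b$ contribute the entries $s$ and $-b-1$ and not some twisted variants.
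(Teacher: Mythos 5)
Your argument is essentially the paper's for the first two claims: the Whitehead-link identification is carried out by the same Kirby-calculus reduction of the diagram in Figure~\ref{fig:Ymrsb} (this is exactly the content of Figure~\ref{fig:Ym0sb}), and fibered joint primitivity of $K_k(m,0,s,b)$ is deduced from Lemma~\ref{lem:jpcondition} just as in the paper. The one genuine divergence is your primary route to $Y^*_k(m,0,s,b)$: you propose to fill the two ends of the cable space $M^*_k(m,0)=A([1,k+1,1,m+1,-k])$ supplied by Lemma~\ref{lem:cables}, whereas the paper runs the Kirby calculus directly on $K\cup\calL$ with the $+1$ framing on $K$ (Figure~\ref{fig:cabledgofk}) --- which is precisely the alternative you offer at the end. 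The cable-space route is conceptually attractive because it exhibits $Y^*_k(m,0,s,b)$ as a Seifert filling and reuses an existing lemma, but it is not a free lunch: Lemma~\ref{lem:cables} identifies $M^*_k(m,0)$ only up to homeomorphism via a tangle computation in the quotient of the involution, and it does not record how the meridian--longitude coordinates of $C_0$ and $C_1$ sit relative to the chain-link presentation of $A([1,k+1,1,m+1,-k])$. Without that boundary bookkeeping one cannot conclude that the filling slopes $1+s$ and $1+\tfrac1b$ contribute the entries $s$ and $-b-1$ of the continued fraction rather than twisted variants; you correctly flag this as the delicate point, and the paper sidesteps it entirely by the direct diagrammatic computation. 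In short, your fallback route \emph{is} the paper's proof, while your preferred route would require an additional framing-tracking refinement of Lemma~\ref{lem:cables} to be made rigorous.
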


\begin{figure}[h!]
	\includegraphics[width=\textwidth]{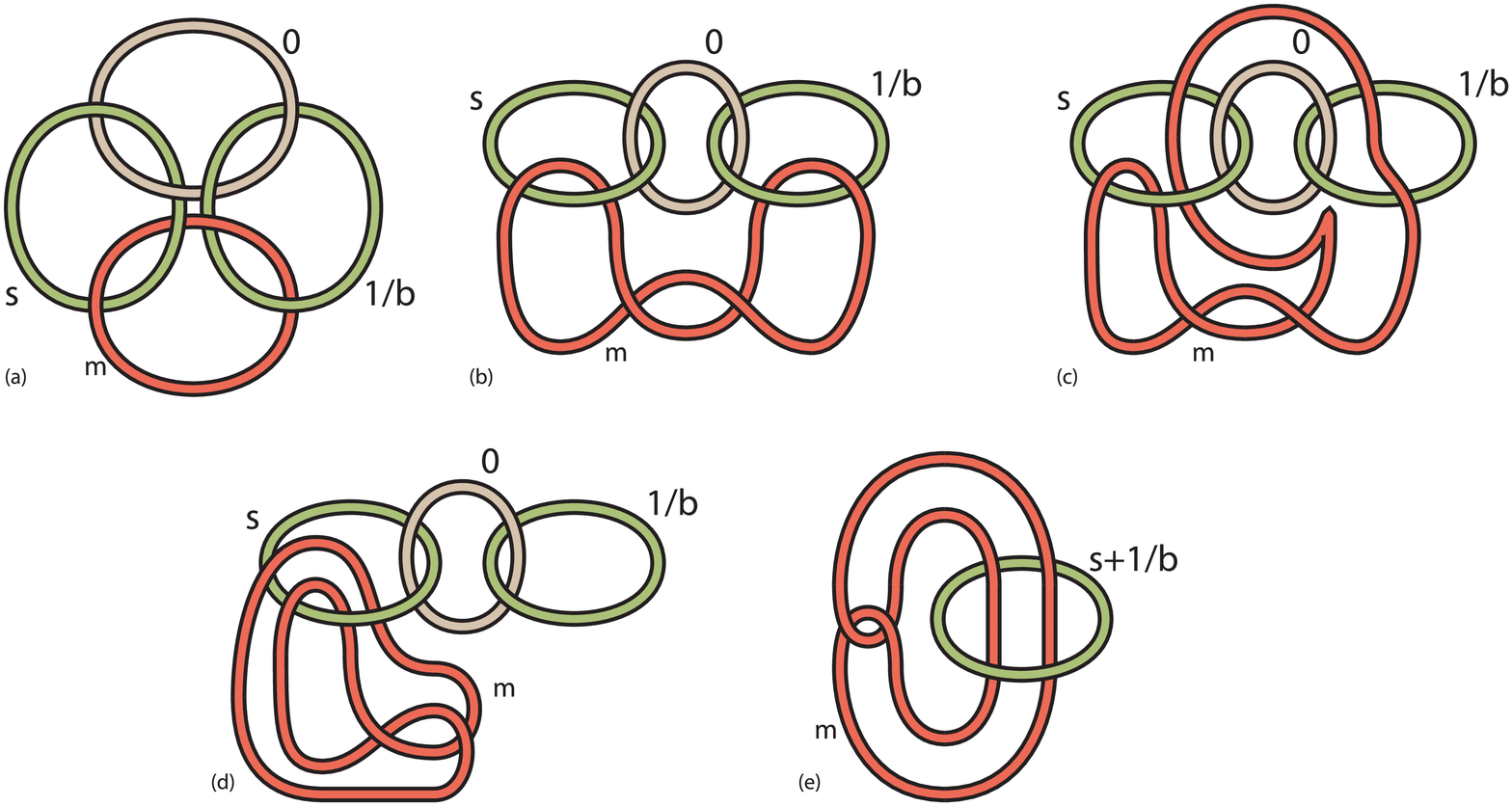}
	\caption{For $s \in\Z$, the manifold $Y(m,0,s,b)$ is $(m, s+1/b)$--surgery on the Whitehead link: Beginning with the surgery diagram of $Y(m,r,s,b)$ of Figure~\ref{fig:Ymrsb} and setting $r=0$ in (a), an isotopy produces (b). A handleslide over the $0$--framed component gives (c) and a further isotopy gives (d).  Assuming $s\in\Z$, two slam dunks then produce (e).  }
	\label{fig:Ym0sb}
\end{figure}

\begin{figure}[h!]
\includegraphics[width=\textwidth]{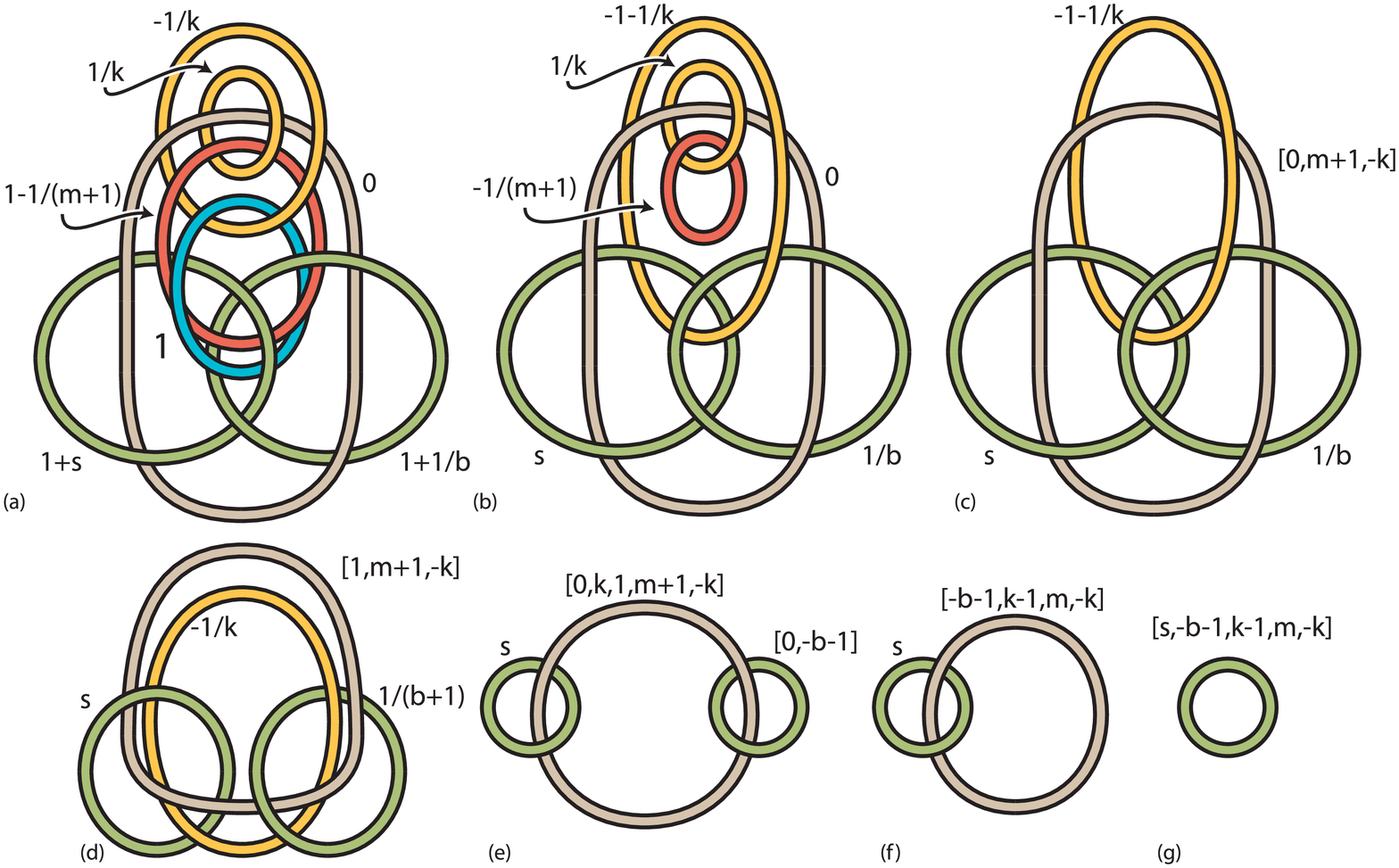}
\caption{With $r=0$ and  $m,s \in\Z$, framed surgery on $K_k(m,0,s,b)$ produces the manifold $Y^*_k(m,0,s,b) = L[-k,m,k-1,-b-1,s]$: Starting from the surgery diagram of $K_k(m,r,s,b)$ of  Figure~\ref{fig:surgerylink} in (a), we set $s=0$ and place the $+1$ framing on $K$.  Blowing down  $K$ produces (b).  A Rolfsen twist of $J$ into $L_+$ followed by a slam-dunk into $R$ gives (c).  A twist along $C_1$ with an isotopy gives (d) and absorbing the $-1/k$ twists of $C_1$ into the now parallel component $R$ gives (e). A Rolfsen twist (along with blowing down the $+1$ in the continued fraction) and then a slam-dunk yields (f) and (g).  }
\label{fig:cabledgofk}
\end{figure}

\begin{proof}
Through isotopies and Kirby calculus, Figure~\ref{fig:Ym0sb} shows how setting $r=0$ and $s\in\Z$ in the surgery description of $Y(m,r,s,b)$ given in Figure~\ref{fig:Ymrsb} yields a description of  $Y(m,0,s,b)$ as $(m, s+1/b)$--surgery on the Whitehead link.  If $m \in\Z$, then by Lemma~\ref{lem:jpcondition}  the knot $K_k(m,0)$ is JP in the twice-punctured torus bundle $M(m,0)$.  Hence $K_k(m,0,s,b)$ is fibered JP in $Y(m,0,s,b)$.

Next, Figure~\ref{fig:cabledgofk} shows that when $r=0$ and $m,s \in \Z$, the result of the framed surgery on $K_k(m,0,s,b)$ is the manifold $Y^*_k(m,0,s,b) = L[-k,m,k-1,-b-1,s]$.
\end{proof}

\begin{theorem}\label{thm:bulk2}
	For $k,b\in\Z$ and $(m,s) \in \{(-1,-6), (-2,-4), (-3,-3)\}$, the framed knot $K_k(m,0,s,b)$ is a fibered LJP knot in the lens space $Y(m,0,s,b)$ and framed surgery produces the lens space $Y^*_k(m,0,s,b)$ according to Table~\ref{table:cabledgofk}.
\begin{table}[h!]
	\caption{The lens spaces of Theorem~\ref{thm:bulk2}. }
	\label{table:cabledgofk}
	\small
		\begin{tabular}[t]{@{}llrr@{}}
			\toprule
   $Y(m,0,s,b)$ &   $Y^*_k(m,0,s,b)$  &$m$ & $s$  \\
	\midrule
 $L(6b-1,2b-1)$ & 	$L[-k,-1,k-1,-b-1,-6]$&	$-1$&$-6$ 	 \\
 $L(8b-2,2b-1)$ & 	$L[-k,-2,k-1,-b-1,-4]$	&$-2$&$-4$	 \\
 $L(9b-3,3b-2)$ & 	$L[-k,-3,k-1,-b-1,-3]$&$-3$&$-3$	 \\
			\bottomrule
		\end{tabular}
\end{table}

The knots $K_k(-1,0,-6,b)$, $K_k(-2,0,-4,b)$, and $K_k(-3,0,-3,b)$ are generically hyperbolic and asymmetric. 
\end{theorem}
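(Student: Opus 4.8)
The plan is to treat the three assertions of the theorem separately. The fibered LJP property and the surgery descriptions follow directly from the structural lemmas already proved for the family $K_k(m,r,s,b)$, and the generic hyperbolicity and asymmetry is exactly the content of Lemma~\ref{lem:bulk2symmetry}; the only genuinely computational ingredient is the identification of the ambient manifolds $Y(m,0,s,b)$ with the specific lens spaces in the first column of Table~\ref{table:cabledgofk}.

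For the fibered LJP claim and the second column of the table, I would argue as follows. Each of the three pairs has $m\in\{-1,-2,-3\}$ and $r=0$, so $m,r\in\Z$ and Lemma~\ref{lem:jpcondition} (as also recorded in Lemma~\ref{lem:whitehead}) shows $K_k(m,0)$ is fibered jointly primitive in the twice-punctured torus bundle $M(m,0)$; hence $K_k(m,0,s,b)$ is fibered jointly primitive in $Y(m,0,s,b)$. Since $s\in\{-6,-4,-3\}\subset\Z$ and $b\in\Z$, Lemma~\ref{lem:ljpandmjp} upgrades this to the longitudinal case, so $K_k(m,0,s,b)$ is LJP and $Y^*_k(m,0,s,b)$ is a lens space; moreover Lemma~\ref{lem:whitehead} gives the explicit form $Y^*_k(m,0,s,b)=L[-k,m,k-1,-b-1,s]$, and substituting the three pairs $(m,s)$ reproduces the second column verbatim.

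To pin down the first column, I would use Lemma~\ref{lem:whitehead} to present $Y(m,0,s,b)$ as $(m,\,s+\tfrac1b)$--surgery on the Whitehead link and then continue the Kirby-calculus reduction of Figure~\ref{fig:Ym0sb} until the diagram becomes a linear chain link, reading off the lens space via a continued-fraction computation. A convenient consistency check is the homology order: as the Whitehead link has linking number zero, the slopes $(m,(sb+1)/b)$ give $|H_1|=|m(sb+1)|$, which evaluates to $6b-1$, $8b-2$, and $9b-3$, matching the orders of the three lens spaces; the remaining parameters $2b-1$, $2b-1$, and $3b-2$ fall out of the same reduction.

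Finally, the generic hyperbolicity and asymmetry of $K_k(-1,0,-6,b)$, $K_k(-2,0,-4,b)$, and $K_k(-3,0,-3,b)$ is immediate from Lemma~\ref{lem:bulk2symmetry}, whose hypotheses are exactly these three pairs $(m,s)$ with $b,k\in\Z$ of suitably large magnitude. I expect the main obstacle to be the bookkeeping in the chain-link reduction of the third step: tracking the framings through the Rolfsen twists and slam dunks carefully enough to recover not merely the order but the precise second parameter $q$ of each $L(p,q)$, and to confirm that each of these surgeries is genuinely a lens space rather than only a rational homology sphere with cyclic first homology.
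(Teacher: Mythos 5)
Your proposal is correct and follows the same skeleton as the paper's proof: Lemma~\ref{lem:whitehead} supplies both the Whitehead-link surgery description of $Y(m,0,s,b)$ and the fibered JP presentation together with the formula $Y^*_k(m,0,s,b)=L[-k,m,k-1,-b-1,s]$, Lemma~\ref{lem:ljpandmjp} upgrades JP to LJP once $b\in\Z$, and Lemma~\ref{lem:bulk2symmetry} handles hyperbolicity and asymmetry. The one place you diverge is the identification of $Y(m,0,s,b)$ with the specific lens spaces in the first column: you propose to push the Kirby calculus of Figure~\ref{fig:Ym0sb} through to a linear chain link and read off $L(p,q)$ directly (with the $|H_1|=|m(sb+1)|$ check, which is correct), whereas the paper simply cites Martelli--Petronio's classification of exceptional fillings of the Whitehead link. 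The citation buys something your verification does not: it shows that $(m,s)\in\{(-1,-6),(-2,-4),(-3,-3)\}$ are the \emph{only} integer pairs for which $Y(m,0,s,b)$ is a lens space for all $b\in\Z$, which is how the paper motivates the hypothesis rather than merely confirming it. Conversely, your direct computation would be more self-contained and would nail down the second parameter $q$ and the orientation explicitly, a point on which the Martelli--Petronio tables are only reliable up to homeomorphism (as the paper itself cautions in Appendix~\ref{sec:magic}). Either route is legitimate; just be aware that the chain-link reduction you defer to the end is a genuine, if routine, computation that still has to be carried out to justify the first column.
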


\begin{proof}
From Lemma~\ref{lem:whitehead}, $Y(m,0,s,b)$ is $(m, s+1/b)$--surgery on the Whitehead link.  
This is a lens space if and only if  $(m,s+1/b)$ or $(s+1/b,m)$ is $(-1, -6+1/n)$, $(-2, -4+1/n)$, $(-3, -3+1/n)$, or $(p/q, \infty)$ for some $n \in \Z$ by \cite[Table A.5]{MP} (see also \cite{baker-optils}).
 
Assuming $m \in \Z$, Lemma~\ref{lem:whitehead} also shows that $K_k(m,0,s,b)$ is (fibered) JP in $Y(m,0,s,b)$.  (This assumption further implies that $m$--filling on the first component of the Whitehead link makes the exterior of the second component into a once-punctured torus bundle, cf.\ \cite[Propostion 3]{HMW}.)    
Finally, for the knot to be LJP so that $Y^*_k(m,0,s,b)$ is a lens space,  Lemma~\ref{lem:ljpandmjp} shows that we also need $b \in \Z$.  Thus if $Y(m,0,s,b)$ is to be a lens space for any $b \in \Z$, we must take $(m,s) \in \{(-1,-6),(-2,-4),(-3,-3)\}$. 

The claims of hyperbolicity and asymmetry follow from Lemma \ref{lem:bulk2symmetry}.
\end{proof}

\begin{addendum}\label{add:t12685}
	For $k\in\Z$ and $(m,s,b)$ as detailed in Table~\ref{table:cabledgofk2}, the framed knot $K_k(m,0,s,b)$ has a fibered longitudinal jointly primitive presentation in the lens space $Y(m,0,s,b)$ and framed surgery produces the lens space $Y^*_k(m,0,s,b)$ according to Table~\ref{table:cabledgofk2}.
	
\end{addendum}

\begin{table}[h!]
\small
	\caption{
	The lens spaces of Addendum~\ref{add:t12685}.  Where marked with \S, the knots $K_k(m,0,s,b)$ are strongly invertible by Lemma~\ref{lem:strinv} (and also by direct computation). 
	The remaining knots $K_k(m,0,s,b)$ are unmarked and are asymmetric in the sense that for most large values of the free parameters we can fill by fully-generic, symmetry-breaking multislopes as defined in \S\ref{sec:isometrygroupsfillings}. 
	}
	
	\label{table:cabledgofk2}
		\begin{tabular}[t]{@{}llrrrr@{}}
			\toprule
			$Y(m,0,s,b)$ &   $Y^*_k(m,0,s,b)$ & $m$ &  $s$ & $b$  \\
			\midrule
			$L(11,3)$ & 		$L[-k+1,k,1,-5]$	& 	$-1$&$-5$&$-2$ 	\\
			$L(13,5)$ & 		$L[ -k+1,k,-3, -7]$	& 	$-1$&$-7$&$2$ 	\\
			\midrule
			$L(14,3)$ & 		$L[-k,-2,k-1,1,-3]$	& 	$-2$&$-3$&$-2$  \\		
			$L(18,5)$ & 		$L[-k,-2,k-1,-3,-5]$& 	$-2$&$-5$&$2$ 	\\
			\midrule
			$L(15,4)$ & 		$L[-k,-3,k-1,1,-4]$	& 	$-3$&$-2$&$-2$ & \S \\	
			$L(21,8)$ & 		$L[-k,-3,k-1,-3,-2]$& 	$-3$&$-4$&$2$ 	\\
			\bottomrule\\
		\end{tabular}

		\begin{tabular}[t]{@{}llrrrr@{}}
			\toprule
			$Y(m,0,s,b)$ &   $Y^*_k(m,0,s,b)$ & $m$ & $s$ & $b$  \\
			\midrule
			$L(6,1)$   &      $L[-k,-2,k-1,0,-2]$&		$-2$&$-2$&$-1$ & \S\\
			$L(6,1)$   &      $L[-k,-2,k-1,-2,-4]$& 	$-2$&$-4$&$1$  \\
			$L(12,5)$   &      $L[-k,-4,k-1,0,-2]$& 	$-4$&$-2$&$-1$ & \S \\
			$L(12,5)$   &      $L[-k,-4,k-1,-2,-4]$& 	$-4$&$-4$&$1$ \\
			\midrule
			$L(6,1)$   &      $L[-k,-3,k-1,0,-1]$&		$-3$&$-1$&$-1$ & \S \\
			$L(6,1)$   &      $L[-k,-3,k-1,-2,-3]$& 	$-3$&$-3$&$1$ \\	
			$L(10,3)$   &      $L[-k,-5,k-1,0,-1]$&		$-5$&$-1$&$-1$ & \S\\
			$L(10,3)$   &      $L[-k,-5,k-1,-2,-3]$& 	$-5$&$-3$&$1$ \\
			\midrule		
			$L(5,1)$   &      $L[-k,-5,k-1,0,0]$&		$-5$&$0$&$-1$  & \S\\
			$L(5,1)$   &      $L[-k,-5,k-1,-2,-2]$& 	$-5$&$-2$&$1$ & \S\\
			$L(7,3)$   &      $L[-k,-7,k-1,0,0]$&		$-7$&$0$&$-1$ & \S\\
			$L(7,3)$   &      $L[-k,-7,k-1,-2,-2]$& 	$-7$&$-2$&$1$ & \S\\
			\bottomrule
		\end{tabular}
	
\end{table}

\begin{proof}
Continue from the proof of Theorem~\ref{thm:bulk2}.
Since $s + \frac{1}{\pm2} = (s \pm 1) + \frac{1}{\mp2}$, it follows from the surgery description in Figure~\ref{fig:Ym0sb} that $Y(m,0,s,\pm2) = Y(m,0,s\pm1,\mp2)$. Thus for each $(m,s\pm1) \in \{(-1,-6),(-2,-4),(-3,-3)\}$ we obtain a fibered LJP knot in a lens space for every $k \in \Z$ by setting $b = \mp2$.  This gives $6$ more families shown in Table~\ref{table:cabledgofk2}(Top).
	
Furthermore, taking $|b|=1$ and $|n|=1$ we may also choose integers $m$ and $s$ so that $(s+1/b,m) \in \{ (-1,-6\pm1), (-2,-4\pm1), (-3,-3\pm1)\}$.  Hence this gives $12$ more families of fibered LJP knots in lens spaces. These are shown in Table~\ref{table:cabledgofk2}(Bottom).

Lemma~\ref{lem:strinv} shows that half of the families of Table~\ref{table:cabledgofk2} necessarily produce strongly invertible knots.  These are indicated with the symbol $\S$.
\end{proof}

\begin{remark}\label{rem:bulk2extension}
\begin{enumerate}

\item 
Take $b = \infty$ so that $s+1/b = s$, and  take $(m,s) \in \{(-1,-6+1/c), (-2,-4+1/c), (-3,-3+1/c)\}$ for $c \in \Z$.  Then the framed knot $K_k(m,0,s,\infty)$ is a fibered MJP knot in the lens space $Y(m,0,s,\infty)$ and framed surgery produces the connected sum of lens space $Y^*_k(m,0,s,\infty)$.

\item Taking $s=b=\infty$, then for any $k,m \in \Z$, both $Y(m,0,\infty,\infty)$ and $Y^*_k(m,0,\infty,\infty)$ are lens spaces.  In this situation 
$C'_0$ becomes a genus one fibered knot where the twice-punctured torus $\Sigma$ caps off to a fiber.  Thus the knots $K_k(m,0,\infty,\infty)$ are doubly primitive knots.  
\end{enumerate}
\end{remark}


\appendix

\section{Early examples}
\label{sec:MMM_twice_drilled}

\begin{figure}
	\includegraphics[width=\textwidth]{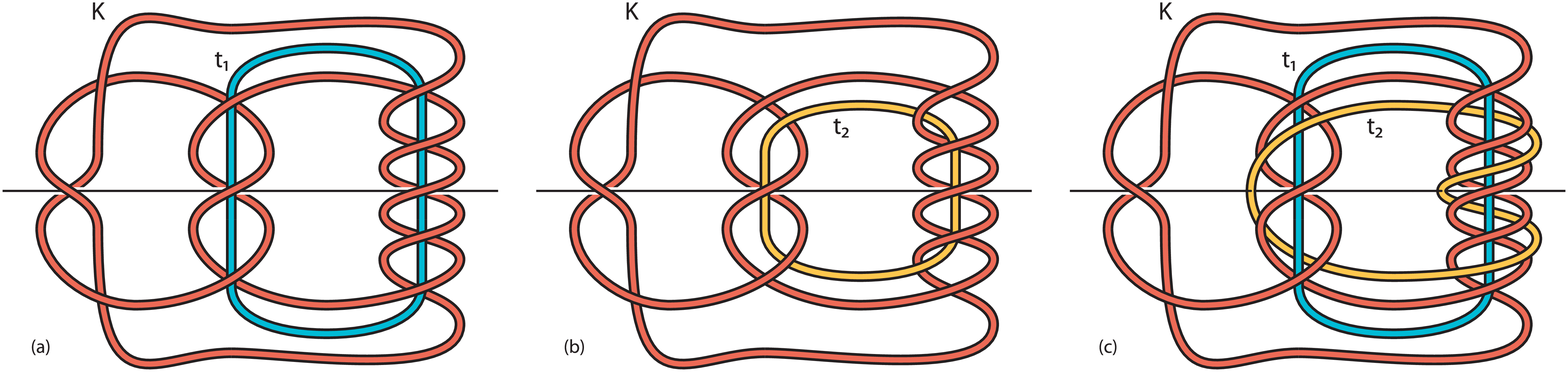}
	\caption{In (a) and (b) are copies of the $(-3,3,5)$ pretzel knot $K$ linked with unknots $t_1$ and $t_2$ respectively. See \cite[Figures 1 \& 9]{mattman2006seifert}. Performing $-1/n$--surgeries on the unknots $t_1$ and $t_2$ produces the two families of non-strongly invertible genus one knots $K_n$ and $K'_n$ of Mattman, Miyazaki, and Motegi for which $+1$--surgery yields a small Seifert fiber space surgery.    Observe that with respect to the genus $1$ Heegaard tori defined by $t_1$ and $t_2$, these knots are all $(1,2)$--knots. In (c) we combine (a) and (b) so that $t_1 \cup t_2$ is a $(2,6)$--torus link. One then finds that $+1$--surgery on $K$ in the cable space exterior of $t_1 \cup t_2$ is again a cable space.
	}
	\label{fig:MMM-knots}
\end{figure}

We observe that ``natural'' examples of one-cusped hyperbolic manifolds with tunnel number $2$ and two lens space fillings have been within reach for several years.

Preceding \cite{DMM}, Mattman-Miyazaki-Motegi describe the first examples of non-strongly invertible hyperbolic knots in $S^3$ for which some surgery produces a small Seifert fibered space \cite{mattman2006seifert}.  Their two families of knots $K_n$ and $K_n'$ giving these examples are obtained through twisting the $(-3,3,5)$ pretzel knot $K$ with its $+1$--surgery by $-1/n$--surgery on one of two unknots $t_1$ and $t_2$, detailed in Figure~\ref{fig:MMM-knots}(a) and (b).   Later, Gainullin observes that $15/4$--surgery on $t_1$ (the slope of a regular fiber about $t_1$ after the Seifert fibered $+1$--surgery on $K$) instead produced a hyperbolic knot in a lens space with a reducible surgery that did not fit previously known constructions \cite{gainullin}.  

Noting that the slopes $(15j+4)/(4j+1)$ have distance $1$ from $15/4$ for any $j \in \Z$, $(15j+4)/(4j+1)$--surgery on $t_1$ sends $K$ to a knot in a lens space with a lens space surgeries.   Appealing to the computations in \cite{mattman2006seifert}, the symmetry group of the hyperbolic complement of $K \cup t_1$ is not strongly invertible on the cusp of $K$. Therefore it follows from Thurston's Hyperbolic Dehn Surgery Theorem and Theorem~\ref{thm:fillinglengths}
 that for all but finitely many $j \in \Z$, the result of the $(15j+4)/(4j+1)$--filling on the $t_1$ cusp of the complement of $K \cup t_1$ is a one-cusped hyperbolic manifold that is not strongly invertible.  Therefore it has tunnel number at least $2$.  Moreover it has two lens space fillings, and so neither core of these lens space fillings can be a $(1,1)$--knot. 
In fact, as one observes from Figure~\ref{fig:MMM-knots}(a) that the image of $K$ is at worst a $(1,2)$--knot in any lens space obtained by surgery on $t_1$, it must be a $(1,2)$--knot (with tunnel number exactly $2$) in this situation.  A similar discussion applies for the images of $K$ upon $(12j+5)/(5j+2)$--surgery on $t_2$.

\bigskip
	
Figure \ref{fig:MMM-knots}(c) unifies (a) and (b) into a three-component hyperbolic link $K \cup t_1 \cup t_2$. (Snappy identifies the complement of this link as the census manifold $o9_{44012}$, which is also the complement of the exterior of the link $L11n366$.)   The $1/0$ and $+1$--fillings on the cusp of $K$ each produce cable spaces homeomorphic to $A(3/1)$, but with opposite orientations.  As $M(-1,-2) = A(3/1)$ and Lemma~\ref{lem:cables} shows that $M_{-2}^*(-1,-2) = A(3/2)$, one checks and confirms that the complement of $K\cup t_1 \cup t_2$ is also our manifold $M_{-2}(-1,-2)$.

One identifies the family of surgeries between lens space induced by $(15j+4)/(4j+1)$--surgery on $t_1$ in Figure~\ref{fig:MMM-knots}(a) as the family $K_{-2}(-1,-2,-5/2,j)$.  
Similarly, the family of surgeries between lens space induced by $(12j+5)/(5j+2)$--surgery on $t_2$ in Figure~\ref{fig:MMM-knots}(b) may be identified as the family $K_{-2}(-1,-2,-4,j)$.

\section{LJP knots and the Magic Manifold}\label{sec:magic}

This appendix contains the data for vastly many more exceptional fillings of $K_k(m, r, s, b)$ than described in the main body of the paper.  Indeed, as one may observe, the exterior of the sublink $C_0' \cup C_1' \cup R \subset \calL$ in Figure~\ref{fig:surgerylink} is the homeomorphic to the ``Magic Manifold'' $N$ of Martelli and Petronio \cite{MP}.   In particular, our manifolds $Y(-1,r,s,b)$ are fillings of $N$: that is,  $Y(-1,r, s, b)=N(\alpha,\beta,\gamma)$, where $(\alpha,\beta,\gamma)$ is some permutation of $(r,1+s,1+1/b)$.

Table~\ref{table:mmfillings} identifies the fillings of $N$ which are lens spaces or connected sums of lens spaces according to \cite{MP}.  (Note that these lens spaces and lens space summands are only given up to homeomorphism.) 

Table~\ref{table:pairsoffillings} specifies the various choices of $(r,s,b)$  for which of the manifolds of Table~\ref{table:mmfillings} can be realized as $Y(-1,r, s, b)$ subject to the restriction that $b \in \Z$ or $b=\infty$ so that $Y^*_k(-1,r,s,b)$ will be a lens space or a connected sum of lens spaces. (Only $N(-\frac{3}{2}, -\frac{5}{2},-2)$ and $N(-\frac{3}{2}, -\frac{5}{2},-1)$ cannot be realized in this manner.) The resulting manifolds $Y^*_k(-1,r,s,b)$ are also given (as determined by the Kirby calculus of Figure~\ref{fig:firstsurgery}).
 
Tables~\ref{table:lenslens}, \ref{table:lensred}, \ref{table:redlens}, and \ref{table:redred} then separate the information of Table~\ref{table:pairsoffillings} into according to whether $Y(-1,r,s,b)$ and $Y^*_k(-1,r,s,b)$ are lens spaces or connected sums of lens spaces as shown in the chart below.  In Table~\ref{table:lenslens}, the two sets of parameters that produce the knots of Theorem~\ref{thm:bulk} are noted.

\begin{center}
\begin{tabular}{@{}lll@{}}
\toprule
 $Y(-1,r,s,b)$ & $Y^*_k(-1,r,s,b)$ &  Table\\
\midrule
Lens & Lens & \ref{table:lenslens} \\
Lens & Lens \# Lens &\ref{table:redlens} \\
Lens \# Lens & Lens & \ref{table:lensred} \\
Lens \# Lens & Lens \# Lens & \ref{table:redred} \\
\bottomrule
\end{tabular}
\end{center}

\begin{table}
\centering
\caption{The lens space and reducible fillings of the magic manifold $N$ except when one filling coefficient is $\infty$; obtained from \cite[Theorem~1.3 and Tables 2,3,4]{MP}.  Here, $n,m,t,u\in\Z$ and $t,u$ are coprime. Note that the lens spaces and lens space summands were  determined there only up to homeomorphism, see \cite[Section 1.1, Lens spaces]{MP}.  We identified  $N(-1,-3+\tfrac1n,\tfrac{t}{u})$ as being listed with the opposite orientation, and have corrected it below. However there may still be others listed with the opposite orientation.
 }
\label{table:mmfillings}
\begin{tabular}{@{}ll@{}}
\toprule
filling of magic manifold &  resulting manifold\\
\midrule
$N(-3,-1,\tfrac{t}{u})$  &  $L(2,1) \# L(t+3u, u)$ \\
$N(-3,-2,\tfrac{t}{u}) $ & $L(5t+7u, 2t+3u)$\\
$N(-3,-1 + \tfrac{1}{n}, -1 + \tfrac{1}{m})$ & $ L((2n+1)(2m+1)-4, (2n+1)m-2)$\\
$N(-2, -2, \tfrac{t}{u})$ & $ L(3, 1)\#L(t+2u, u)$\\
$N(-2, -2 + \tfrac{1}{n}, \tfrac{t}{u}) $ & $ L(3n(t+2u)-2t-u, n(t+2u)-t-u)$\\
$N(-1, -3 + \tfrac{1}{n}, \tfrac{t}{u}) $ & $L(2n(t+3u)-t-u, n(t+3u)-t-2u)$\\
$N(0, n, -4 - n) $ & $ L(2,1) \# L(3,1)$\\
$N(0, n, -4 - n + \tfrac{1}{m}) $ & $ L(6m-1, 2m-1)$\\
$N( -\tfrac{3}{2}, -\tfrac{5}{2}, -2) $ & $ L(2,1)$\\
$N( -\tfrac{3}{2}, -\tfrac{5}{2},-1)$ & $ L(13, 5)$\\
$N(-4, -\tfrac{1}{2}, -1) $ & $ L(11, 3)$\\
$N(-4, -\tfrac{1}{2}, 0) $ & $ L(13,5)$\\
\bottomrule
\end{tabular}
\end{table}

\begin{table*}
\centering
\caption{The lens space and reducible fillings of the magic manifold $N$ that may be obtained as $Y(-1,r,s,b)$.  The underlined terms correspond to those filled with $1+1/b$ for some $b \in \hatZ$.  The corresponding possibilities of parameters $(r,s,b)$ and the filled manifolds  $Y(-1,r,s,b)$ and $Y^*_k(-1,r,s,b)$ are given.  Here $n\in\Z$, and $b\in\hatZ$. (The parameter $m$ has been replaced with $n$ since now only one appears at a time.) Where marked with \S, the knots $K_k(-1,r,s,b)$ are strongly invertible by Lemma~\ref{lem:strinv}.}
\label{table:pairsoffillings}
\begin{tabular}{@{}llllr@{}}
\toprule
magic manifold filling & $(r,s,b)$ &  $Y(-1,r,s,b)$ & $Y^*_k(-1,r,s,b)$ \\
\otoprule
$N(-3,-1,\underline{1+\tfrac{1}{b}}) $& $(-3,-2,b)$ &  $ L[2] \# L[4,-b]$ & $L[-k-2,k,-b-1, -2]$   &\S\\ 
                                      &$(-1,-4,b)$&  $ L[2] \# L[4,-b]$ &  $L[-k,k,-b-1, -4]$  \\  \midrule
$N(-3,-2,\underline{1+\tfrac{1}{b}}) $& $(-3,-3,b)$  & $L[2, -2, 2, -b]$ & $L[-k-2,k,-b-1, -3]$\\
                                      &$(-2,-4, b)$ & $L[2, -2, 2, -b]$ & $L[-k-1,k,-b-1, -4]$ \\  \midrule
$N(-3,-1 + \tfrac{1}{n}, \underline{0})$ &$(-3, -2+\tfrac{1}{n},-1)$ & $ L[3, 1-n, 2]$ &$L[-k-2,k-2,-n]$ &\S\\
                                      &$(-1+\tfrac1n,-4, -1)$ & $ L[3, 1-n, 2]$ &  $L[-n,-k,k-4]$&\S\\  \midrule
$N(-2, -2, \underline{1+\tfrac{1}{b}})$ &$(-2,-3,b)$ & $L[3]\#L[3,-b]$& $L[-k-1,k,-b-1, -3]$\\
                                      & $(-2,-3, b)$& $L[3]\#L[3,-b]$& $L[-k-1,k,-b-1, -3]$\\  \midrule
$N(-2, -2 + \tfrac{1}{n}, \underline{1+\tfrac{1}{b}}) $&$(-2,-3+\tfrac{1}{n},b)$ & $L[3,1-n,3,-b]$ &  $L[-k-1,k,-b-1, -3,-n]$ \\
                                      &$(-2+\tfrac1n,-3,b)$& $L[3,1-n,3,-b]$ &$L[-n, -k-1, k,-b-1, -3]$\\  \midrule

$N(-1, -3 + \tfrac{1}{n}, \underline{1+\tfrac{1}{b}}) $ &$(-1, -4+\tfrac{1}{n},b)$ & $L[2,n,-4,b]$ & $L[-k,k,-b-1, -4,-n]$\\
                                      &$(-3+\tfrac1n,-2,b)$ & $L[2,n,-4,b]$ &$L[-n,-k-2,k,-b-1, -2]$ &\S\\  \midrule
$N(\underline{0}, n, -4 - n) $  &$(n, -5-n,-1)$ & $L[2] \# L[3]$ &$L[-k+n+1,k-n-5]$ &\S\\
                                      &$(-4-n, n-1, -1)$ & $L[2] \# L[3]$ &  $L[-k-n-3,k -1]$ &\S \\  \midrule
                                      
$N(0, \underline{0}, -4) $ &$(0,-5,-1)$ & $L[2] \# L[3]$ & $L[-k+1,k -1]$ &\S \\
                                     &$(-4,-1,-1)$ & $L[2] \# L[3]$ & $L[-k-3,k-1]$&\S \\  \midrule
                                     
$N(0, \underline{1}, -3) $&$(0,-4,\infty)$ & $L[2] \# L[3]$ &  $L[-k+1,k]\#L[-4]$&\S\\
                                    & $(-3,-1,\infty)$ & $L[2] \# L[3]$ &$L[-k-2,k]$&\S \\  \midrule
                                    
$N(0, \underline{2}, -2) $&$(0,-3,1)$ & $L[2] \# L[3]$ &$L[-k+1,k,-2, -3]$\\
                                    &$(-2,-1,1)$ & $L[2] \# L[3]$ &$L[-k-1,k+1]$ &\S\\  \midrule
                                    
$N(0, \underline{0}, -4 + \tfrac{1}{n}) $ &$(0,-5+\tfrac{1}{n},-1)$ & $L[3,1-n, 2]$ &$L[-k+1,k-5,-n]$ &\S\\
                                   &$(-4+\tfrac1n,-1,-1)$ & $L[3,1-n, 2]$ &$L[-n,-k-3,k-1]$&\S \\  \midrule

$N(0,\underline{1}, -3 + \tfrac{1}{n}) $ &$(0,-4+\tfrac{1}{n},\infty)$ & $L[3,1-n, 2]$ & $L[-k+1,k]\#L[-4,-n]$&\S\\
                                     &$(-3+\tfrac1n,-1,\infty)$ & $L[3,1-n, 2]$ &$L[-n,-k-2,k]$ &\S\\  \midrule
                                     
$N(0, \underline{2}, -2 + \tfrac{1}{n}) $ &$(0,-3+\tfrac{1}{n},1)$ & $L[3,1-n, 2]$ &$L[-k+1,k,-2,-3,-n]$ \\
                                    &$(-2+\tfrac1n,-1,1)$  & $L[3,1-n, 2]$ &$L[-n,-k-1,k+1]$ &\S\\  \midrule
                                    
$N(0, -4, \underline{\tfrac{1}{2}}) $  &$(0,-5,-2)$ & $L[3,-1,2]$ &$L[-k+1,k,1,-5 ]$ \\
                            		&$(-4,-1,-2)$ & $L[3,-1,2]$ & $L[-k-3,k,2 ]$&\S\\  \midrule

$N(0, -5,  \underline{1+\tfrac{1}{b}}) $  &$(0,-6,b)$ & $L[3,1-b, 2]$ &$L[-k+1,k,-b-1, -6]$ \\
                                          &$(-5,-1,b)$ & $L[3,1-b, 2]$ & $L[-k-4,k,-b]$&\S\\  \midrule

$N(0, -6, \underline{\tfrac{3}{2}}) $  &$(0,-7,2)$ & $L[3,3, 2]$ &$L[-k+1,k,-3,-7]$ \\
     				                   &$(-6,-1,2)$ & $L[3,3, 2]$ & $L[-k-5,k,-2 ]$&\S\\  \midrule
$N(-4, -\tfrac12, \underline{0}) $  &$(-4, -3/2,-1)$ & $L[3,3,2]$ &$L[-k-3,k-2,-2]$&\S\\
                                    &$(-\tfrac12,-5,-1)$ & $L[3,3,2]$ & $L[2 ,-k+1,k-1]$&\S\\
\bottomrule
\end{tabular}
\end{table*}

\begin{remark}\label{rem:MMfillingtolensfilling}
Every lens space $Y(-1,r,s,b)$ appearing in Table~\ref{table:pairsoffillings} may be described with one of the two following forms  for some $x,y \in \Z$: 
\begin{align*}
L[3,x,3,y] &= L(3x(1-3y)+6y-1,x(1-3y)+y) \\
L[2,x,4,y]&= L(2x(1-4y)+6y-1, x(1-4y)+y)
\end{align*}
Consequently, we see that the some of the knots $K_k(m,0,s,b)$ of Table~\ref{table:cabledgofk} with $m\neq -1$ are not obtained in this appendix.   For example, observe from Table~\ref{table:cabledgofk} that $Y(-2,0,-4,-1) = L(6,1)$ and $Y^*_k(-2,0,-4,-1) = L(14 k^2 - 6 k + 3, -14 k -1)$.  Since the order of  $L[3,x,3,y]$ mod $3$ and the order of $L[2,x,4,y]$ mod $2$ are both $-1$, neither can be $Y(-2,0,-4,-1)$. Similarly choosing $k \equiv 0 \mod 3$, $Y^*_k(-2,0,-4,-1)$ cannot be $L[3,x,3,y]$.  Then, as one may check,  there are no integers $x,y$ such that either lens space $Y^*_3(-2,0,-4,-1) = L(111,68)$ or $Y^*_6(-2,0,-4,-1)=L(471,386)$ is homeomorphic to the lens space $L[2,x,4,y]$. 
\end{remark}

\begin{table*}
\centering
\caption{ With the parameters $(r,s,b)$ as given, the filled manifolds $Y(-1,r,s,b)$ and $Y^*_k(-1,r,s,b)$ are the lens spaces indicated for any $k,n,b \in \Z$.
 In the 5th and 7th rows, the asterisk on the parameters indicates that they correspond to the surgeries of Theorem~\ref{thm:bulk}. For generic $k,n,b \in \Z$, the knots $K_k(-1,r,s,b)$ are hyperbolic. Where marked with \S, the knots $K_k(-1,r,s,b)$ are strongly invertible by Lemma~\ref{lem:strinv}.}
\label{table:lenslens}
\begin{tabular}{@{}llllr@{}}
\toprule
$(r,s,b)$ & $Y(-1,r,s,b)$ & $Y^*_k(-1,r,s,b)$  \\
\midrule
 $(-3,-3,b)$  & $L[2, -2, 2, -b]$ & $L[-k-2,k,-b-1, -3]$ &   \\
 $(-2,-4, b)$ & $L[2, -2, 2, -b]$ & $L[-k-1,k,-b-1, -4]$ &   \\
 
$(-3, -2+\tfrac{1}{n},-1)$ & $ L[3, 1-n, 2]$ &$L[-k-2,k-2,-n]$ & \S \\
$(-1+\tfrac1n,-4, -1)$ & $ L[3, 1-n, 2]$ &  $L[-n,-k,k-4]$ & \S\\

$(-2,-3+\tfrac{1}{n},b)^*$ & $L[3,1-n,3,-b]$ &  $L[-k-1,k,-b-1, -3,-n]$ &  \\
$(-2+\tfrac1n,-3,b)$& $L[3,1-n,3,-b]$ & $L[-n, -k-1, k,-b-1, -3]$&  \\

$(-1, -4+\tfrac{1}{n},b)^*$ & $L[2,n,-4,b]$ & $L[-k,k,-b-1, -4,-n]$&  \\
$(-3+\tfrac1n,-2,b)$ & $L[2,n,-4,b]$ & $L[-n,-k-2,k,-b-1, -2]$& \S\\

$(0,-5+\tfrac{1}{n},-1)$ & $L[3,1-n, 2]$ & $L[-k+1,k-5,-n]$ & \S\\
$(-4+\tfrac1n,-1,-1)$ & $L[3,1-n, 2]$ & $L[-n,-k-3,k-1]$ & \S\\

$(-3+\tfrac1n,-1,\infty)$ & $L[3,1-n, 2]$ & $L[-n,-k-2,k]$& \S\\

$(0,-3+\tfrac{1}{n},1)$ & $L[3,1-n, 2]$ & $L[-k+1,k,-2,-3,-n]$& \\
$(-2+\tfrac1n,-1,1)$  & $L[3,1-n, 2]$ & $L[-n, ,-k-1,k+1]$ & \S \\

$(0,-5,-2)$ & $L[3,-1,2]$ &$L[-k+1,k,1,-5 ]$ &  \\
$(-4,-1,-2)$ & $L[3,-1,2]$ & $L[-k-3,k,2 ]$ &  \S \\  

$(0,-6,b)$ & $L[3,1-b, 2]$ & $L[-k+1,k,-b-1, -6]$ &  \\
$(-5,-1,b)$ & $L[3,1-b, 2]$ & $L[-k-4,k,-b]$& \S\\

$(0,-7,2)$ & $L[3,3, 2]$ &$L[-k+1,k,-3,-7]$ &  \\
$(-6,-1,2)$ & $L[3,3, 2]$ & $L[-k-5,k,-2 ]$ &  \S \\

$(-4, -3/2,-1)$ & $L[3,3,2]$ & $L[-k-3,k-2,-2]$& \S\\
$(-\tfrac12,-5,-1)$ & $L[3,3,2]$ & $L[2 ,-k+1,k-1]$& \S\\
\bottomrule
\end{tabular}
\end{table*}

\begin{table*}
\centering
\caption{
	 With the parameters $(r,s,b)$ as given, the filled manifolds $Y(-1,r,s,\infty)$ and $Y^*_k(-1,r,s,\infty)$ are the lens spaces and connected sums of lens spaces indicated respectively  for any $k,n \in \Z$.  
	 For generic $k,n \in \Z$, the knots $K_k(-1,r,s,\infty)$ are hyperbolic and strongly invertible. The strong invertibility (still marked with \S) is given by Lemma~\ref{lem:strinv}. }
\label{table:lensred}
\begin{tabular}{@{}llllr@{}}
\toprule
$(r,s,b)$ & $Y(-1,r,s,b)$ & $Y^*_k(-1,r,s,b)$  \\
\midrule
 $(-3,-3,\infty)$  & $L[2, -2, 2]$ & $L[-k-2,k]\#L[-3]$& \S\\
 $(-2,-4, \infty)$ & $L[2, -2, 2]$ & $L[-k-1,k]\#L[-4]$& \S\\
$(-2,-3+\tfrac{1}{n},\infty)$ & $L[3,1-n,3]$ &  $L[-k-1,k]\#L[ -3,-n]$ & \S\\
$(-2+\tfrac1n,-3,\infty)$& $L[3,1-n,3]$ &$L[-n, -k-1, k]\#L[ -3]$& \S\\
$(-1, -4+\tfrac{1}{n},\infty)$ & $L[2,n,-4]$ & $L[-k,k]\#L[ -4,-n]$& \S\\
$(-3+\tfrac1n,-2,\infty)$ & $L[2,n,-4]$ &$L[-n,-k-2,k]\#L[ -2]$ & \S\\
$(0,-4+\tfrac{1}{n},\infty)$ & $L[3,1-n, 2]$ & $L[-k+1,k]\#L[-4,-n]$& \S\\
\bottomrule
\end{tabular}
\end{table*}

\begin{table*}
\centering
\caption{ 
	 With the parameters $(r,s,b)$ as given, the filled manifolds $Y(-1,r,s,\infty)$ and $Y^*_k(-1,r,s,\infty)$ are the connected sums of lens spaces and  lens spaces indicated respectively  for any $k,n,b \in \Z$. 
 For generic $k,n,b \in \Z$, the knots $K_k(-1,r,s,b)$ are hyperbolic.  Where marked with \S, the knots $K_k(-1,r,s,b)$ are strongly invertible by Lemma~\ref{lem:strinv}.}
\label{table:redlens}
\begin{tabular}{@{}llllr@{}}
\toprule
$(r,s,b)$ & $Y(-1,r,s,b)$ & $Y^*_k(-1,r,s,b)$ \\
\midrule
$(-3,-2,b)$ &  $ L[2] \# L[4,-b]$ & $L[-k-2,k,-b-1, -2]$ & \S\\
$(-1,-4,b)$&  $ L[2] \# L[4,-b]$ &  $L[-k,k,-b-1, -4]$  & \\
 $(-2,-3,b)$ & $L[3]\#L[3,-b]$& $L[-k-1,k,-b-1, -3]$  &  \\
$(n, -5-n,-1)$ & $L[2] \# L[3]$ &$L[-k+n+1,k-n-5]$ & \S \\
$(-4-n,n-1,-1)$ & $L[2] \# L[3]$ &  $L[-k-n-3,k -1]$ & \S \\
$(0,-5,-1)$ & $L[2] \# L[3]$ & $L[-k+1,k -1]$ & \S\\
 $(-4,-1,-1)$ & $L[2] \# L[3]$ & $L[-k-3,k-1]$  & \S \\
 $(-3,-1,\infty)$ & $L[2] \# L[3]$ &$L[-k-2,k]$  & \S \\
$(0,-3,1)$ & $L[2] \# L[3]$ &$L[-k+1,k,-2, -3]$  &  \\
$(-2,-1,1)$ & $L[2] \# L[3]$ &$L[-k-1,k+1]$  & \S \\
$(-5,-1,\infty)$ & $L[3]\#L[ 2]$ & $L[-k-4,k]$  & \S \\
\bottomrule
\end{tabular}
\end{table*}

\begin{table*}
\centering
\caption{
	 With the parameters $(r,s,b)$ as given, the filled manifolds $Y(-1,r,s,\infty)$ and $Y^*_k(-1,r,s,\infty)$ are  connected sums of lens spaces  for any $k \in \Z$.
	 For generic $k \in \Z$, the knots $K_k(-1,r,s,\infty)$ are hyperbolic and strongly invertible. The strong invertibility (still marked with \S) is given by Lemma~\ref{lem:strinv}. }
\label{table:redred}
\begin{tabular}{@{}llllr@{}}
\toprule
$(r,s,b)$ & $Y(-1,r,s,b)$ & $Y^*_k(-1,r,s,b)$  \\
\midrule
$(-3,-2,\infty)$ &  $ L[2] \# L[4]$ & $L[-k-2,k] \# L[ -2]$   & \S\\
$(-1,-4,\infty)$&  $ L[2] \# L[4]$ &  $L[-k,k] \# L[-4]$   & \S\\
$(0,-4,\infty)$ & $L[2] \# L[3]$ &  $L[-k+1,k]\#L[-4]$  & \S\\
 $(-2,-3,\infty)$ & $L[3]\#L[3]$& $L[-k-1,k] \# L[-3]$  & \S\\
$(0,-6,\infty)$ & $L[3]\#L[ 2]$ &$L[-k+1,k]\#L[ -6]$  & \S\\
\bottomrule
\end{tabular}
\end{table*}

\begin{footnotesize}
	\bibliographystyle{alpha}
	\bibliography{BHL-JP-bib}
\end{footnotesize}

\end{document}